\newtheorem{theorem}{Theorem}[section]
\newtheorem{corollary}[theorem]{Corollary}
\newtheorem{definition}[theorem]{Definition}
\newtheorem{example}[theorem]{Example}
\newtheorem{lemma}[theorem]{Lemma}
\newtheorem{problem}{Problem}
\newtheorem{proposition}[theorem]{Proposition}
\newtheorem{remark}[theorem]{Remark}
\begin{document}

\title[The Graded Algebras with a Graded Identity of Degree 2]{The Graded Algebras with a Graded Identity of Degree 2}
\keywords{$\mathsf{G}$-graded algebra, simple graded algebra, graded ring, nilpotent Lie, solvable Lie, neutral component, polynomial identity of degree 2, commutator ideal}
\subjclass[2020]{Primary 16R10; Secondary 16W50, 16W22, 16W10, 17B30}
\date{June 13, 2025}

\author[De Fran\c{c}a]{Antonio de Fran\c{c}a$^\dag$}
\address{Department of Mathematics, Federal Univ. of Campina Grande, 58429-970 Campina Grande -- PB, Brazil}
\email{\href{mailto: a.defranca@yandex.com}{a.defranca@yandex.com}} 
\urladdr{\href{https://sites.google.com/view/adefranca}{https://sites.google.com/view/adefranca}}
\thanks{$^\dag$The author was partially supported by Para\'iba State Research Foundation (FAPESQ), Grant \#2023/2158.}

\begin{abstract}
This paper is devoted to the study of graded associative algebras that satisfy a graded polynomial identity of degree $2$. 
Let $\mathsf{G}$ be a finite abelian group, $\mathbb{F}$ a field of characteristic zero and $\mathfrak{A}$ a $\mathsf{G}$-graded $\mathbb{F}$-algebra. 
We prove that, for $\mathbb{F}$ algebraically closed, if $\mathfrak{A}_e$ satisfies a polynomial identity $g=g(x_1^{(e)}, \dots, x_n^{(e)})\in\mathbb{F}\langle X^\mathsf{G} \rangle$ of degree $2$, then $\mathfrak{A}$ is either nilpotent or has commutative neutral component, 
and we ensure that the $\mathsf{G}$-graded variety $\mathfrak{W}^\mathsf{G}$ determined by $g$ is equal to either $\mathsf{var}^\mathsf{G}([x^{(e)},y^{(e)}])$ or $\mathsf{var}^\mathsf{G}(N)$ for some nilpotent $\mathsf{G}$-graded algebra $N$. 
Posteriorly, we investigate the implications of $\mathfrak{A}_e$ being central in $\mathfrak{A}$. The results obtained allow us to prove that, when $\mathsf{G}$ is finite cyclic, if $\mathfrak{A}$ is finitely generated and $\mathfrak{A}_e$ is central in $\mathfrak{A}$, then the commutator ideal of $\mathfrak{A}$ is nilpotent, and the algebra $\mathfrak{A}^{(-)}=(\mathfrak{A},[\ ,\ ])$ is a solvable Lie algebra, 
and, if $\mathsf{G}$ has odd order, then $[x_1,x_2][x_3,x_4]\cdots[x_{2d-1},x_{2d}]\equiv0$ in $\mathfrak{A}$, for some $d\in\mathbb{N}$.
\end{abstract}


\maketitle


\section{Introduction}

Algebraic structures with gradings by groups ensure a very rich field of research, in the theory ring, because from a structure of grading we can deduce properties of the ordinary object (i.e. object without grading). From this, one of central problems in the study of graded algebras is to obtain non-graded (ordinary) properties from the analysis of gradings of a  given algebra and vice versa. 
In \cite{BergCohe86}, J. Bergen and M. Cohen (1986) showed that, given a $\mathsf{G}$-graded algebra $\mathfrak{A}=\bigoplus_{\xi\in \mathsf{G}} \mathfrak{A}_\xi$, where $\mathsf{G}$ is a finite group with neutral element $e$, if $\mathfrak{A}_e$ is  a $PI$-algebra, then $\mathfrak{A}$ is also a $PI$-algebra. Already in \cite{BahtGiamRile98}, Yu. Bahturin, A. Giambruno and D. Riley (1998) deduced, in addition to this last result, a bound for the minimal degree of the polynomial identity satisfied by $\mathfrak{A}$.

In \cite{AmitLevi50}, S. Amitsur and J. Levitzki (1950) presented \textit{minimal identities} for matrix algebra of degree $n$, where a minimal identity of an algebra $\mathfrak{A}$ is a (nonzero) identity polynomial of degree minimal which $\mathfrak{A}$ satisfies. Obviously, given a $PI$-algebra $\mathfrak{A}\neq\{0\}$, a minimal identity for $\mathfrak{A}$ has degree at least $2$. 
In \cite{ShesZhuk09}, I. Shestakov and N. Zhukavets (2009) proved an analogue for octonion algebras of the famous Amitsur-Levitsky skew-symmetric identity: $\mathbb{O}$ satisfies the (minimal) identity polynomial $\sum_{\sigma} (-1)^{\sigma} (x_{\sigma(1)}x_{\sigma(2)})(x_{\sigma(3)}, x_{\sigma(4)}, x_{\sigma(5)})\equiv0$, which is a skew-symmetric identity of degree $5$, where $(a,b,c)=(a,b)c-a(b,c)$ is the associator. Recall that an \textit{octonion algebra} $\mathbb{O}$ is a composition algebra (i.e. $\mathbb{O}$ has a nondegenerate quadratic form $N$ satisfying $N(ab)=N(a)N(b)$ for any $a,b\in \mathbb{O}$) over a field $\mathbb{F}$ that has dimension $8$ over $\mathbb{F}$. See also the work \cite{ShesZhuk04}, 2004, due to I. Shestakov and N. Zhukavets.  

In 2014, I. Shestakov proposed to I. Sviridova to study the following problem: ``\textit{What is the minimal degree of non-graded identity that $\mathfrak{A}$ satisfies? Which identities does $\mathfrak{A}$ satisfies?}''. In this context, I. Sviridova and O. Finogenova studied this problem and proved, among other results, that ``\textit{if $\mathfrak{A}$ is a $\mathbb{Z}_2$-graded associative algebra such that $\mathfrak{A}_0$ satisfies a homogeneous identity of the $2$nd degree, then $\mathfrak{A}$ satisfies a non-graded identity of the degree $4$ or $5$}'' (results not yet published). Other result proved by I. Sviridova and O. Finogenova is the following: ``\textit{if $\mathsf{char}(\mathbb{F})\neq2$, then $\mathsf{var}^{\mathbb{Z}_2}([x^{(0)},y^{(0)}])=\mathsf{var}^{\mathbb{Z}_2}(M_{1,1}(\mathbb{F})\oplus M_{1,1}(\mathsf{E})$}'', where $\mathsf{E}$ is the Grassmann algebra.  
In \cite{Mardua01}, A. de Fran\c{c}a and I. Sviridova (2022) proved that a ring $\mathfrak{R}$ with a finite $\mathsf{G}$-grading of order $d$ is nilpotent with $\mathsf{nd}(\mathfrak{A})\leq3d$ when its neutral component $\mathfrak{R}_e$ is nil of index $2$ and has characteristic different from $2$.

In this work, our interest is to study the following question:

	\vspace{2pt}
	\noindent{\bf Problem \ref{3.01}.} 
\textit{What can we say about an associative algebra $\mathfrak{A}$ graded by a group $\mathsf{G}$ when its neutral component $\mathfrak{A}_e$ satisfies a polynomial identity $g$ of degree $2$? Which ordinary identities does $\mathfrak{A}$ satisfy?}
	\vspace{-6pt}

Basically, we have studied and answered what are the consequences on a graded algebra $\mathfrak{A}$ when it satisfies some graded polynomial identities of degree $2$. We have also studied the graded variety of these algebras.



Let us now introduce another problem that is also the objective of this work. This problem relates rings admitting certain automorphisms, commutator ideal of a ring and graded rings with central neutral component. 
In \cite{Jaco55}, N. Jacobson (1955) proved that if $\mathfrak{L}$ is a Lie algebra with an automorphism $\sigma$ of prime period $l$ and $\sigma$ and has no fixed points $\neq0$, then $\mathfrak{L}$ is nilpotent. 
Already in \cite{Higm57}, G. Higman (1957) showed that if an associative ring has an automorphism of prime order $p$ which leaves fixed no element except zero, it is nilpotent of class at most $p-1$. 
The analogue of these results for finite groups was proved by J. Thompson in 1959. In \cite{Thom59}, he proved that if $G$ is a finite group with a fixed-point-free automorphism of prime order, then $G$ is nilpotent. 
%
%
%
Afterwards in \cite{Krek67}, V.A. Kreknin showed that any finite-dimensional Lie algebra $\mathfrak{L}$ over an arbitrary field admitting a regular automorphism $\varphi$ is solvable. See also the works \cite{Krek63}, due to V.A. Kreknin (1963), and \cite{KrekKost63}, due to V.A. Kreknin and A.I. Kostrikin (1963), and \cite{Khuk92} due to E.I. Khukhro (1992). 
%

On the other hand, answering whether the commutator ideal is nil, N. Herstein (1962) proved in \cite{Hers62} that if $\mathfrak{R}$ is a ring which admits an automorphism of period $2$ or $3$ all of whose fixed-points are in the center of $\mathfrak{R}$, then the commutator ideal of $\mathfrak{R}$ is a nil ideal. He conjectured that this result is hold in the general case of a ring admitting an automorphism of prime period $p$ all of whose fixed-points lie in the center of the ring.  
So, later, G.M. Bergman and I.M. Isaacs (1973) proved, in \cite{BergIsaa73}, that if $\mathfrak{R}$ is a (free $n$-torsion, unitary) ring graded by a finite cyclic group $\mathsf{G}$ of order $n$ such that $\mathfrak{R}_e$ is central, then the commutator ideal of  $\mathfrak{R}$ is nil. 
Already in \cite{Maka05}, N.Yu. Makarenko (2005) proved that, given a $(\mathbb{Z}/p\mathbb{Z})$-graded Lie ring (algebra) $L = L_0 \oplus L_1 \oplus \cdots \oplus L_{p-1}$ such that $[L_s,L_t]\subseteq L_{s+t(mod\ p)}$, if the component $L_0$ is finite of order $m$, (i.e. a vector space of finite dimension $m$), then $L$ has a nilpotent ideal of nilpotency class bounded by a function of $p$, whose index in the additive group $L$ (codimension) is bounded by a function of $m$ and $p$. 

In the context of Novikov algebras, in \cite{UmirZhel21}, U. Umirbaev and V. Zhelyabin (2021) proved that if $N$ is a $G$-graded Novikov $K$-algebra with solvable $0$-component $N_0$ and the characteristic of the field $K$ does not divide the order of $G$, where $G$ is a finite additive abelian group, then $N$ is solvable. 
Posteriorly, in \cite{TuleUmirZhel23}, K. Tulenbaev, U. Umirbaev and V. Zhelyabin (2023) showed that if $N$ is a Lie-solvable Novikov algebra over a field of characteristic $\neq2$, then the ideal $[N,N]$ is right nilpotent. 
Recall that a nonassociative algebra $N$ over a field $\mathbb{K}$ is called \textit{a Novikov algebra} if it satisfies the identities $(x,y,z)=(y,x,z)$ (left symmetry) and $(xy)z=(xz)y$ (right commutativity), where $(x,y,z)=(xy)z-x(yz)$ is the associator of elements $x,y,z\in N$.

In this way, we have the second central problem of this work:

	\vspace{4pt}
	\noindent{\bf Problem \ref{3.29}.} 
\textit{If $\mathfrak{A}$ is a $\mathsf{G}$-graded algebra such that its neutral component $\mathfrak{A}_e$ is central, then is $\mathfrak{A}^{(-)}$ a solvable/nilpotent Lie algebra? And about the commutator ideal of $\mathfrak{A}$, is it a nilpotent algebra?} 
	\vspace{4pt}

This paper is devoted to the study of \textbf{Problems \ref{3.01} and \ref{3.29}}, and is organized as follows. In \S \ref{preliminar}, we  recall some definitions, define the $\mathsf{f_G}$-commutator of a graded  algebra, give some examples, and prove the first results. Already in \S \ref{main}, the text is dedicated to the main results of the work. These results attempt to answer the Problems \ref{3.01} and \ref{3.29}. One of the results that we have proven is the following:

	\vspace{4pt}
	\noindent{\bf Corollary \ref{3.32}.} 
\textit{Let $\mathsf{G}$ be a finite abelian group, $\mathbb{F}$ an algebraically closed field of characteristic zero, and $\mathfrak{A}$ any associative $\mathsf{G}$-graded algebra. If $\mathfrak{A}_e$ satisfies a polynomial identity of degree $2$, then either $\mathfrak{A}$ is a nilpotent algebra or $\mathfrak{A}_e$ is a commutative algebra.}
	\vspace{4pt}

We notice that the last result is a consequence of Theorem \ref{3.21}, in \S\ref{subsecvariety}, which ensures (under the same conditions of Corollary \ref{3.32}) that the $\mathsf{G}$-graded variety $\mathfrak{W}^\mathsf{G}$ determined by a graded polynomial $g=g(x_1^{(e)}, \dots, x_n^{(e)})\in\mathbb{F}\langle X^\mathsf{G} \rangle$ of degree $2$ is equal to either $\mathsf{var}^\mathsf{G}([x^{(e)},y^{(e)}])$ or $\mathsf{var}^\mathsf{G}(N)$ for some nilpotent $\mathsf{G}$-graded algebra $N$.

We conclude this work with \S\ref{subsecringcentral}, where our main results are stated and proved. The subsection \S\ref{subsecringcentral} begins with Theorem \ref{3.06}, which states that if  $\mathfrak{R}$ is an associative ring graded by a finite cancellative monoid $\mathsf{S}$ of order $d\in\{1,2,3\}$ such that $\mathfrak{R}_e$ is central in $\mathfrak{R}$, then $[x_1, \dots, x_{d+1}]\equiv0$ in $\mathfrak{R}$. Already Proposition \ref{3.28} establishes that this result does not work when $d\geq4$. Below, we show the main result of this subsection, which also answers Problem \ref{3.29}. 

	\vspace{4pt}
	\noindent{\bf Theorem \ref{3.12}.} 
\textit{Let $\mathbb{F}$ be a field of characteristic zero, $\mathsf{G}$ a finite cyclic group, $\mathfrak{A}$ a finitely generated $\mathbb{F}$-algebra with a $\mathsf{G}$-grading $\Gamma$. Suppose that $\mathfrak{A}_e$ is central in $\mathfrak{A}$. The commutator ideal of $\mathfrak{A}$ is nilpotent. Moreover, $\mathfrak{A}^{(-)}$ is a solvable Lie algebra. In addition, if the support of $\Gamma$ has at most 3 elements, then $\mathfrak{A}^{(-)}$ is a nilpotent Lie algebra.}
	\vspace{4pt}

Finally, in \cite{Mardua02}, A. de Fran\c{c}a and I. Sviridova proved that, for $\mathbb{F}$ an algebraically closed field with $\mathsf{char}(\mathbb{F})=0$ and $\mathsf{G}=\mathbb{Z}_p$ a finite group with $\mathsf{gcd}(p,2)=1$, if $\mathfrak{A}$ is a $\mathsf{G}$-graded algebra such that $\mathfrak{A}_e$ is central in $\mathfrak{A}$, then $\mathfrak{A}$ satisfies the polynomial identity $[x_1,x_2,x_3][x_4,x_5,x_6]\cdots[x_{n-2},x_{n-1},x_n]\in \mathbb{F}\langle X\rangle$ for some $n\in\mathbb{N}$. The results obtained along this work allow us to improve this last result. In Theorem \ref{3.30}, we have proved that if $\mathbb{F}$ is any field of characteristic zero, $\mathsf{G}$ is a finite cyclic group of odd order, and $\mathfrak{A}$ is an algebra with a $\mathsf{G}$-grading such that $\mathfrak{A}_e$ is central in $\mathfrak{A}$, then the commutator ideal of $\mathfrak{A}$ is nilpotent and $[x_1,x_2][x_3,x_4]\cdots[x_{2d-1},x_{2d}]\equiv0$ in $\mathfrak{A}$ for some $d\in\mathbb{N}$. 

%
%

%
%
%
\section{Preliminaries}\label{preliminar}

Let $\mathbb{F}$ be a field, $\mathsf{G}$ a group and $\mathfrak{A}$ an associative $\mathbb{F}$-algebra with a $\mathsf{G}$-grading. Recall that a $\mathsf{G}$-grading on $\mathfrak{A}$ is a decomposition $\Gamma: \mathfrak{A} = \bigoplus_{\xi\in \mathsf{G}} \mathfrak{A}_\xi$ that satisfies $\mathfrak{A}_\xi \mathfrak{A}_\zeta \subseteq \mathfrak{A}_{\xi\zeta}$, for any $\xi, \zeta \in \mathsf{G}$, where $\mathfrak{A}_\xi$'s are vector subspaces of $\mathfrak{A}$. 
Now, let $\mathbb{F}\langle X \rangle$ be the free associative algebra, generated freely by the set $X=\{x_1,x_2,\dots\}$, a countable set of indeterminants, and $\mathbb{F}\langle X^{\mathsf{G}} \rangle$ the free graded associative algebra, generated freely by the set $X^{\mathsf{G}}$, where $X^{\mathsf{G}}=\bigcup_{\xi\in\mathsf{G}} X_\xi$, $X_\xi=\{x_{1 \xi}, x_{2\xi}, \dots\}$, a countable set of graded indeterminants. An element of $\mathbb{F}\langle X \rangle$ (resp. $\mathbb{F}\langle X^{\mathsf{G}} \rangle$) is called \textit{a polynomial} (resp. \textit{a graded polynomial}). When no confusion can arise, a polynomial $g=g(x_{i_1\xi_1},\dots,x_{i_n\xi_n})$ of $\mathbb{F}\langle X^{\mathsf{G}}\rangle$ will be written as $g=g(x_1^{(\xi_1)},\dots,x_n^{(\xi_n)})$. 
We say that $g=g(x_1,\dots,x_n)\in\mathbb{F}\langle X \rangle$ is a \textit{polynomial identity} for $\mathfrak{A}$, denoted by $g\equiv0$ in $\mathfrak{A}$, if $g(a_1,\dots,a_n)=0$ for any $a_1,\dots,a_n\in\mathfrak{A}$. Analogously, we say that $w=w(x_{i_1\xi_1},\dots,x_{i_n\xi_n})\in\mathbb{F}\langle X^{\mathsf{G}} \rangle$ is a \textit{graded polynomial identity} for $\mathfrak{A}$, denoted by $w\equiv_{\mathsf{G}}0$ in $\mathfrak{A}$, if $w(b_{\xi_1},\dots,b_{\xi_n})=0$ for any $b_{\xi_1}\in\mathfrak{A}_{\xi_1}$, $\dots$, $b_{\xi_n}\in\mathfrak{A}_{\xi_n}$. 
For further reading, as well as an overview, on graded algebras, see \cite{ElduKoch13} and \cite{NastOyst11}; on the free $\mathsf{G}$-graded associative $\mathbb{F}$-algebra $\mathbb{F}\langle X^{\mathsf{G}} \rangle$, see \cite{GiamZaic05}, p.66, and \cite{NastOyst04}, Proposition 2.3.1, p.22; and on (graded) polynomial identities, see \cite{Dren00}, \cite{GiamZaic05}, \cite{Jaco75} and \cite{Rowe91}.

\begin{definition}\label{3.26}
Considering $\mathbb{F}\langle X^{\mathsf{G}} \rangle$ with its $\mathsf{G}$-grading, a graded polynomial $g\in\mathbb{F}\langle X^{\mathsf{G}} \rangle$ is called $\mathsf{G}$-homogeneous of degree $\xi$ if it is a summand of graded monomials of degree $\xi\in\mathsf{G}$. When no confusion can arise, we say ``$g$ is homogeneous of degree $\xi$'' or still ``$g$ is a homogeneous graded polynomial''.
\end{definition}

In \cite{Mardua01}, A. de Fran\c{c}a and I. Sviridova introduced the definition of an $\mathsf{f}$-commutator of a ring $\mathfrak{R}$ (see Definitions 2.4 and 2.5 in \cite{Mardua01}). Here, let us generalize this concept. Let $\mathsf{G}$ be a group and $\mathfrak{A}$ an algebra with a $\mathsf{G}$-grading $\Gamma$. An \textit{$\mathsf{f_G}$-commutator in the $\mathsf{G}$-grading of $\mathfrak{A}$ which depends of $\Gamma$}, denoted by $[\ ,\ ]_{\mathsf{f_G}}$, is a map from $ \bigcup_{\xi,\zeta\in\mathsf{G}}\left(\mathfrak{A}_\xi \times \mathfrak{A}_\zeta \right)$ into $\mathfrak{A}$ defined by 
$[a_\xi,b_\zeta]_{\mathsf{f_G}}=\mathsf{f_G}(\xi,\zeta)a_\xi b_\zeta-\mathsf{f_G}(\zeta,\xi)b_\zeta a_\xi $,
where $\mathsf{f_G}$ is a map defined from $\mathsf{G}\times\mathsf{G}$ in some semigroup $\mathfrak{S}$ which acts on the left of $\mathfrak{A}$. It is immediate that $\left[a_\xi,b_\zeta \right]_{\mathsf{f_G}}=-\left[b_\zeta,a_\xi \right]_{\mathsf{f_G}}$, $\left[a_\xi,b_\zeta+c_\zeta \right]_{\mathsf{f_G}}=\left[a_\xi,b_\zeta \right]_{\mathsf{f_G}}+\left[a_\xi,c_\zeta \right]_{\mathsf{f_G}}$, $\left[a_\xi+d_\xi,b_\zeta \right]_{\mathsf{f_G}}=\left[a_\xi,b_\zeta \right]_{\mathsf{f_G}}+\left[d_\xi,b_\zeta \right]_{\mathsf{f_G}}$, and $\left[a_\xi,\lambda b_\zeta \right]_{\mathsf{f_G}}=\lambda\left[a_\xi,b_\zeta \right]_{\mathsf{f_G}}=\left[\lambda a_\xi, b_\zeta \right]_{\mathsf{f_G}}$, for any $a_\xi,d_\xi\in\mathfrak{A}_\xi$, $b_\zeta,c_\zeta\in\mathfrak{A}_\zeta$, and $\lambda\in\mathbb{F}$. 
As the decomposition of each element of $\mathfrak{A}$ in homogeneous elements is unique, given any $a=\sum_{\xi\in\mathsf{G}} a_{\xi}$ and $b=\sum_{\zeta\in\mathsf{G}} b_{\zeta}$ in $\mathfrak{A}$, we define the $\mathsf{f_G}$-commutator of $a$ and $b$ as 
\begin{equation*}
[a,b]_{\mathsf{f_G}}=\left[\sum_{\xi\in\mathsf{G}} a_\xi,\sum_{\zeta\in\mathsf{G}}b_\zeta\right]_{\mathsf{f_G}}\coloneqq\sum_{\xi,\zeta\in\mathsf{G}} \left[a_\xi,b_\zeta \right]_{\mathsf{f_G}}\ .
\end{equation*}
Obviously $\left[a,b \right]_{\mathsf{f_G}}=-\left[b,a \right]_{\mathsf{f_G}}$, $\left[a,b+c \right]_{\mathsf{f_G}}=\left[a,b \right]_{\mathsf{f_G}}+\left[a,c \right]_{\mathsf{f_G}}$ and $\left[\lambda a,b \right]_{\mathsf{f_G}}=\left[a,\lambda b \right]_{\mathsf{f_G}}=\lambda \left[a,b \right]_{\mathsf{f_G}}$ for any $a,b,c\in\mathfrak{A}$ and $\lambda \in\mathbb{F}$. For each $\mathsf{f_G}$, we say only ``\textit{$[\ ,\ ]_{\mathsf{f_G}}$ is an $\mathsf{f_G}$-commutator of $\mathfrak{A}$ which depends of  $\Gamma$}''. And if $[a,b]_{\mathsf{f_G}}=0$ for any $a,b\in\mathfrak{A}$, we say that ``\textit{$\mathfrak{A}$ is an $\mathsf{f_G}$-commutative algebra}''.

When $[x^{(\xi)},y^{(\zeta)}]_{\mathsf{f_G}}$ is a nontrivial graded polynomial of $\mathbb{F}\langle X^{\mathsf{G}}\rangle$, there are 3 possibilities for $\mathsf{f_G}$ in $(\xi,\zeta)$: \textit{i)} $\mathsf{f_G}(\xi,\zeta)=0$ and $\mathsf{f_G}(\zeta,\xi)\neq0$; \textit{ii)} $\mathsf{f_G}(\xi,\zeta)\neq0$ and $\mathsf{f_G}(\zeta,\xi)=0$; \textit{iii)} $\mathsf{f_G}(\xi,\zeta)\neq0$ and $\mathsf{f_G}(\zeta,\xi)\neq0$. The Examples \ref{3.17}, \ref{3.16} and \ref{3.18} below illustrate graded algebras that satisfy graded polynomial identities which represent each of these three cases. When $\mathsf{f_G}(\xi,\zeta)\in\mathbb{F}^*$ for any $\xi, \zeta\in\mathsf{G}$, note that $[x^{(\xi)},y^{(\zeta)}]_{\mathsf{f_G}}\equiv_{\mathsf{G}}0$ for any $\xi, \zeta\in\mathsf{G}$ implies that the homogeneous elements of $\mathfrak{A}$ are $\mathsf{t}$-commutative (as in Definition 2.5, in \cite{Mardua01}) with each other, for $\mathsf{t}$ defined by $\mathsf{t}(a_\xi,b_\zeta)=\mathsf{f_G}(\xi,\zeta)^{-1}\mathsf{f_G}(\zeta,\xi)$ for any homogeneous elements $a_\xi\in\mathfrak{A}_\xi$ and $b_\zeta\in\mathfrak{A}_\zeta$. Consequently, for any map  $\mathsf{f_G}: \mathsf{G}\times \mathsf{G} \rightarrow \mathbb{F}$, it follows that
	\begin{equation*}
	\left[x^{(\xi)}, y^{(\zeta)} \right]_{\mathsf{f_{G}}}=\left\{
		\begin{array}{c l}	
    \mathsf{f_G}(\xi,\zeta) \left[x^{(\xi)} \ , y^{(\zeta)}\right]_{\mathsf{t}} \ ,& \mbox{if } \mathsf{f_G}(\xi,\zeta)\neq0 \mbox{ and } \mathsf{f_G}(\zeta,\xi)\neq0 \\
     \mathsf{f_G}(\xi,\zeta) x^{(\xi)}  y^{(\zeta)} \ ,& \mbox{if } \mathsf{f_G}(\xi,\zeta)\neq0 \mbox{ and } \mathsf{f_G}(\zeta,\xi)=0 \\
    - \mathsf{f_G}(\zeta,\xi) y^{(\zeta)} x^{(\xi)}  \ ,& \mbox{if } \mathsf{f_G}(\xi,\zeta)=0 \mbox{ and } \mathsf{f_G}(\zeta,\xi)\neq0\\
     0  \ ,& \mbox{if } \mathsf{f_G}(\xi,\zeta)=\mathsf{f_G}(\zeta,\xi)=0
		\end{array}\right. \ .
	\end{equation*}
Conversely, if $\mathfrak{A}$ is $\mathsf{h}$-commutative for some map $\mathsf{h}$ from $\mathfrak{A}\times\mathfrak{A}$ in $\mathbb{F}$ which satisfies $\mathsf{h}(a_\xi, b_\zeta)=\mathsf{h}(d_\xi,c_\zeta)$ for any $a_\xi,d_\xi\in\mathfrak{A}_\xi\setminus\{0\}$ and $b_\zeta,c_\zeta\in\mathfrak{A}_\zeta\setminus\{0\}$, the map $\mathsf{h_G}$ defined by $\mathsf{h_G}(\xi,\zeta)=1+\mathsf{h}(b_\zeta,a_\xi)$, for any $a_\xi\in\mathfrak{A}_\xi$ and $b_\zeta\in\mathfrak{A}_\zeta$, makes $\mathfrak{A}$ an $[\ ,\ ]_{\mathsf{h_G}}$-commutative algebra.

\begin{example}\label{3.17}
Let $\mathbb{F}$ be a field, $\mathcal{K}=\mathbb{Z}_2\times\mathbb{Z}_2$ the Klein group, and $\mathfrak{B}=M_2(\mathbb{F})$ the $2\times 2$ matrix algebra over $\mathbb{F}$ with its natural $\mathcal{K}$-grading, i.e. $\mathfrak{B}_{(\bar0,\bar0)}=\mathsf{span}_{\mathbb{F}}\{E_{11}+E_{22}\}$, $\mathfrak{B}_{(\bar1,\bar1)}=\mathsf{span}_{\mathbb{F}}\{E_{11}-E_{22}\}$, $\mathfrak{B}_{(\bar0,\bar1)}=\mathsf{span}_{\mathbb{F}}\{E_{12}+E_{21}\}$ and $\mathfrak{B}_{(\bar1,\bar0)}=\mathsf{span}_{\mathbb{F}}\{E_{12}-E_{21}\}$, where $E_{ij}$'s are the elementary matrices. 
Note that $x^{(\xi)}y^{(\zeta)}+y^{(\zeta)}x^{(\xi)}\equiv_{\mathcal{K}}0$ in $\mathfrak{B}$ for distinct $\xi,\zeta\in\mathcal{K}\setminus\{(\bar0,\bar0)\}$, and $(z^{(\tau)})^2\not\equiv_{\mathcal{K}}0$ in $\mathfrak{B}$ for any $\tau\in\mathcal{K}$. On the other side, $[x^{((\bar0,\bar0))},y^{(\xi)}]\equiv_{\mathcal{K}}0$ and $[y^{(\xi)},z^{(\xi)}]\equiv_{\mathcal{K}}0$ in $\mathfrak{B}$, for any $\xi\in\mathcal{K}$. Define the map $\mathsf{f}_{\mathcal{K}}$ from $\mathcal{K}\times\mathcal{K}$ in $\mathbb{F}$ satisfying $\mathsf{f}_{\mathcal{K}}((\bar0,\bar0),\xi)=\mathsf{f}_{\mathcal{K}}(\xi,(\bar0,\bar0))=\mathsf{f}_{\mathcal{K}}(\xi,\xi)=1$ for any $\xi\in\mathcal{K}$ and $\mathsf{f}_{\mathcal{K}}(\zeta,\tau)=-\mathsf{f}_{\mathcal{K}}(\tau,\zeta)$ for distinct $\zeta,\tau\in\mathcal{K}\setminus\{(\bar0,\bar0)\}$. Therefore, $\mathfrak{B}$ is $[\ ,\ ]_{\mathsf{f}_{\mathcal{K}}}$-commutative.
\end{example}

For the two examples below, being $\mathfrak{B}=M_{k}(\mathbb{F})$, $\mathsf{G}$ any group and $\theta$ a $k$-tuple in $\mathsf{G}^k$, the ``\textit{elementary $\mathsf{G}$-grading on $\mathfrak{B}$ defined by $\theta$}'' is the $\mathsf{G}$-grading on $\mathfrak{B}$ defined by $\mathfrak{B}_\xi=\mathsf{span}_{\mathbb{F}}\{E_{ij}\in\mathfrak{B}: \theta_i^{-1}\theta_j=\xi\}$.

\begin{example}\label{3.16}
Let $\mathbb{F}$ be a field, $\mathsf{G}=\mathbb{Z}_{3}\times\mathbb{Z}_{5}$ the cyclic group of order 15, and $\mathfrak{B}=M_4(\mathbb{F})$ the $4\times 4$ matrix algebra over $\mathbb{F}$. Consider the elementary $\mathsf{G}$-grading $\Gamma$ on $\mathfrak{B}$ defined by the $4$-tuple $\theta=(\theta_1,\theta_2,\theta_3,\theta_4)\in\mathsf{G}^4$, where $\theta_1=(\bar0,\bar0)$, $\theta_2=(\bar1,\bar0)$, $\theta_3=(\bar0,\bar1)$ and $\theta_4=(\bar0,\bar4)$. We have that $\mathsf{Supp}(\Gamma)=\mathsf{G}\setminus\{(\bar1,\bar2), (\bar1,\bar3), (\bar2,\bar2), (\bar2,\bar3)\}$, and 
	\begin{equation*}
	\begin{array}{l l l}	
\mathfrak{B}_{(\bar0,\bar0)}=\mathsf{span}_{\mathbb{F}}\{E_{11},E_{22},E_{33},E_{44}\}, &
\mathfrak{B}_{(\bar0,\bar4)}=\mathsf{span}_{\mathbb{F}}\{E_{14},E_{31}\}, &
\mathfrak{B}_{(\bar2,\bar0)}=\mathsf{span}_{\mathbb{F}}\{E_{21}\}, \\

\mathfrak{B}_{(\bar0,\bar1)}=\mathsf{span}_{\mathbb{F}}\{E_{13},E_{41}\}, &
\mathfrak{B}_{(\bar1,\bar0)}=\mathsf{span}_{\mathbb{F}}\{E_{12}\}, &
\mathfrak{B}_{(\bar2,\bar1)}=\mathsf{span}_{\mathbb{F}}\{E_{23}\}, \\

\mathfrak{B}_{(\bar0,\bar2)}=\mathsf{span}_{\mathbb{F}}\{E_{43}\}, &
\mathfrak{B}_{(\bar1,\bar1)}=\mathsf{span}_{\mathbb{F}}\{E_{42}\}, &
\mathfrak{B}_{(\bar2,\bar4)}=\mathsf{span}_{\mathbb{F}}\{E_{24}\},\\

\mathfrak{B}_{(\bar0,\bar3)}=\mathsf{span}_{\mathbb{F}}\{E_{34}\}, &
\mathfrak{B}_{(\bar1,\bar4)}=\mathsf{span}_{\mathbb{F}}\{E_{32}\}. &
	\end{array} 
	\end{equation*}
Consider the map $\mathsf{f_G}:\mathsf{G}\times\mathsf{G}\rightarrow\mathbb{F}$ satisfying $\mathsf{f_G}((\bar0,\bar0),(\bar0,\bar0))=\mathsf{f_G}((\bar1,\bar0),(\bar0,\bar2))=\mathsf{f_G}((\bar0,\bar2),(\bar1,\bar0))=\mathsf{f_G}((\bar2,\bar0),(\bar1,\bar1))=\mathsf{f_G}((\bar2,\bar1),(\bar1,\bar1))=1$ and $\mathsf{f_G}(\xi,\zeta)=0$ for any $(\xi,\zeta)\in\mathsf{G}\times\mathsf{G}$ such that $(\xi,\zeta)\notin\{((\bar0,\bar0),(\bar0,\bar0)),((\bar1,\bar0),(\bar0,\bar2)),((\bar0,\bar2),(\bar1,\bar0)),((\bar2,\bar0),(\bar1,\bar1)),((\bar2,\bar1),(\bar1,\bar1))\}$. Therefore, we conclude that $g(x_1^{(\xi_1)},\dots,x_{15}^{(\xi_{15})})=\sum_{i,j=1}^{15}[x_i^{(\xi_i)},x_j^{(\xi_j)}]_{\mathsf{f_G}}+\sum_{\xi_l\notin\{\theta_1,\theta_3,\theta_4\}}(x_l^{(\xi_l)})^2\equiv_{\mathsf{G}}0$ in $\mathfrak{B}$.
\end{example}

\begin{example}\label{3.18}
Consider $\mathsf{G}$ and $\mathfrak{B}$ as in Example \ref{3.16}, $\mathfrak{B}$ with the elementary $\mathsf{G}$-grading defined by the $4$-tuple $\hat\theta=(\hat\theta_1,\hat\theta_2,\hat\theta_3,\hat\theta_4)\in\mathsf{G}^4$, where $\hat\theta_1=(\bar0,\bar0)$, $\hat\theta_2=(\bar1,\bar0)$ and $\hat\theta_3=\hat\theta_4=(\bar0,\bar1)$. As $\mathfrak{B}_{(\bar1,\bar0)}=\mathsf{span}_{\mathbb{F}}\{E_{12}\}$ and $\mathfrak{B}_{(\bar1,\bar4)}=\mathsf{span}_{\mathbb{F}}\{E_{32},E_{42}\}$, we have that $x^{(\bar1,\bar0)}y^{(\bar1,\bar4)}\equiv_{\mathsf{G}}0$, $y^{(\bar1,\bar4)}x^{(\bar1,\bar0)}\equiv_{\mathsf{G}}0$, $(x^{(\bar1,\bar4)})^2\equiv_{\mathsf{G}}0$ and $(y^{(\bar1,\bar0)})^2\equiv_{\mathsf{G}}0$ in $\mathfrak{B}$. On the other hand, putting $e=(\bar0,\bar0)$, we have that $[x^{(e)},y^{(e)}]$ is not a graded polynomial identity for $\mathfrak{B}$, because $E_{34}, E_{43}\in\mathfrak{B}_{e}$. Thus, if $\mathfrak{B}$ is $[\ ,\ ]_{\mathsf{f_G}}$-commutative for some map $\mathsf{f_G}$, then $\mathsf{f_G}(\xi,\zeta)$ is not necessarily zero for any $\xi,\zeta\in\{(\bar1,\bar0),(\bar1,\bar4)\}$, but we must have $\mathsf{f_G}(e,e)=0$.
\end{example}

%
%
%
\subsection{Some Results in Graded Algebras and PI-Theory}
Here, let us review key results from graded algebras and PI-Theory that will be used in the next section. 
The first result is due to J. Bergen and M. Cohen (1986). Posteriorly, Yu. Bahturin, A. Giambruno and D. Riley (1998) showed the same result and, in addition, presented bounds for the degrees of the polynomial identities involved.

\begin{lemma}[Corollary 9 in \cite{BergCohe86}, or Theorem 5.3 in \cite{BahtGiamRile98}]\label{teoBergCoheBaht}
	Let $\mathsf{G}$ be a finite group with neutral element $e$, and $\mathfrak{A}$ a $\mathsf{G}$-graded algebra. If $\mathfrak{A}_e$ is a $PI$-algebra, then $\mathfrak{A}$ is also a $PI$-algebra.
\end{lemma}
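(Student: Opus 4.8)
The plan is to reduce the statement to a combinatorial fact about products of homogeneous elements and then to \emph{fold} long products by means of a multilinear identity of $\mathfrak{A}_e$. Write $d=|\mathsf{G}|$. Since $\mathfrak{A}_e$ is a $PI$-algebra, it satisfies a nonzero multilinear identity
\[ f(x_1,\dots,x_k)=x_1x_2\cdots x_k+\sum_{\sigma\neq\mathrm{id}}\alpha_\sigma\,x_{\sigma(1)}\cdots x_{\sigma(k)}\equiv0 \quad\text{in }\mathfrak{A}_e, \]
for some $k$ (multilinearize any identity; if the field is finite, pass to an infinite extension and descend). The next, elementary but crucial, reduction is that a \emph{multilinear} polynomial $\Phi(x_1,\dots,x_n)$ is an ordinary identity of $\mathfrak{A}$ if and only if it vanishes on all \emph{homogeneous} substitutions: expanding each argument into its homogeneous components and using multilinearity splits $\Phi(a_1,\dots,a_n)$ into a sum over ``degree profiles'' $\gamma=(g_1,\dots,g_n)\in\mathsf{G}^n$. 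Thus it suffices to kill every homogeneous evaluation, and to produce a multilinear $\Phi$ of some degree $n$ that does so.

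The combinatorial heart is a pigeonhole argument. Fix homogeneous $a_i\in\mathfrak{A}_{g_i}$ and form the partial products $P_0=e$, $P_j=g_1\cdots g_j\in\mathsf{G}$. If $n+1>kd$, then among the $n+1$ values $P_0,\dots,P_n$ some $h\in\mathsf{G}$ occurs at least $k+1$ times, say $P_{i_0}=\cdots=P_{i_k}=h$ with $i_0<\cdots<i_k$. The \emph{consecutive} blocks $B_t:=a_{i_{t-1}+1}\cdots a_{i_t}$ then have degree $P_{i_{t-1}}^{-1}P_{i_t}=e$, so $B_1,\dots,B_k\in\mathfrak{A}_e$, and they are contiguous (sharing the endpoints $i_t$, with no gaps between them). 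Applying $f$ gives $f(B_1,\dots,B_k)=0$, and since permuting contiguous blocks keeps them contiguous, inserting this between the prefix $L=a_1\cdots a_{i_0}$ and suffix $R=a_{i_k+1}\cdots a_n$ yields
\[ a_1\cdots a_n=-\sum_{\sigma\neq\mathrm{id}}\alpha_\sigma\,L\,B_{\sigma(1)}\cdots B_{\sigma(k)}\,R, \]
so every sufficiently long homogeneous product equals an explicit combination of its own block-permutations.

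The main obstacle is that the data $(h;\,i_0,\dots,i_k;\,L,R)$ depend on the profile $\gamma$, so the displayed folding is a \emph{different} relation for each of the $d^n$ profiles and does not by itself furnish one universal polynomial. I would resolve this by a codimension count: let $V_\gamma\subseteq P_n$ be the multilinear polynomials vanishing on all substitutions of profile $\gamma$, so that $P_n\cap T(\mathfrak{A})=\bigcap_\gamma V_\gamma$ and hence $c_n(\mathfrak{A})=\mathrm{codim}\,\bigcap_\gamma V_\gamma\le\sum_\gamma\mathrm{codim}\,V_\gamma=c_n^{\mathsf{G}}(\mathfrak{A})$. The folding relation above is exactly the tool that, for each profile, reduces the evaluated monomials to a bounded ``irreducible'' set, bounding each $\mathrm{codim}\,V_\gamma$; summing the $d^n$ profiles gives $c_n^{\mathsf{G}}(\mathfrak{A})\le C^{\,n}$, and since $C^{\,n}<n!=\dim P_n$ for large $n$, one gets $c_n(\mathfrak{A})<n!$, i.e. a genuine multilinear identity of $\mathfrak{A}$. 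Turning this into the explicit degree bound of Bahturin--Giambruno--Riley is the quantitative part and is where the real work lies. Alternatively, one may bypass the count through the structural remark that every homogeneous $a_g$ satisfies $a_g^{\,d}\in\mathfrak{A}_e$ (as $\mathrm{ord}(g)\mid d$), exhibiting $\mathfrak{A}$ as integral of bounded degree over the $PI$-algebra $\mathfrak{A}_e$ and feeding a Shirshov-type height argument to the same conclusion.
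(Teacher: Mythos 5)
The paper does not actually prove this lemma --- it is quoted from \cite{BergCohe86} and \cite{BahtGiamRile98} --- so your attempt can only be measured against the known proofs. Your skeleton is indeed the standard one: reduce to multilinear polynomials and homogeneous substitutions, apply the pigeonhole to the partial products $P_0,\dots,P_n\in\mathsf{G}$ to extract $k$ contiguous blocks of degree $e$, fold them with a multilinear identity of $\mathfrak{A}_e$, and pass through the inequality $c_n(\mathfrak{A})\le c_n^{\mathsf{G}}(\mathfrak{A})=\sum_\gamma \mathrm{codim}\,V_\gamma$. All of that is correct and is exactly the combinatorial device of Bahturin--Giambruno--Riley.

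The genuine gap is the sentence asserting that the folding relation ``reduces the evaluated monomials to a bounded irreducible set, bounding each $\mathrm{codim}\,V_\gamma$.'' That is the theorem, not a remark. The folding rewrites one monomial of length $n$ as a linear combination of block-permutations of the \emph{same} length and the \emph{same} profile, and those new monomials are themselves reducible; without a well-founded order on monomials that strictly decreases under the rewriting you cannot conclude termination, let alone that the normal forms number at most $C^n$ rather than order $n!$. Supplying this is precisely the Latyshev--Regev argument ($k$-decomposable permutations, Dilworth/RSK counting of $k$-good permutations), and it is the entire quantitative content of the result; your proof stops exactly where it begins. The proposed shortcut in the last sentence does not rescue this: $a_g^{\,d}\in\mathfrak{A}_e$ is not integrality in any usable sense, since $\mathfrak{A}_e$ is in general neither central nor commutative, so $a_g$ satisfies no monic polynomial with coefficients acting centrally; moreover Shirshov's height theorem requires a finitely generated algebra, which is not assumed in the statement. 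So the write-up is a correct and well-organized plan of attack with the decisive step --- the exponential bound on each profile codimension --- asserted rather than proved.
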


Inspired by this result, in 2022, A. de Fran\c{c}a and I. Sviridova proved in \cite{Mardua01} the following results:
\begin{lemma}[Theorem 3.9, \cite{Mardua01}]\label{mardua3.9}
	Let $\mathsf{S}$ be a left cancellative monoid and $\mathfrak{R}$ a ring with a finite $\mathsf{S}$-grading of order $d$. If $\mathfrak{R}_e$ is nilpotent of index $\mathsf{nd}(\mathfrak{R}_e)=r\geq1$, then $\mathfrak{R}$ is a nilpotent ring, such that $r \leq \mathsf{nd}(\mathfrak{R})\leq dr$ for $r>1$, and $r\leq\mathsf{nd}(\mathfrak{R})\leq d+1$ for $r=1$.
\end{lemma}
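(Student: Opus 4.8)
The plan is to reduce the nilpotency of $\mathfrak{R}$ to a purely combinatorial statement about products of homogeneous elements, and then to exploit left cancellation in $\mathsf{S}$ together with a pigeonhole count. Since the homogeneous components span $\mathfrak{R}$, it suffices to bound the length of a \emph{nonzero} product $a_{s_1}a_{s_2}\cdots a_{s_N}$ with $a_{s_i}\in\mathfrak{R}_{s_i}$. To such a product I attach the sequence of partial products $p_0=e$ and $p_k=s_1 s_2\cdots s_k\in\mathsf{S}$ for $1\le k\le N$. If the total product is nonzero, then every initial segment $a_{s_1}\cdots a_{s_k}$ is nonzero and lies in $\mathfrak{R}_{p_k}$, so each $p_k$ must belong to $\mathsf{Supp}(\Gamma)$.

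The key observation is that whenever two partial products coincide, say $p_i=p_j$ with $i<j$, left cancellation applied to $p_i=p_i\,(s_{i+1}\cdots s_j)=p_i\,e$ yields $s_{i+1}\cdots s_j=e$; hence the consecutive block $a_{s_{i+1}}\cdots a_{s_j}$ is homogeneous of degree $e$, i.e. it lies in $\mathfrak{R}_e$. Consequently, if some element of $\mathsf{S}$ is attained by the partial products at positions $i_0<i_1<\cdots<i_r$, the $r$ adjacent blocks determined by these indices factor a sub-word of $a_{s_1}\cdots a_{s_N}$ as a product of $r$ elements of $\mathfrak{R}_e$; since $\mathfrak{R}_e^{\,r}=0$, the entire product vanishes. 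So everything reduces to forcing $r+1$ coincidences among the partial products.

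Here the two regimes must be separated according to whether $e\in\mathsf{Supp}(\Gamma)$. When $r>1$ we have $\mathfrak{R}_e\ne0$, so $e\in\mathsf{Supp}(\Gamma)$ and the $N+1$ partial products $p_0,\dots,p_N$ all lie in a set of size $d$; as soon as $N+1>dr$, i.e. $N\ge dr$, some value repeats at least $r+1$ times and the product is zero, giving $\mathsf{nd}(\mathfrak{R})\le dr$. When $r=1$ we have $\mathfrak{R}_e=0$, so $e\notin\mathsf{Supp}(\Gamma)$ and $p_0=e$ is excluded from the count; the relevant partial products are $p_1,\dots,p_N$, a single coincidence already produces a block in $\mathfrak{R}_e=0$, and the threshold becomes $N\ge d+1$, giving $\mathsf{nd}(\mathfrak{R})\le d+1$. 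The lower bound $r\le\mathsf{nd}(\mathfrak{R})$ is immediate, since $\mathfrak{R}_e$ is a subring and $\mathfrak{R}_e^{\,r-1}\subseteq\mathfrak{R}^{\,r-1}$ is nonzero by hypothesis.

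I expect the main obstacle to be the bookkeeping in the two cases rather than any single hard step. One must verify that the blocks extracted from coinciding partial products are genuinely adjacent, so that their product is a true sub-word of the original product and not merely a rearrangement, and one must track precisely whether the neutral element contributes to the support, since this is exactly what separates the bound $dr$ for $r>1$ from the sharper bound $d+1$ for $r=1$. Finally, one should confirm that the left cancellative hypothesis is used only through the implication $p_i=p_j\Rightarrow s_{i+1}\cdots s_j=e$, and that no right cancellation is ever invoked.
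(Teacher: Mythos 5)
The paper does not prove this lemma itself --- it is quoted from \cite{Mardua01} (Theorem 3.9) --- so there is no in-text proof to compare against; judged on its own, your argument is correct and complete. The reduction to products of homogeneous elements, the use of left cancellation to convert a coincidence $p_i=p_j$ of partial products into a contiguous block lying in $\mathfrak{R}_e$, and the two pigeonhole counts (the $N+1$ partial products including $p_0=e$ when $\mathfrak{R}_e\neq\{0\}$, versus only $p_1,\dots,p_N$ when $\mathfrak{R}_e=\{0\}$, i.e. $e\notin\mathsf{Supp}(\Gamma)$) yield exactly the stated bounds $dr$ and $d+1$, and the lower bound $r\leq\mathsf{nd}(\mathfrak{R})$ is immediate from $\mathfrak{R}_e^{\,r-1}\subseteq\mathfrak{R}^{\,r-1}$. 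This is the standard Bergen--Cohen-style mechanism that the cited source relies on, so no genuinely different route is being taken.
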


\begin{lemma}[Proposition 4.7, \cite{Mardua01}]\label{mardua4.7}
	Let $\mathsf{S}$ be a monoid and $\mathfrak{R}$ a ring with a finite $\mathsf{S}$-grading of order $d$. If $\mathfrak{R}_e$ is nil of index $2$ and $\mathsf{char}(\mathfrak{R}_e)\neq2$, then $\mathfrak{R}$ is nilpotent with $\mathsf{nd}(\mathfrak{R})\leq 3d$.
\end{lemma}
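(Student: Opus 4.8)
The plan is to reduce the statement to the already-established Lemma \ref{mardua3.9}: the heart of the matter is to show that the hypothesis ``$\mathfrak{R}_e$ nil of index $2$'' is in fact strong enough to force the \emph{subring} $\mathfrak{R}_e$ to be \emph{nilpotent} of index at most $3$. Once this is in hand, the passage from the neutral component to the whole ring is handled verbatim by Lemma \ref{mardua3.9}, and the bound $3d$ drops out of its bound $dr$ with $r\leq 3$.

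First I would linearize. Since every $a\in\mathfrak{R}_e$ satisfies $a^2=0$, expanding $(a+b)^2=0$ for $a,b\in\mathfrak{R}_e$ gives $ab+ba=0$; that is, $\mathfrak{R}_e$ is anticommutative, $ab=-ba$ for all $a,b\in\mathfrak{R}_e$. (This step needs no assumption on the characteristic, and conversely anticommutativity returns $a^2=0$ once $\mathsf{char}\neq2$, so for such rings the two conditions coincide.) Note that $\mathfrak{R}_e$ is genuinely a subring, so every product written below again lies in $\mathfrak{R}_e$.

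The key step is $\mathfrak{R}_e^3=0$, and this is exactly where associativity interacts with anticommutativity and where $\mathsf{char}(\mathfrak{R}_e)\neq2$ enters essentially. Fix $x,y,z\in\mathfrak{R}_e$ and evaluate the product $zxy$, which is unambiguous by associativity, in two ways. Treating $xy$ as a single element and anticommuting $z$ past it gives $z(xy)=-(xy)z=-xyz$; pushing $z$ to the right one factor at a time gives $z(xy)=(zx)y=(-xz)y=-x(zy)=x(yz)=xyz$. Hence $xyz=-xyz$, so $2\,xyz=0$, and since $\mathsf{char}(\mathfrak{R}_e)\neq2$ we conclude $xyz=0$. (Equivalently, one may apply nil of index $2$ to the ``mixed'' element $xy+z\in\mathfrak{R}_e$: then $0=(xy+z)^2=xyz+zxy=2\,xyz$.) As $x,y,z$ were arbitrary, $\mathfrak{R}_e^3=0$, so $\mathfrak{R}_e$ is nilpotent of some index $r\leq 3$.

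Finally I would invoke Lemma \ref{mardua3.9} with $\mathfrak{R}_e$ nilpotent of index $r\in\{1,2,3\}$: that lemma yields that $\mathfrak{R}$ is nilpotent with $\mathsf{nd}(\mathfrak{R})\leq dr$ when $r>1$, and $\mathsf{nd}(\mathfrak{R})\leq d+1$ when $r=1$; in every case $\mathsf{nd}(\mathfrak{R})\leq 3d$ (for $r=1$ use $d+1\leq 3d$, valid since $d\geq1$). I expect the only genuine obstacle to be the nilpotency step: recognizing that ``nil of index $2$'' is far stronger than it first appears, namely that an associative anticommutative ring of characteristic $\neq2$ already satisfies $\mathfrak{R}_e^3=0$, so that the Grassmann-type behaviour one might fear cannot occur and Lemma \ref{mardua3.9} becomes applicable. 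A secondary point to verify is that Lemma \ref{mardua3.9} is stated for a \emph{left cancellative} monoid, so one should either check that the reduction is legitimate in the stated generality or restrict $\mathsf{S}$ accordingly.
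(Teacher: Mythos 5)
The paper does not prove this lemma at all: it is imported verbatim from \cite{Mardua01} (Proposition 4.7 there), so there is no in-paper argument to compare yours against. Judged on its own terms, your reconstruction is correct and is surely the intended one. The linearization $(a+b)^2=0\Rightarrow ab=-ba$, followed by the two evaluations of $z(xy)$ (anticommuting $z$ past the product $xy\in\mathfrak{R}_e$ versus past $x$ and $y$ one at a time) to get $2xyz=0$ and hence $\mathfrak{R}_e^3=\{0\}$, is exactly the standard way to see that an associative nil-of-index-$2$ ring away from characteristic $2$ is nilpotent of index at most $3$; feeding $r\leq 3$ into Lemma \ref{mardua3.9} then gives $\mathsf{nd}(\mathfrak{R})\leq\max\{3d,\,2d,\,d+1\}=3d$ since $d\geq1$. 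One small point of precision: to pass from $2xyz=0$ to $xyz=0$ you need the additive group of $\mathfrak{R}_e$ to be $2$-torsion-free, which is how ``$\mathsf{char}(\mathfrak{R}_e)\neq2$'' must be read here (characteristic $4$, for instance, would not do).

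The caveat you flag at the end is not a formality, and you are right to worry about it: the lemma as transcribed in this paper (arbitrary monoid $\mathsf{S}$) is actually false without some cancellativity. Take $\mathsf{S}=\{e,s\}$ with $s^2=s$, and $\mathfrak{R}=A\times\mathbb{F}_3$ graded by $\mathfrak{R}_e=A\times\{0\}$ and $\mathfrak{R}_s=\{0\}\times\mathbb{F}_3$, where $A$ is $\mathbb{Z}/3\mathbb{Z}$ equipped with the zero multiplication: all the grading inclusions hold, $\mathfrak{R}_e$ is nil of index $2$ and has characteristic $3$, yet $\mathfrak{R}$ is not nilpotent. So the hypothesis must be read as ($\mathsf{S}$ left) cancellative, matching Lemma \ref{mardua3.9}; under that reading your proof is complete, and the restriction costs nothing for the present paper, which only ever applies the lemma with $\mathsf{G}$ a group.
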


In the two results below, $\mathbb{F}$ is an algebraically closed field of characteristic zero. Recall that a $\mathsf{G}$-graded algebra $\mathfrak{A}$ is said to be \textit{simple graded} (or \textit{$\mathsf{G}$-simple}) if $\mathfrak{A}^2\neq\{0\}$ and $\mathfrak{A}$ does not have proper $\mathsf{G}$-graded ideals. In \cite{BahtSehgZaic08}, Yu. Bahturin, M. Zaicev and S. Sehgal classified the $\mathsf{G}$-simple $\mathbb{F}$-algebras of finite dimension. They proved the following result:
\begin{lemma}[Theorem 3, \cite{BahtSehgZaic08}]\label{Bahtteo3}
	Let $\mathsf{G}$ be any group, and $\mathfrak{A}$ a finite dimensional $\mathsf{G}$-graded $\mathbb{F}$-algebra. Then $\mathfrak{A}$ is $\mathsf{G}$-simple iff $\mathfrak{A}$ is $\mathsf{G}$-isomorphic to $\mathfrak{B}=M_{k}(\mathbb{F}^{\sigma}[\mathsf{H}])$, where $\mathsf{H}$ is a finite subgroup of $\mathsf{G}$ and $\sigma: \mathsf{H}\times\mathsf{H}\rightarrow\mathbb{F}^*$ is a $2$-cocycle on $\mathsf{H}$. The $\mathsf{G}$-grading on $\mathfrak{B}$ is the defined by a $k$-tuple $(\theta_1,\dots,\theta_k)\in\mathsf{G}^k$ so that $\mathfrak{B}_\xi=\mathsf{span}_{\mathbb{F}}\{E_{ij}\eta_\zeta: \theta_i^{-1}\zeta \theta_j=\xi\}$.
\end{lemma}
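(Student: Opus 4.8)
The plan is to prove the stated equivalence by treating the two implications separately, with the forward (structural) direction carrying essentially all of the content. Throughout, since $\mathfrak{A}$ is finite-dimensional, its support generates a finite subgroup of $\mathsf{G}$, so I may assume $\mathsf{G}$ itself is finite.

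For the direction $(\Leftarrow)$, I would first verify that the prescribed decomposition $\mathfrak{B}_\xi=\mathsf{span}_{\mathbb{F}}\{E_{ij}\eta_\zeta:\theta_i^{-1}\zeta\theta_j=\xi\}$ is a genuine $\mathsf{G}$-grading: the $2$-cocycle identity for $\sigma$ guarantees associativity of $\mathbb{F}^{\sigma}[\mathsf{H}]$, and the assignment of degrees is compatible with the products $E_{ij}E_{kl}=\delta_{jk}E_{il}$ and $\eta_\zeta\eta_{\zeta'}=\sigma(\zeta,\zeta')\eta_{\zeta\zeta'}$. The $\mathsf{G}$-simplicity would then follow from a matrix-unit argument. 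Each $\eta_\zeta$ is invertible in $\mathbb{F}^{\sigma}[\mathsf{H}]$, so $\mathbb{F}^{\sigma}[\mathsf{H}]$ is a graded division algebra; given any nonzero graded ideal $I$, I would pick a nonzero homogeneous element of $I$, multiply on both sides by suitable matrix units $E_{pi},E_{jq}$ and by inverses of the homogeneous units $\eta_\zeta$, and so produce a nonzero scalar multiple of the idempotent $E_{11}$, forcing $I=\mathfrak{B}$.

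For the direction $(\Rightarrow)$, suppose $\mathfrak{A}$ is finite-dimensional and $\mathsf{G}$-simple. I would first show that $\mathfrak{A}$ is graded-semisimple: the graded Jacobson radical $J^{\mathsf{gr}}(\mathfrak{A})$ is a graded two-sided ideal, so $\mathsf{G}$-simplicity forces it to be $\{0\}$ or $\mathfrak{A}$, and the latter would make $\mathfrak{A}$ graded-nilpotent, whence $\mathfrak{A}^2\subsetneq\mathfrak{A}$ would be a proper nonzero graded ideal, contradicting simplicity together with $\mathfrak{A}^2\neq\{0\}$. Thus $J^{\mathsf{gr}}(\mathfrak{A})=\{0\}$, and since $\mathfrak{A}$ is graded-Artinian, the graded Wedderburn--Artin theorem yields a $\mathsf{G}$-graded isomorphism $\mathfrak{A}\cong M_k(\mathcal{D})$, where $\mathcal{D}$ is a finite-dimensional graded division algebra and the matrix grading is the elementary one shifted by a tuple $(\theta_1,\dots,\theta_k)\in\mathsf{G}^k$. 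It then remains to identify $\mathcal{D}$. Its neutral component $\mathcal{D}_e$ is a finite-dimensional division algebra over the algebraically closed field $\mathbb{F}$, hence $\mathcal{D}_e=\mathbb{F}$; for each $h$ with $\mathcal{D}_h\neq\{0\}$, fixing one nonzero (invertible) $\eta_h\in\mathcal{D}_h$ and noting $\eta_h^{-1}\mathcal{D}_h\subseteq\mathcal{D}_e=\mathbb{F}$ shows $\dim_{\mathbb{F}}\mathcal{D}_h=1$. The set $\mathsf{H}$ of such $h$ is a subgroup of $\mathsf{G}$ because nonzero homogeneous elements are invertible, and the rule $\eta_h\eta_{h'}=\sigma(h,h')\eta_{hh'}$ defines a $2$-cocycle $\sigma:\mathsf{H}\times\mathsf{H}\to\mathbb{F}^*$, so $\mathcal{D}\cong\mathbb{F}^{\sigma}[\mathsf{H}]$. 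Combining this identification with the elementary shift on the matrix positions reproduces exactly the asserted form of the grading.

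The main obstacle is the structural step: proving that a graded-semisimple graded-Artinian algebra splits as $M_k(\mathcal{D})$ over a graded division algebra carrying an elementary shifted grading (the graded analogue of Wedderburn--Artin), and tracking how the homogeneous components of $\mathfrak{A}$ distribute across the matrix entries so as to recover the tuple $(\theta_1,\dots,\theta_k)$. The hypotheses that $\mathbb{F}$ be algebraically closed of characteristic zero enter precisely at the identification of $\mathcal{D}$ — they force $\mathcal{D}_e=\mathbb{F}$ and hence the one-dimensionality of every homogeneous component $\mathcal{D}_h$ — while the remaining verifications (the cocycle identity, the subgroup property of $\mathsf{H}$, and the compatibility of degrees) are routine once the decomposition $M_k(\mathbb{F}^{\sigma}[\mathsf{H}])$ is in place.
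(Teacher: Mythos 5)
The paper does not prove this statement at all: it is imported verbatim as a lemma from Bahturin--Sehgal--Zaicev \cite{BahtSehgZaic08} (their Theorem~3), so there is no in-paper argument to compare yours against. Judged on its own, your outline is the standard proof and is essentially sound: the $(\Leftarrow)$ direction by a matrix-unit and homogeneous-unit computation (which indeed needs no hypothesis on $\mathbb{F}$), and the $(\Rightarrow)$ direction by killing the graded radical, invoking the graded Wedderburn--Artin theorem to obtain $M_k(\mathcal{D})$ with an elementary shift by $(\theta_1,\dots,\theta_k)$, and then using algebraic closedness to force $\mathcal{D}_e=\mathbb{F}$ and $\dim_{\mathbb{F}}\mathcal{D}_h\le 1$, so that $\mathcal{D}\cong\mathbb{F}^{\sigma}[\mathsf{H}]$ with $\mathsf{H}=\mathsf{Supp}(\mathcal{D})$.

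Two caveats. First, your opening reduction is false as stated: the support of a finite-dimensional graded algebra is finite, but the subgroup it generates need not be (take $\mathsf{G}=\mathbb{Z}$ and support $\{0,1\}$). Fortunately you never actually use finiteness of $\mathsf{G}$; what you need is only that $\mathsf{H}=\mathsf{Supp}(\mathcal{D})$ is a finite subgroup, and this follows because the support of a graded division algebra is automatically a subgroup and $\dim_{\mathbb{F}}\mathcal{D}<\infty$. Second, the entire structural content is delegated to the graded Wedderburn--Artin theorem, which you flag as the main obstacle but do not prove; that is legitimate if you may cite it (see \cite{ElduKoch13} or \cite{NastOyst04}), but it makes your argument a reduction of one classical theorem to another rather than a self-contained proof --- the original argument in \cite{BahtSehgZaic08} instead works directly with a minimal graded left ideal and its graded endomorphism algebra. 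A last minor point: characteristic zero plays no role in the step where you identify $\mathcal{D}$; only algebraic closedness is used there, so your closing sentence slightly misattributes where the hypotheses enter.
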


We call ``\textit{the elementary-canonical $\mathsf{G}$-grading on $\mathfrak{B}$ defined by $\theta$}'' the $\mathsf{G}$-grading on $\mathfrak{B}=M_{k}(\mathbb{F}^{\sigma}[\mathsf{H}])$ defined in Lemma \ref{Bahtteo3}, where $\theta=(\theta_1,\dots,\theta_k)$ is a $k$-tuple of $\mathsf{G}^k$.

In \cite{Svir11}, a graded version of Wedderburn-Malcev Theorem was presented by I. Sviridova. She showed the following:
\begin{lemma}[Lemma 2, \cite{Svir11}]\label{teoIrina03}
	Let $\mathsf{G}$ be any finite abelian group. Any finite dimensional $\mathsf{G}$-graded $\mathbb{F}$-algebra $\mathfrak{A}$ is isomorphic as $\mathsf{G}$-graded algebra to a $\mathsf{G}$-graded $\mathbb{F}$-algebra of the form 
	\begin{equation}\label{3.08}
		\mathfrak{A}' = \left(M_{k_1}(\mathbb{F}^{\sigma_1}[\mathsf{H}_1]) \times \cdots \times M_{k_p}(\mathbb{F}^{\sigma_p}[\mathsf{H}_p])\right) \oplus \mathsf{J} \ .
	\end{equation}
	Here the Jacobson radical $\mathsf{J} = \mathsf{J}(\mathfrak{A})$ of $\mathfrak{A}$ is a graded ideal, and $\mathfrak{B} = M_{k_1}(\mathbb{F}^{\sigma_1}[\mathsf{H}_1]) \times \cdots \times M_{k_p}(\mathbb{F}^{\sigma_p}[\mathsf{H}_p])$ (direct product of algebras) is the maximal graded semisimple subalgebra of $\mathfrak{A}'$, $p\in\mathbb{N}\cup\{0\}$. The $\mathsf{G}$-grading on $\mathfrak{B}_l = M_{k_l} (\mathbb{F}^{\sigma_l} [\mathsf{H}_l])$ is the elementary-canonical grading corresponding to some $k_l$-tuple $(\theta_{l_1},\dots,\theta_{l_{k_l}})\in \mathsf{G}^{k_l}$, where $\mathsf{H}_l$ is a subgroup $\mathsf{G}$ and $\sigma\in\mathsf{Z}^2(\mathsf{H}_l,\mathbb{F}^*)$ is a $2$-cocycle.
\end{lemma}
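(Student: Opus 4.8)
The plan is to follow the classical Wedderburn--Malcev strategy while keeping every step compatible with the grading, exploiting the fact that over an algebraically closed field of characteristic zero a grading by a finite abelian group $\mathsf{G}$ is the same datum as an action of the character group $\widehat{\mathsf{G}}$ by algebra automorphisms. Indeed, since $\mathbb{F}$ is algebraically closed of characteristic zero it contains all $|\mathsf{G}|$-th roots of unity, so $\widehat{\mathsf{G}}\cong\mathsf{G}$ and the decomposition $\Gamma$ is recovered as the simultaneous eigenspace decomposition of the $\widehat{\mathsf{G}}$-action (Pontryagin duality). A subspace is graded precisely when it is $\widehat{\mathsf{G}}$-invariant, and this is the dictionary I would use throughout.

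First I would show that the Jacobson radical $\mathsf{J}=\mathsf{J}(\mathfrak{A})$ is a graded ideal. As $\mathsf{J}$ is a characteristic ideal, it is invariant under every algebra automorphism, in particular under the $\widehat{\mathsf{G}}$-action; by the dictionary above this makes it a sum of homogeneous components, hence graded. Next I would analyse $\bar{\mathfrak{A}}=\mathfrak{A}/\mathsf{J}$, which inherits a $\mathsf{G}$-grading and has zero radical. As an ungraded algebra over the algebraically closed field $\mathbb{F}$ it is $\prod_i M_{n_i}(\mathbb{F})$; the induced $\widehat{\mathsf{G}}$-action permutes the minimal central idempotents, and collecting the blocks in each orbit produces graded ideals $\mathfrak{C}_1,\dots,\mathfrak{C}_p$ with $\bar{\mathfrak{A}}=\mathfrak{C}_1\times\cdots\times\mathfrak{C}_p$ and each $\mathfrak{C}_l$ being $\mathsf{G}$-simple (a single-orbit piece admits no proper graded ideal). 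Lemma \ref{Bahtteo3} then identifies each $\mathfrak{C}_l$ with $M_{k_l}(\mathbb{F}^{\sigma_l}[\mathsf{H}_l])$ carrying an elementary-canonical grading, which is exactly the semisimple factor appearing in \eqref{3.08}.

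The remaining, and hardest, step is the \emph{graded lifting}: producing a graded semisimple subalgebra $\mathfrak{B}\subseteq\mathfrak{A}$ with $\mathfrak{A}=\mathfrak{B}\oplus\mathsf{J}$ and $\mathfrak{B}\cong\bar{\mathfrak{A}}$ as graded algebras. Here I would start from an ordinary Wedderburn--Malcev complement $\mathfrak{B}_0$, available because $\mathrm{char}(\mathbb{F})=0$ makes $\bar{\mathfrak{A}}$ separable, and then make it $\widehat{\mathsf{G}}$-invariant by averaging. Since $\mathsf{J}$ is $\widehat{\mathsf{G}}$-stable, each $\chi\in\widehat{\mathsf{G}}$ carries $\mathfrak{B}_0$ to another complement; by the Malcev uniqueness theorem all complements form a single orbit under conjugation by $1+\mathsf{J}$, so $\widehat{\mathsf{G}}$ acts on this conjugacy torsor. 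Because $|\widehat{\mathsf{G}}|=|\mathsf{G}|$ is invertible in $\mathbb{F}$ and the torsor is governed by the unipotent group attached to the nilpotent ideal $\mathsf{J}$, an averaging argument---equivalently, the vanishing of the pertinent graded Hochschild cohomology $H^2_{\mathrm{gr}}(\bar{\mathfrak{A}},\mathsf{J})$---yields a $\widehat{\mathsf{G}}$-fixed complement $\mathfrak{B}$, which is precisely a graded complement.

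The main obstacle is carrying out this equivariant correction cleanly: one must verify that the successive approximations modulo $\mathsf{J},\mathsf{J}^2,\dots$ terminate (using nilpotency of $\mathsf{J}$) and that averaging over $\widehat{\mathsf{G}}$ at each stage preserves the prescribed grading, so that the cocycle obstructions can be trivialised equivariantly. Once $\mathfrak{B}$ is constructed, the graded decomposition $\mathfrak{A}=\mathfrak{B}\oplus\mathsf{J}$ together with $\mathfrak{B}\cong\prod_{l}M_{k_l}(\mathbb{F}^{\sigma_l}[\mathsf{H}_l])$ gives a $\mathsf{G}$-graded isomorphism onto an algebra of the form \eqref{3.08}, as asserted.
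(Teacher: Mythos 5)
This lemma is not proved in the paper at all: it is imported verbatim as Lemma 2 of \cite{Svir11} (a graded Wedderburn--Malcev theorem due to Sviridova), so there is no internal proof to compare yours against. Judged on its own, your outline follows the standard route and is essentially sound. The dictionary between $\mathsf{G}$-gradings and $\widehat{\mathsf{G}}$-actions is legitimate here because $\mathbb{F}$ is algebraically closed of characteristic zero (the hypothesis announced just before Lemma \ref{Bahtteo3} covers Lemma \ref{teoIrina03} as well, so all roots of unity are available); the invariance of $\mathsf{J}$ under all automorphisms, the orbit decomposition of $\mathfrak{A}/\mathsf{J}$ into graded-simple factors, and the appeal to Lemma \ref{Bahtteo3} to put the elementary-canonical grading on each factor are all correct.

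The one place where your argument is only a sketch is the equivariant lifting, and there is a small misattribution there: the obstruction to making a Malcev complement $\widehat{\mathsf{G}}$-invariant is not the graded Hochschild group $H^2_{\mathrm{gr}}(\mathfrak{A}/\mathsf{J},\mathsf{J})$ (Hochschild $H^2$ governs the \emph{existence} of a complement, which is already guaranteed by separability in characteristic zero), but the first group cohomology of $\widehat{\mathsf{G}}$ with coefficients in the unipotent group $1+\mathsf{J}$, filtered by the powers of $\mathsf{J}$. Concretely, writing $\chi(\mathfrak{B}_0)=(1+j_\chi)\,\mathfrak{B}_0\,(1+j_\chi)^{-1}$, the assignment $\chi\mapsto 1+j_\chi$ is a $1$-cocycle of $\widehat{\mathsf{G}}$, and one trivialises it successively modulo $\mathsf{J}^2,\mathsf{J}^3,\dots$ by averaging, using that $|\widehat{\mathsf{G}}|$ is invertible in $\mathbb{F}$ and that each quotient $\mathsf{J}^k/\mathsf{J}^{k+1}$ is an $\mathbb{F}[\widehat{\mathsf{G}}]$-module with vanishing $H^1$; nilpotency of $\mathsf{J}$ terminates the induction. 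This is precisely Taft's theorem on invariant Wedderburn factors. With that substitution your proof closes, and it recovers the cited statement of Sviridova along what is, as far as I can tell, the same conceptual path as the original source.
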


Finally, in PI-Theory, an important problem is the well-known \textit{Specht Problem}. Originally posed by W. Specht (1950), in \cite{Spec50}, the Specht Problem asks whether \textit{any set of polynomial identities of a given algebra $\mathfrak{A}$ is a consequence of a finite number of identities of $\mathfrak{A}$}. Posteriorly, in \cite{Keme91}, A. Kemer (1991) showed that the Specht Problem has a positive solution in the variety of associative algebras of characteristic zero. Recall that \textit{the variety} (\textit{of associative $\mathbb{F}$-algebras}) defined by the system of polynomial identities $\{f_i: i\in I\}\subset\mathbb{F}\langle X \rangle$ is the class $\mathfrak{W}$ of all associative $\mathbb{F}$-algebras satisfying all the $f_i$'s, $i\in I$. For further reading on the Specht Problem, see works \cite{BeloRoweVish12}, \cite{Geno81}, \cite{Keme87,Keme91} and \cite{Popo81}. Additionally, for more details on varieties of algebras, see \cite{Dren00}, Chapter 2, or \cite{GiamZaic05}, Chapter 1. 

The next two results, due to I. Sviridova, provide a positive answer to graded version of Specht Problem. In both results, $\mathbb{F}$ is an algebraically closed field of characteristic zero, and $\mathsf{G}$ is any finite abelian group. 

\begin{lemma}[Theorem 1, \cite{Svir11}]\label{teoIrina01}
	Any $\mathsf{G} T$-ideal of $\mathsf{G}$-graded identities of a finitely generated associative $PI$-algebra over $\mathbb{F}$ graded by $\mathsf{G}$ coincides with the ideal of $\mathsf{G}$-graded identities of some finite dimensional associative $\mathsf{G}$-graded $\mathbb{F}$-algebra.
\end{lemma}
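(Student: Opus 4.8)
The plan is to prove this as a graded analogue of Kemer's representability theorem, carrying out Kemer's program for ordinary PI-algebras inside the free graded algebra $\mathbb{F}\langle X^{\mathsf{G}}\rangle$. First I would exploit the hypotheses that $\mathsf{G}$ is finite abelian and $\mathbb{F}$ is algebraically closed of characteristic zero: then $\mathbb{F}$ contains a full set of $|\mathsf{G}|$-th roots of unity, the character group $\hat{\mathsf{G}}=\mathrm{Hom}(\mathsf{G},\mathbb{F}^*)$ is isomorphic to $\mathsf{G}$, and a $\mathsf{G}$-grading on an algebra is the same datum as an action of $\hat{\mathsf{G}}$ by automorphisms (equivalently, a structure of $\mathbb{F}\hat{\mathsf{G}}$-module algebra). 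Under this dictionary the $\mathsf{G}T$-ideal of graded identities of $\mathfrak{A}$ corresponds exactly to the $T$-ideal of identities of $\mathfrak{A}$ in the category of $\hat{\mathsf{G}}$-module algebras, so it suffices to establish representability in that category. Because $\mathsf{char}(\mathbb{F})=0$, every graded identity is equivalent to its full multilinearizations, so one may restrict throughout to multilinear graded polynomials and deploy the representation theory of the symmetric group together with the $\hat{\mathsf{G}}$-action.

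Next I would import the structural backbone. By Lemma \ref{teoIrina03}, every finite-dimensional $\mathsf{G}$-graded algebra decomposes, up to graded isomorphism, as $\mathfrak{B}\oplus\mathsf{J}$ with $\mathfrak{B}=\prod_l M_{k_l}(\mathbb{F}^{\sigma_l}[\mathsf{H}_l])$ a product of graded-simple blocks (classified in Lemma \ref{Bahtteo3}) and $\mathsf{J}$ the graded Jacobson radical. This is the shape that the sought finite-dimensional model $\mathfrak{C}$ must have, and it also supplies the invariants for a \emph{graded Kemer index}: to the $\mathsf{G}T$-ideal $\Gamma=\mathrm{Id}^{\mathsf{G}}(\mathfrak{A})$ one attaches a pair recording the largest number of distinct graded-simple blocks that can be \emph{activated} by graded Capelli-type polynomials without the product becoming an identity, together with the corresponding bound on the nilpotency degree of $\mathsf{J}$.

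The core of the proof is the graded version of Kemer's two lemmas. I would prove a graded First Kemer Lemma, that $\Gamma$ has the same graded Kemer index as the graded identities of some finite-dimensional graded algebra, and a graded Second Kemer Lemma producing \emph{graded Kemer polynomials}: multilinear graded polynomials that are non-identities of an appropriate finite-dimensional algebra but lie in every $\mathsf{G}T$-ideal of strictly larger index. With these in hand the argument proceeds by Noetherian induction on the graded Kemer index, the ascending chain condition on $\mathsf{G}T$-ideals being the graded Specht property obtained inside this same induction; one peels off the top graded stratum, replaces it by an explicit finite-dimensional graded algebra built from the blocks of Lemma \ref{teoIrina03}, and invokes a ``Phoenix'' property to control consequences. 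Assembling the finitely many finite-dimensional pieces into a single finite-dimensional $\mathsf{G}$-graded algebra $\mathfrak{C}$ with $\mathrm{Id}^{\mathsf{G}}(\mathfrak{C})=\Gamma$ completes the proof.

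I expect the main obstacle to be the graded Second Kemer Lemma, that is, controlling the radical $\mathsf{J}$: one must adapt the Zubrilin--Razmyslov/Capelli machinery that bounds the nilpotency of the radical so that it respects the $\hat{\mathsf{G}}$-action, guaranteeing that the combinatorial substitutions underlying the Kemer polynomials can be carried out homogeneously. A secondary delicate point is verifying that the grading-to-action dictionary is an exact equivalence of $T$-ideal lattices, so that finite-dimensionality and representability transfer back to the graded category without loss.
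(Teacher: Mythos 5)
This statement is not proved in the paper at all: it is quoted verbatim, with attribution, as Theorem~1 of Sviridova's article \cite{Svir11}, and the paper uses it as a black box. So there is no internal proof to compare your attempt against; the only meaningful comparison is with the strategy of the cited source. At that level your outline is the right roadmap and essentially the one followed in the literature: use that $\mathsf{G}$ is finite abelian and $\mathbb{F}$ algebraically closed of characteristic zero to translate $\mathsf{G}$-gradings into $\hat{\mathsf{G}}$-actions, reduce to multilinear polynomials, use the graded Wedderburn--Malcev decomposition (the paper's Lemma \ref{teoIrina03}) and the classification of graded-simple algebras (Lemma \ref{Bahtteo3}) to define a graded Kemer index, and run Kemer's induction.

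The genuine gap is that your two central ingredients --- the graded First and Second Kemer Lemmas, and in particular the Zubrilin--Razmyslov/Capelli control of the radical compatible with the $\hat{\mathsf{G}}$-action --- are named but not proved, and they are precisely where all the difficulty of the theorem lives; as written, the argument assumes a graded Kemer theory that is logically equivalent in depth to the statement being proved. Two smaller points: (i) the representability induction for finitely generated algebras runs on the well-ordered set of Kemer indices and does not require the ascending chain condition on $\mathsf{G}T$-ideals as an input, so invoking the Specht property ``inside this same induction'' risks circularity and should be excised or justified separately; (ii) the grading-to-action dictionary you flag as a ``delicate point'' is standard for finite abelian $\mathsf{G}$ over such $\mathbb{F}$, but you should verify explicitly that it identifies the lattice of $\mathsf{G}T$-ideals with the lattice of $T$-ideals of $\hat{\mathsf{G}}$-module algebras, since the whole transfer rests on it.
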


\begin{lemma}[Theorem 2, \cite{Svir11}]\label{teoIrina02}
	Any $\mathsf{G} T$-ideal of graded identities of a $\mathsf{G}$-graded associative $PI$-algebra over $\mathbb{F}$ coincides with the ideal of $\mathsf{G}$-graded identities of the $\mathsf{G}$-graded Grassmann envelope of some finite dimensional over $\mathbb{F}$ associative $\mathsf{G} \times \mathbb{Z}_2$-graded algebra.
\end{lemma}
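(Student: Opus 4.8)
The plan is to transpose A. Kemer's representability program to the $\mathsf{G}$-graded setting, using the graded building blocks already assembled in this section. Write $\Gamma=\mathrm{Id}_{\mathsf{G}}(\mathfrak{A})$ for the $\mathsf{G}T$-ideal of graded identities of the given $\mathsf{G}$-graded $PI$-algebra $\mathfrak{A}$; the goal is to produce a finite dimensional $\mathsf{G}\times\mathbb{Z}_2$-graded $\mathbb{F}$-algebra $A$ with $\Gamma=\mathrm{Id}_{\mathsf{G}}(\mathsf{E}(A))$, where $\mathsf{E}(A)=(A^{(\bar0)}\otimes\mathsf{E}_0)\oplus(A^{(\bar1)}\otimes\mathsf{E}_1)$ is the Grassmann envelope built from the superdecomposition $A=A^{(\bar0)}\oplus A^{(\bar1)}$ and the natural $\mathbb{Z}_2$-grading $\mathsf{E}=\mathsf{E}_0\oplus\mathsf{E}_1$ of the Grassmann algebra. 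The first step is to set up the \emph{Grassmann envelope dictionary}: since $\mathsf{char}(\mathbb{F})=0$, the $\mathsf{G}T$-ideal $\Gamma$ is determined by its multilinear components, and each multilinear $\mathsf{G}$-graded polynomial evaluated on $\mathsf{E}(A)$ corresponds, after collecting the anticommutation (Koszul) signs of the odd Grassmann generators, to a multilinear $\mathsf{G}\times\mathbb{Z}_2$-graded evaluation on $A$. This yields a faithful correspondence between multilinear $\mathsf{G}$-graded identities of $\mathsf{E}(A)$ and multilinear $\mathsf{G}\times\mathbb{Z}_2$-graded identities of $A$, reducing the problem to exhibiting a finite dimensional super model $A$ on the $\mathsf{G}\times\mathbb{Z}_2$-graded side.

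Next I would introduce the graded Kemer invariants of $\Gamma$. Because $\mathfrak{A}$ is $PI$, Lemma \ref{teoBergCoheBaht} guarantees an ordinary polynomial identity for $\mathfrak{A}$, whence the sequence of multilinear $\mathsf{G}$-graded codimensions of $\Gamma$ is exponentially bounded. This allows one to define the graded Kemer index of $\Gamma$ --- a pair recording the maximal number of disjoint ``large'' folds of $\mathsf{G}$-homogeneous variables on which a non-identity of $\Gamma$ can be made alternating, together with the stabilised number of ``small'' folds --- and the associated graded Kemer polynomials, the multilinear polynomials outside $\Gamma$ attaining this index. The essential structural properties to verify here are that the index is finite and that Kemer polynomials enjoy the inheritance (``Phoenix'') property under the substitutions relevant to the $\mathsf{G}$-grading, so that the index is a genuine invariant of the $\mathsf{G}T$-ideal rather than of a particular generating algebra.

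The construction of $A$ then proceeds from the finitely generated case. Lemma \ref{teoIrina01} furnishes, for each finitely generated relatively free quotient of $\mathbb{F}\langle X^{\mathsf{G}}\rangle/\Gamma$, a finite dimensional $\mathsf{G}$-graded algebra with the same graded identities, and the graded Wedderburn--Malcev decomposition of Lemma \ref{teoIrina03} writes each such model as a semisimple part --- a direct product of the $\mathsf{G}$-simple blocks $M_{k_l}(\mathbb{F}^{\sigma_l}[\mathsf{H}_l])$ classified in Lemma \ref{Bahtteo3} --- plus a nilpotent graded radical. The heart of the proof is to weld these finite dimensional pieces into a single finite dimensional $\mathsf{G}\times\mathbb{Z}_2$-graded algebra $A$: one equips each block and the radical with a $\mathbb{Z}_2$-grading chosen so that the odd part absorbs exactly the alternating (``Grassmann'') behaviour measured by the graded Kemer index, and one truncates to a finite dimensional model of the correct index using the codimension bound from the previous paragraph.

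Finally, I would prove $\mathrm{Id}_{\mathsf{G}}(\mathsf{E}(A))=\Gamma$ by a two-sided inclusion. The inclusion $\Gamma\subseteq\mathrm{Id}_{\mathsf{G}}(\mathsf{E}(A))$ is the easier half: $A$ is assembled from finite dimensional quotients of the relatively free algebra of $\Gamma$, so every element of $\Gamma$ vanishes on $\mathsf{E}(A)$ through the dictionary of the first step. The reverse inclusion is the crux: one must show that every multilinear $\mathsf{G}$-graded polynomial outside $\Gamma$ fails to vanish on $\mathsf{E}(A)$, which via the Kemer bookkeeping amounts to proving that $\mathsf{E}(A)$ realises the graded Kemer index of $\Gamma$ and supports all its Kemer polynomials. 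I expect this reverse inclusion to be the main obstacle: it is the exact graded analogue of Kemer's deepest theorem, and it demands both the delicate combinatorics of alternation on $\mathsf{G}$-homogeneous multilinear variables and a careful separation of semisimple and radical contributions inside the super structure, so that the finite dimensional super model is provably ``large enough'' to detect every non-identity of $\Gamma$.
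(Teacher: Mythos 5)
This statement is not proved in the paper at all: it is quoted verbatim as Theorem~2 of \cite{Svir11} (with the parallel non-abelian result of \cite{AljaBelo10} also cited), so there is no in-paper argument to compare yours against. What you have written is a faithful high-level roadmap of the strategy actually used in the cited source --- superization via the Grassmann envelope, reduction of the $\mathsf{G}T$-ideal to its multilinear part in characteristic zero, graded Kemer index and Kemer polynomials, graded Wedderburn--Malcev (Lemmas \ref{Bahtteo3} and \ref{teoIrina03}) to build the finite dimensional super model, and a two-sided inclusion at the end. In that sense the approach is the right one and not a different route.

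However, as a proof it has genuine gaps, and you flag the largest one yourself. The finiteness of the graded Kemer index, the Phoenix (inheritance) property of graded Kemer polynomials, the welding of the semisimple blocks and the radical into a single $\mathsf{G}\times\mathbb{Z}_2$-graded algebra of the correct index, and above all the reverse inclusion $\mathrm{Id}_{\mathsf{G}}(\mathsf{E}^{\mathsf{G}}(A))\subseteq\Gamma$ are not deferrable details: they \emph{are} the theorem, and each requires a substantial argument (this is the graded analogue of the deepest part of Kemer's representability theory). There is also a logical subtlety you gloss over: Lemma \ref{teoIrina01} applies only to finitely generated algebras, while the algebra here is arbitrary, so one must first pass from $\mathfrak{A}$ to a finitely generated $\mathsf{G}\times\mathbb{Z}_2$-graded algebra whose Grassmann envelope has the same $\mathsf{G}$-graded identities --- a nontrivial step in its own right that your outline assumes rather than establishes. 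A correct plan, but not a proof; for this paper the honest move is what the author does, namely cite \cite{Svir11}.
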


Recall that \textit{Grassmann Envelope} of a $(\mathsf{G}\times\mathbb{Z}_2)$-graded $\mathbb{F}$-algebra $\mathfrak{A}$, denoted by $\mathsf{E}^\mathsf{G}(\mathfrak{A})$, is defined by 
\begin{equation*}
	\mathsf{E}^\mathsf{G}(\mathfrak{A})=\left(\mathfrak{A}_0\otimes \mathsf{E}_0 \right) \oplus \left(\mathfrak{A}_1\otimes \mathsf{E}_1 \right) ,
\end{equation*}
where $\mathsf{E}=\mathsf{E}_0\oplus \mathsf{E}_1$ is an infinitely generated non-unitary Grassmann algebra with its natural $\mathbb{Z}_2$-grading, i.e.  $\mathsf{E}$ is the $\mathbb{F}$-algebra generated by elements $e_1,e_2,e_3,\dots$, such that $e_ie_j=-e_je_i$, for all $i,j\in\mathbb{N}$, where $\mathsf{char}(\mathbb{F})=0$, and the $\mathbb{Z}_2$-grading on $\mathsf{E}$ is given by $\mathsf{E}_0=\mathsf{span}_\mathbb{F}\{e_{i_1}e_{i_2}\cdots e_{i_n}: n \mbox{ is even}\}$, and $\mathsf{E}_1=\mathsf{span}_\mathbb{F}\{e_{j_1}e_{j_2}\cdots e_{j_m}: m \mbox{ is odd}\}$.

It is worth noting that, in \cite{AljaBelo10}, E. Aljadeff and A. Kanel-Belov (2010) showed, independently to I. Sviridova, a result similar to the above, without requiring the group $\mathsf{G}$ to be abelian. They proved that ``\textit{if $\mathsf{G}$ is a finite group and $W$ is a $\mathsf{G} PI$-graded algebra over $\mathbb{F}$, $\mathsf{char}(\mathbb{F})=0$, then there is a field extension $\mathbb{K}$ of $\mathbb{F}$ and a finite-dimensional $(\mathsf{G}\times \mathbb{Z}_2)$-graded algebra $\mathfrak{A}$ over $\mathbb{K}$ such that $\mathsf{T}^\mathsf{G}(W)=\mathsf{T}^\mathsf{G}(\mathsf{E}^\mathsf{G}(\mathfrak{A}))$}''.

%
%
\subsection{First Results}
Let us begin the study of the graded algebras which satisfy some graded polynomial identity of degree $2$. A polynomial $g=g(x_1, \dots, x_n)$ of $\mathbb{F}\langle X \rangle$ has degree $2$ when it is of the form 
\begin{equation*}
	\begin{split}
g(x_1, \dots, x_n)=\sum_{r,s, k=1}^n \lambda_{rs}x_r x_s + \gamma_k x_k \ ,
	\end{split}
\end{equation*}
where $\lambda_{rs}, \gamma_k\in\mathbb{F}$, with $\lambda_{rs}\neq0$ for some $r,s$. Analogously a graded polynomial $f=f(y_1^{(\xi_1)}, \dots, y_n^{(\xi_m)})$ of $\mathbb{F}\langle X^{\mathsf{G}} \rangle$ has degree $2$ when it is of the form 
\begin{equation*}
	\begin{split}
f(y_1^{(\xi_1)}, \dots, y_m^{(\xi_m)})=\sum_{r,s, k=1}^m \delta_{rs}y_r^{(\xi_r)} y_s^{(\xi_s)} + \theta_k y_k^{(\xi_k)} \ ,
	\end{split}
\end{equation*}
where $\delta_{rs}, \theta_k\in\mathbb{F}$, with $\delta_{rs}\neq0$ for some $r,s$.

\begin{lemma}\label{3.13}
Let $\mathsf{G}$ be a group, $\mathbb{F}$ a field with $|\mathbb{F}|>2$, and $\mathfrak{A}$ an $\mathbb{F}$-algebra with a $\mathsf{G}$-grading $\Gamma$. If $\mathfrak{A}$ satisfies a graded polynomial identity $g=g(x^{(\xi_1)}_1, \dots, x^{(\xi_n)}_n)\in\mathbb{F}\langle X^{\mathsf{G}} \rangle$ of degree $2$, then $\mathfrak{A}$ satisfies a graded polynomial identity of degree $2$ of the form 
	\begin{equation}\label{3.02}
	\begin{split}
\widehat{g}(x^{(\xi_1)}_1, \dots, x^{(\xi_n)}_n)=\sum_{1\leq r<s\leq n} \gamma_{rs} \left[x^{(\xi_r)}_r, x^{(\xi_s)}_s \right]_{\mathsf{f_{G}}}
+\sum_{1\leq k \leq n} \delta_{k}\left(x_k^{(\xi_k)} \right)^2 \ ,
	\end{split}
	\end{equation}
where $\gamma_{rs},\delta_{k}\in\mathbb{F}$ and $[\ , \ ]_{\mathsf{f_G}}$ is an $\mathsf{f_{G}}$-commutator of $\mathfrak{A}$ which depends of $\Gamma$.
\end{lemma}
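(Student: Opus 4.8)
The plan is to reduce $g$ to its homogeneous quadratic part and then, rather than rewriting that part term by term, to exhibit a \emph{consequence} of it that already has the shape \eqref{3.02}. First I would split $g=g_2+g_1$, where $g_2=\sum_{r,s}\delta_{rs}x_r^{(\xi_r)}x_s^{(\xi_s)}$ collects the degree-$2$ monomials (with $\delta_{rs}\neq0$ for some $r,s$, by the hypothesis that $g$ has degree $2$) and $g_1$ collects the linear ones. Substituting $x_k^{(\xi_k)}\mapsto\lambda\,x_k^{(\xi_k)}$ for a scalar $\lambda\in\mathbb{F}$ turns $g\equiv_{\mathsf{G}}0$ into $\lambda^2 g_2+\lambda g_1\equiv_{\mathsf{G}}0$; evaluating at two distinct nonzero values of $\lambda$, which exist precisely because $|\mathbb{F}|>2$, and solving the resulting Vandermonde system forces $g_2\equiv_{\mathsf{G}}0$ on its own. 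From here on I may therefore assume that $\mathfrak{A}$ satisfies the nonzero homogeneous quadratic graded identity $g_2$.

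The conceptually central observation is that \eqref{3.02} only has to hold for \emph{some} choice of the scalars $\gamma_{rs},\delta_k$ and of the map $\mathsf{f_G}$, so it suffices to produce any identity of that shape lying among the graded identities of $\mathfrak{A}$. I would emphasize that a naive term-by-term rewriting of $g_2$ need not work: since $\mathsf{f_G}$ depends only on the pair of degrees $(\xi_r,\xi_s)$ while the $\delta_{rs}$ depend on the index pair, two variables of \emph{equal} degree whose coefficients are symmetric, as in $x^{(\xi)}y^{(\xi)}+y^{(\xi)}x^{(\xi)}$, can never be matched by a single $\mathsf{f_G}$-commutator (an equal-degree commutator is forced antisymmetric). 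The remedy is to pass to consequences obtained by specializing homogeneous variables, which stay inside the graded identities of $\mathfrak{A}$.

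Concretely, I would isolate one nonzero coefficient of $g_2$ and annihilate the rest by setting the other variables to $0$. If some diagonal coefficient $\delta_{r_0 r_0}$ is nonzero, killing every variable but $x_{r_0}^{(\xi_{r_0})}$ gives $(x_{r_0}^{(\xi_{r_0})})^2\equiv_{\mathsf{G}}0$, already of the form \eqref{3.02}. Otherwise all diagonal coefficients vanish and some off-diagonal $\delta_{r_0 s_0}\neq0$ with $r_0\neq s_0$ survives; killing the other variables leaves $h=\delta_{r_0 s_0}x_{r_0}^{(\xi_{r_0})}x_{s_0}^{(\xi_{s_0})}+\delta_{s_0 r_0}x_{s_0}^{(\xi_{s_0})}x_{r_0}^{(\xi_{r_0})}\equiv_{\mathsf{G}}0$. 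When $\xi_{r_0}\neq\xi_{s_0}$ the entries $\mathsf{f_G}(\xi_{r_0},\xi_{s_0})$ and $\mathsf{f_G}(\xi_{s_0},\xi_{r_0})$ are independent, so setting them to $\delta_{r_0 s_0}$ and $-\delta_{s_0 r_0}$ (and $\gamma_{r_0 s_0}=1$) realizes $h$ as one $\mathsf{f_G}$-commutator. When $\xi_{r_0}=\xi_{s_0}$ I would further specialize $x_{r_0}^{(\xi_{r_0})}=x_{s_0}^{(\xi_{s_0})}$: if $\delta_{r_0 s_0}+\delta_{s_0 r_0}\neq0$ this yields a nonzero multiple of a square, while if $\delta_{r_0 s_0}+\delta_{s_0 r_0}=0$ then $h=\delta_{r_0 s_0}[x_{r_0}^{(\xi_{r_0})},x_{s_0}^{(\xi_{s_0})}]$, which is $\gamma\,[\,\cdot\,,\cdot\,]_{\mathsf{f_G}}$ with $\gamma=\delta_{r_0 s_0}$ and $\mathsf{f_G}(\xi_{r_0},\xi_{r_0})=1$. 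Padding all remaining coefficients with zeros then displays an identity of the form \eqref{3.02}.

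The hard part will be exactly the equal-degree branch of the last step: there the single available value $\mathsf{f_G}(\xi,\xi)$ makes the commutator antisymmetric, so any surviving symmetric term must be converted into a square by the diagonal substitution $x_{r_0}=x_{s_0}$, and one must verify that this specialization does not collapse the identity to zero. Arranging the case split so that the outcome always lands in one of the three admissible pieces — a square, an ordinary commutator, or a genuine two-degree $\mathsf{f_G}$-commutator — is the crux of the argument; by comparison the homogenization via $|\mathbb{F}|>2$ and the book-keeping of the vanishing coefficients are routine.
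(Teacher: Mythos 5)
Your proof is correct, and it reaches the stated conclusion by a route that is close in spirit to the paper's but structurally leaner. The shared core is the equal-degree dichotomy: from the two-variable specialization $\lambda_{rs}ab+\lambda_{sr}ba=0$, either setting $a=b$ yields $(x^{(\xi)})^2\equiv_{\mathsf{G}}0$ (so the offending symmetric part is absorbed by a square term), or the coefficients are forced antisymmetric and the term is an honest commutator; your observation that a symmetric equal-degree term can never be a single $\mathsf{f_G}$-commutator is exactly the obstruction the paper's case split is built around, and your worry about the diagonal specialization ``collapsing'' is unfounded, since $(x^{(\xi)})^2$ is a nonzero polynomial of the free graded algebra. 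The differences are two. First, you kill the linear part by scaling all variables simultaneously and solving a $2\times2$ Vandermonde system in $\lambda$, whereas the paper substitutes one variable at a time and splits on $\xi_k\neq e$ (where $\lambda_{kk}a^2$ and $\gamma_k a$ lie in distinct homogeneous components) versus $\xi_k=e$ (where it scales); your version is cleaner and isolates the single use of $|\mathbb{F}|>2$. Second, and more substantively, you output only a one- or two-variable identity obtained by isolating a single nonzero coefficient and padding the remaining $\gamma_{rs},\delta_k$ with zeros, whereas the paper proves that the entire symmetrized polynomial $\sum_{r<s}(\lambda_{rs}x_rx_s+\lambda_{sr}x_sx_r)+\sum_k\lambda_{kk}(x_k)^2$ is again a graded identity (each pair identity being extracted by the same specialization you use) and then encodes all coefficients into one globally defined map $\mathsf{f_G}$. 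Your output satisfies the lemma as literally stated, but the remark following the lemma and the downstream applications (Corollary \ref{3.22}, Theorem \ref{3.20}, Proposition \ref{3.19}) tacitly use the stronger conclusion that $g$ itself can be rewritten in the form \eqref{3.02} with all of its quadratic information retained; with your proof that reassembly, for which your two-variable extraction already supplies every ingredient, would still need to be carried out.
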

\begin{proof}
Put 
$g(x^{(\xi_1)}_1, \dots, x^{(\xi_n)}_n)=\sum_{r,s, k=1}^n \lambda_{rs}x_r^{(\xi_r)} x_s^{(\xi_s)} + \gamma_k x_k^{(\xi_k)} $, 
for some  $\lambda_{rs}, \gamma_k\in\mathbb{F}$. Take any $k\in\{1,\dots, n\}$. Let us first show that either $\gamma_k=0$ or $\xi_k\notin\mathsf{Supp}(\Gamma)$. Assume that $\xi_k\in\mathsf{Supp}(\Gamma)$, and take any nonzero $a\in\mathfrak{A}_{\xi_k}$. In $g$, replacing $x^{(\xi_k)}_k$ by $a$ and $x^{(\xi_s)}_s$ by $0$ when $s\neq k$, we have 
	\begin{equation}\label{3.03}
0=g(0,\dots,0, a,0, \dots, 0)=\lambda_{kk} a^2 + \gamma_k  a \ .
	\end{equation}
So, $\lambda_{kk} a^2 + \gamma_k  a=0$ for any nonzero $a\in \mathfrak{A}_{\xi_k}$. If $\xi_k\neq e$, then $\xi_k^2\neq \xi_k$, and hence, $\lambda_{kk} a^2 = \gamma_k a=0$, for any $a\in \mathfrak{A}_{\xi_k}\setminus\{0\}$. Consequently, $\gamma_k=0$. On the other hand, suppose $\xi_k= e$. Thus, $\lambda a, (\lambda a)^2\in \mathfrak{A}_e$ for any $\lambda\in\mathbb{F}^*$ and $a\in\mathfrak{A}_e$. Hence, by \eqref{3.03}, we have that $0=\lambda_{kk} (\lambda a)^2 + \gamma_k  (\lambda a)=\lambda_{kk} \lambda^2 a^2 + \gamma_k \lambda  a$, and so $\gamma_k a=-\lambda \lambda_{kk} a^2$, for any $\lambda\in\mathbb{F}^*$ and $a\in\mathfrak{A}_e$, $a\neq0$. Because $|\mathbb{F}|>2$, we must have $\gamma_k=0$, otherwise $a=-\lambda(\gamma_k^{-1}\lambda_{kk}a^2)$ for any $\lambda\in\mathbb{F}^*$ and $a\in\mathfrak{A}_e\setminus\{0\}$, which leads to a contradiction. From this, we conclude that $\gamma_k=0$ for all $k=1,\dots,n$. 

Consider the following graded polynomial
	\begin{equation*}
	\widehat{g}(x^{(\xi_1)}_1, \dots, x^{(\xi_n)}_n)=\sum_{1\leq r<s\leq n} \left(\lambda_{rs}x_r^{(\xi_r)} x_s^{(\xi_s)}+\lambda_{sr} x_s^{(\xi_s)} x_r^{(\xi_r)}\right) + \sum_{k=1}^n \delta_{k}\left(x_k^{(\xi_k)} \right)^2\ ,
	\end{equation*}
where $\delta_k=\lambda_{kk}$ for all $k=1,\dots,n$. \textbf{Claim:} $\widehat{g}\equiv_{\mathsf{G}}0$ in $\mathfrak{A}$. In fact, first, by \eqref{3.03}, $\delta_k a^2=\lambda_{kk} a^2=0$ for any $k=1,\dots,n$ and $a\in \mathfrak{A}_{\xi_k}$. Now, fixed any $r,s\in\{1,\dots,n\}$, with $r<s$, take  $a_{\xi_r}\in\mathfrak{A}_{\xi_r}$ and  $b_{\xi_s}\in\mathfrak{A}_{\xi_s}$. In $\widehat{g}$, replacing $x^{(\xi_r)}_r$ by $a_{\xi_r}$, $x^{(\xi_s)}_s$ by $b_{\xi_s}$, and $x^{(\xi_k)}_k$ by $0$ when $k\notin\{s, r\}$, we have that
	\begin{equation*}
0=\widehat{g}(0,\dots,0,a_{\xi_r},0, \dots, 0,b_{\xi_s},0, \dots, 0)=\lambda_{rs}a_{\xi_r} b_{\xi_s}+\lambda_{sr} b_{\xi_s} a_{\xi_r}+\delta_{r}a_{\xi_r}^2+\delta_{s} b_{\xi_s}^2 \ .
	\end{equation*}
By the above, we deduce that $\delta_{r}a_{\xi_r}^2=\delta_{s}b_{\xi_s}^2=0$, and so $\lambda_{rs}a_{\xi_r} b_{\xi_s}+\lambda_{sr} b_{\xi_s} a_{\xi_r}=0$. The claim follows. 

 Suppose $\xi_r=\xi_s\in\mathsf{Supp}(\Gamma)$ for $r\neq s$. Observe that if $x_r^{(\xi_r)}x_s^{(\xi_s)}\equiv_{\mathsf{G}}0$ in $\mathfrak{A}$, then we can assume $\delta_r\neq0$ (or $\delta_r\neq0$) in $\widehat{g}$. Conversely, if $(x_r^{(\xi_r)})^2\not\equiv_{\mathsf{G}}0$ in $\mathfrak{A}$, then we must have $\lambda_{rs}=-\lambda_{sr}$ (and $\lambda_{rr}=\lambda_{ss}=0$), and hence, $\lambda_{rs}(x_r^{(\xi_r)} x_s^{(\xi_s)}- x_s^{(\xi_s)} x_r^{(\xi_r)})\equiv_{\mathsf{G}}0$ in $\mathfrak{A}$.

Finally, let $\{{i_1},\dots,{i_m}\}$ be the smallest subset of $\{1,\dots,n\}$ such that $\{\xi_{i_1},\dots,\xi_{i_m}\} = \{\xi_1,\dots,\xi_n\}$ (and so $\xi_{i_r}\neq\xi_{i_s}$ for $r\neq s$), and define the map $\mathsf{f_G}$ from $\mathsf{G}\times\mathsf{G}$ in $\mathbb{F}$ as follows:
\begin{equation*}
	\mathsf{f_G}(\xi,\zeta)=\left\{
	\begin{array}{c l}	
		\lambda_{i_r i_s} \ ,& \mbox{if } (\xi,\zeta)=(\xi_{i_r},\xi_{i_s}) \mbox{ with } i_r<i_s \\
		-\lambda_{i_s i_r} \ ,& \mbox{if } (\xi,\zeta)=(\xi_{i_s},\xi_{i_r}) \mbox{ with } i_s < i_r \\
		\lambda_{rs} \ , & \mbox{if } \xi=\zeta=\xi_r=\xi_s \mbox{ and } (x^{(\xi)})^2\not\equiv_{\mathsf{G}}0 \mbox{ in } \mathfrak{A} \\
		0 \ , & \mbox{if either } \xi=\zeta \mbox{ and }  (x^{(\xi)})^2\equiv_{\mathsf{G}}0 \mbox{ in } \mathfrak{A} \\
			  &  \hspace{3.2mm} \mbox{or } \{\xi,\zeta\} \not\subset \{\xi_1,\dots,\xi_n\}
	\end{array}\right. \ .
\end{equation*}
Hence, the polynomial $\displaystyle\widetilde{g}(x^{(\xi_1)}_1, \dots, x^{(\xi_n)}_n)= \sum_{1\leq r<s\leq n} \left[x_r^{(\xi_r)} , x_s^{(\xi_s)}\right]_{\mathsf{f_{G}}} + \sum_{1\leq k \leq n} \delta_{k}\left(x_k^{(\xi_k)} \right)^2$ is a graded polynomial identity for $\mathfrak{A}$. The result follows.
\end{proof}

In the proof of the previous lemma, observe that the polynomial $[x^{(\xi_r)},y^{(\xi_s)}]_{\mathsf{f_{G}}}$ is also zero when $\lambda_{rs}=\lambda_{sr}=0$ for some $r,s\in\{1,\dots,n\}$. Furthermore, when $\xi_i=\xi_j=e$ and $\lambda_{ij}=\lambda_{ji}\neq0$ for some $i,j\in\{1,\dots,n\}$, and $\mathfrak{A}_e$ is not nil of index $2$ (see Lemma \ref{mardua4.7}), we can conclude that $[x^{(e)},y^{(e)}]_{\mathsf{f_G}}=\lambda[x^{(e)},y^{(e)}]$ and $[x^{(e)},y^{(e)}]\equiv_{\mathsf{G}}0$ in $\mathfrak{A}$, where $\lambda=\lambda_{ij}$.

\begin{remark}
Throughout this text, from now on, let us assume that, for any graded polynomial identity $g=g(x^{(\xi_1)}_1, \dots, x^{(\xi_n)}_n)\in\mathbb{F}\langle X^{\mathsf{G}} \rangle$ of degree $2$ of a given algebra $\mathfrak{A}$ with a $\mathsf{G}$-grading $\Gamma$, the $\xi_r$'s belong to support of $\Gamma$. So let us assume also that not all $\gamma_{rs}$'s or $\delta_k$'s in \eqref{3.02}, in Lemma \ref{3.13}, are zero.
\end{remark}

By Lemma \ref{mardua4.7}, a ring $\mathfrak{R}$ with a finite $\mathsf{G}$-grading is nilpotent when its neutral component $\mathfrak{R}_e$ is nil of index $2$ and $\mathsf{char}(\mathfrak{R}_e)\neq2$. Hence, under the same assumptions of Lemma \ref{3.13}, and adding the conditions ``$\mathfrak{A}$ has finite grading'' and ``$\mathsf{char}(\mathbb{F})\neq2$'', if $\mathfrak{A}$ is not nilpotent, then $b^2\neq0$ for some $b\in\mathfrak{A}_e$, and hence, the polynomial $\widehat{g}$ in \eqref{3.02} can be rewritten as 
\begin{equation*}
	\begin{split}
\widehat{g}(x^{(\xi_1)}_1, \dots, x^{(\xi_n)}_n)=\sum_{1\leq r<s\leq n} \gamma_{rs} \left[x^{(\xi_r)}_r, x^{(\xi_s)}_s \right]_{\mathsf{f_G}}
+\sum_{\xi_k\in\mathsf{G}\setminus \{e\}} \delta_{k}\left(x_k^{(\xi_k)} \right)^2 \ ,
	\end{split}
\end{equation*}
since the equality \eqref{3.03}, when $\xi_r=e$ and $a\in\mathfrak{A}_e\setminus\{0\}$, only is possible if $\delta_{r}=\lambda_{rr}=0$. 

A special type of graded polynomial of degree $2$ is given by a graded polynomial $g\in \mathbb{F}\langle X^{\mathsf{G}}\rangle$ whose monomials are $\mathsf{G}$-homogeneous of degree $e$, i.e. all the monomials of $g$ belong to $(\mathbb{F}\langle X^{\mathsf{G}}\rangle)_e$ (see Definition \ref{3.26}). With this in mind, we have the following result:

\begin{corollary}\label{3.22}
Let $\mathsf{G}$ be a group, $\mathbb{F}$ a field with $|\mathbb{F}|>2$, $\mathfrak{A}$ an $\mathbb{F}$-algebra with a $\mathsf{G}$-grading $\Gamma$. Suppose that $\mathfrak{A}$ satisfies a graded polynomial identity $g=g(x^{(\xi_1)}_1, \dots, x^{(\xi_n)}_n)\in\mathbb{F}\langle X^{\mathsf{G}}\rangle$ of degree $2$. If $g$ is homogeneous of degree $e$, then $\mathfrak{A}$ satisfies a graded polynomial identity of the form 
\begin{equation*}
	\begin{split}
\widehat{g}(x^{(\xi_1)}_1, \dots, x^{(\xi_n)}_n;y^{(\xi_1^{-1})}_1, \dots, y^{(\xi_n^{-1})}_n)=\sum_{1\leq r \leq n} \lambda_{r} \left[x^{(\xi_r)}_r, y^{(\xi_r^{-1})}_r \right]_{\mathsf{f_{G}}}
+\sum_{\substack{1\leq k \leq n \\ \mathsf{o}(\xi_k)=2}} \delta_{k}\left(x_k^{(\xi_k)} \right)^2 \ ,
	\end{split}
\end{equation*}
where $\lambda_{r}, \delta_k\in\mathbb{F}$ and $[\ , \ ]_{\mathsf{f_G}}$ is an $\mathsf{f_{G}}$-commutator of $\mathfrak{A}$ which depends of $\Gamma$. In addition, if $\mathsf{char}(\mathbb{F})\neq2$, $\mathfrak{A}$ is not nilpotent and $\mathsf{G}$ has odd order, then $\widehat{g}$ can be rewritten as 
\begin{equation*}
\widehat{g}(x^{(\xi_1)}_1, \dots, x^{(\xi_n)}_n;y^{(\xi_1^{-1})}_1, \dots, y^{(\xi_n^{-1})}_n)=\sum_{r=1}^n \lambda_{r} \left[x^{(\xi_r)}_r, y^{(\xi_r^{-1})}_r \right]_{\mathsf{f_{G}}} \ .
\end{equation*}
\end{corollary}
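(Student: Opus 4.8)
The plan is to specialize the normal form of Lemma~\ref{3.13} to the $\mathsf{G}$-homogeneous situation, exploiting a feature of its proof: when all variables but one or two are set to $0$, each quadratic monomial of a degree-$2$ identity gets isolated, so in the normal form \eqref{3.02} \emph{every} summand is \emph{individually} a graded identity. Concretely, the proof of Lemma~\ref{3.13} already yields $\delta_k\left(x_k^{(\xi_k)}\right)^2\equiv_{\mathsf{G}}0$ for each $k$ and $\gamma_{rs}\left[x_r^{(\xi_r)},x_s^{(\xi_s)}\right]_{\mathsf{f_G}}\equiv_{\mathsf{G}}0$ for each pair $r<s$. This term-by-term vanishing is what makes the reorganization below legitimate, since I am then free to drop, reorient, or recombine summands without destroying the identity.

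First I would apply Lemma~\ref{3.13} to obtain a graded identity $\widehat g$ of the form \eqref{3.02}. Since $g$ is $\mathsf{G}$-homogeneous of degree $e$, so is its quadratic part, hence so is $\widehat g$: the monomial $x_r^{(\xi_r)}x_s^{(\xi_s)}$ has degree $\xi_r\xi_s$ and the monomial $\left(x_k^{(\xi_k)}\right)^2$ has degree $\xi_k^2$. Imposing degree $e$ forces $\gamma_{rs}=0$ unless $\xi_s=\xi_r^{-1}$, and $\delta_k=0$ unless $\xi_k^2=e$, i.e. unless $\mathsf{o}(\xi_k)\in\{1,2\}$. For a surviving square with $\mathsf{o}(\xi_k)=1$ (that is, $\xi_k=e$), the term is an identity by the term-by-term vanishing noted above — indeed if $\delta_k\neq0$ then $\left(x^{(e)}\right)^2\equiv_{\mathsf{G}}0$ — so it may be deleted, leaving only squares with $\mathsf{o}(\xi_k)=2$.

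It remains to cast the surviving commutators into the indexed shape $\sum_{r}\lambda_r\left[x_r^{(\xi_r)},y_r^{(\xi_r^{-1})}\right]_{\mathsf{f_G}}$. Each surviving commutator pairs variables of inverse degrees, so it already has the shape $\left[u^{(\xi)},v^{(\xi^{-1})}\right]_{\mathsf{f_G}}$, and by the term-by-term vanishing it is itself a graded identity; by antisymmetry of $[\ ,\ ]_{\mathsf{f_G}}$ the opposite orientation is too. Thus, introducing fresh indeterminates $y_1^{(\xi_1^{-1})},\dots,y_n^{(\xi_n^{-1})}$, I would set $\lambda_r$ to the relevant coefficient whenever the inverse pair $(\xi_r,\xi_r^{-1})$ occurs in $\widehat g$ and $\lambda_r=0$ otherwise; then every $\lambda_r\left[x_r^{(\xi_r)},y_r^{(\xi_r^{-1})}\right]_{\mathsf{f_G}}$ is separately a graded identity. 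Summing these together with the surviving $\mathsf{o}(\xi_k)=2$ squares (also identities) yields a graded identity of exactly the asserted first form. I expect this reindexing to be the main obstacle: the naive attempt to merge commutators sharing a variable runs into orientation clashes (a single variable may be first in one commutator and the partner in another), and the clean way around it is precisely to exploit that each inverse-pair commutator vanishes on its own, so that no merging is needed and one term per index $r$ suffices.

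For the final assertion, suppose in addition that $\mathsf{char}(\mathbb{F})\neq2$, that $\mathfrak{A}$ is not nilpotent, and that $\mathsf{G}$ has odd order. By Lagrange's theorem the order of any element divides $|\mathsf{G}|$, which is odd, so no element has order $2$; hence the index set $\{k:\mathsf{o}(\xi_k)=2\}$ is empty and the entire square sum disappears. Combined with the deletion of the $\xi_k=e$ squares carried out above, $\widehat g$ retains no square terms, so $\widehat g=\sum_{r}\lambda_r\left[x_r^{(\xi_r)},y_r^{(\xi_r^{-1})}\right]_{\mathsf{f_G}}$. Here the hypotheses $\mathsf{char}(\mathbb{F})\neq2$ and non-nilpotency serve to place us in the setting of the remark following Lemma~\ref{3.13} (so that $\mathfrak{A}_e$ is not nil of index $2$, cf. Lemma~\ref{mardua4.7}), guaranteeing that the surviving commutator part is nontrivial and faithfully records the content of $g$.
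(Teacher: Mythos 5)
Your proposal is correct and follows essentially the same route as the paper: apply Lemma \ref{3.13}, use $\mathsf{G}$-homogeneity of degree $e$ to force $\xi_s=\xi_r^{-1}$ and $\xi_k^2=e$, and invoke the odd order of $\mathsf{G}$ together with Lemma \ref{mardua4.7} for the final reduction. Your explicit appeal to the term-by-term vanishing established in the proof of Lemma \ref{3.13} (to justify deleting the $\xi_k=e$ squares and reindexing the commutators with fresh variables $y_r^{(\xi_r^{-1})}$) only makes precise steps the paper's shorter argument leaves implicit.
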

\begin{proof}
By Lemma \ref{3.13}, we can assume that $g$ is a polynomial as in \eqref{3.02}. Since the $\mathsf{G}$-homogeneous degree of each monomial of $g$ is $e$, i.e. $x^{(\xi_r)}_r x^{(\xi_s)}_s, x^{(\xi_s)}_s x^{(\xi_r)}_r, (x_k^{(\xi_k)} )^2\in (\mathbb{F}\langle X^{\mathsf{G}}\rangle)_e$, we must have $\xi_r\xi_s=\xi_s\xi_r=\xi_k^2=e$ for all $r,s,k$. From this, $\xi_s=\xi_r^{-1}$ and $\xi_k^{-1}=\xi_k$ (i.e. $\mathsf{o}(\xi_k)=2$).

To conclude, as the order of $\mathsf{G}$ is divisible by the orders of all its elements, if $|\mathsf{G}|$ is odd, then $\mathsf{G}$ have not elements of order $2$, and hence, $\delta_k=0$ when $\xi_k\neq e$. The conclusion follows from Lemma \ref{mardua4.7}.
\end{proof}

%
%
\subsection{The graded identities of degree 2 which $M_k(\mathbb{F}^{\sigma}[\mathsf{H}])$ can satisfy}
Let $\mathfrak{A}$ be an $\mathbb{F}$-algebra with a $\mathsf{G}$-grading. Suppose that $\mathfrak{A}$ is $\mathsf{G}$-simple, has finite dimension and satisfies a graded polynomial identity $g\in\mathbb{F}\langle X^{\mathsf{G}}\rangle$ of degree $2$. Our main aim here is to deduce some properties of $g$. To this end, since Lemma \ref{Bahtteo3} ensures, for $\mathbb{F}$ an algebraically closed field of characteristic zero, that $\mathfrak{A}$ is $\mathsf{G}$-isomorphic to $\mathfrak{B}=M_{k}(\mathbb{F}^{\sigma}[\mathsf{H}])$, where $\mathfrak{B}$ is graded with an elementary-canonical $\mathsf{G}$-grading, we will study $g$ when $g\equiv_{\mathsf{G}}0$ in $M_{k}(\mathbb{F}^{\sigma}[\mathsf{H}])$.

In what follows, let us consider and study $\mathfrak{B}= M_k(\mathbb{F}^{\sigma}[\mathsf{H}])$, the $k\times k$ matrix algebra over the twisted group algebra $\mathbb{F}^{\sigma}[\mathsf{H}]$, with an elementary-canonical grading $\Gamma$ defined by a $k$-tuple $(\theta_1,\theta_2,\dots,\theta_k)\in\mathsf{G}^k$, where $\mathsf{H}$ is a subgroup of $\mathsf{G}$ and $\sigma$ is a $2$-cocycle on $\mathsf{H}$.

Relative to the order of $\mathsf{Supp}(\Gamma)$, it is easy to see that $|\mathsf{H}|\leq |\mathsf{Supp}(\Gamma)|\leq k^2|\mathsf{H}|$. Note that $k>1$ when $|\mathsf{Supp}(\Gamma)|>|\mathsf{H}|$. More precisely, if $m=\#\{\theta_1\mathsf{H},\theta_2\mathsf{H},\dots,\theta_k\mathsf{H}\}$ is the number of distinct left cosets determined by $\theta_1,\theta_2,\dots,\theta_k$, then $|\mathsf{Supp}(\Gamma)|\geq m|\mathsf{H}|$, since $E_{1i}\eta_\xi$ and $E_{1j}\eta_\zeta$ not belong to the same homogeneous component of $\mathfrak{B}$ when $\theta_i\notin \mathsf{H} \theta_j$. From this, if $\theta_i\notin \mathsf{H} \theta_j$ for all $i\neq j$, then $k|\mathsf{H}|\leq |\mathsf{Supp}(\Gamma)|\leq k^2|\mathsf{H}|$. On the other hand, if $\theta_1,\dots,\theta_k$ belong to normalizer of $\mathsf{H}$ in $\mathsf{G}$, we have that $\{\theta_i^{-1}\zeta\theta_i\in\mathsf{G}: i=1,\dots,k, \zeta\in\mathsf{H}\}=\bigcup_{i=1}^k \mathsf{H}^{\theta_i}\subseteq\mathsf{H}$, and hence, $|\mathsf{Supp}(\Gamma)|\leq (k^2-k+1)|\mathsf{H}|$. It is important to note that $\theta_i\notin \mathsf{H} \theta_j$ if and only if $\theta_i^{-1}\notin \theta_j^{-1}\mathsf{H}$. Recall that $\mathsf{H}^{\xi}=\{\zeta^\xi=\xi^{-1}\zeta\xi\in\mathsf{G}: \zeta\in\mathsf{H}\}$.

Now, consider the twisted group algebra $\mathfrak{D}=\mathbb{F}^{\sigma}[\mathsf{H}]$, for some subgroup $\mathsf{H}$ of $\mathsf{G}$ and $2$-cocycle $\sigma: \mathsf{H}\times\mathsf{H}\rightarrow \mathbb{F}^*$. Since $(\lambda\eta_\xi)(\gamma\eta_\zeta)=\lambda\gamma\sigma(\xi,\zeta)\eta_{\xi\zeta}$ for any $\xi,\zeta\in\mathsf{H}$ and $\lambda,\gamma\in\mathbb{F}$, putting $\mathsf{f}(\xi,\zeta)=\sigma(\xi,\zeta)^{-1}$ for any $\xi,\zeta\in\mathsf{H}$, we have that $[x^{(\xi)},y^{(\zeta)}]_{\mathsf{f_G}}$ is an $\mathsf{f_G}$-commutator of $\mathfrak{D}$ which depends of $\mathsf{H}$. Observe that $[\lambda\eta_\xi,\gamma\eta_\zeta]_{\mathsf{f_G}}=\lambda\gamma(\mathsf{f}(\xi,\zeta)\sigma(\xi,\zeta)\eta_{\xi\zeta}-\mathsf{f}(\zeta,\xi)\sigma(\zeta,\xi)\eta_{\zeta\xi})=\lambda\gamma(\eta_{\xi\zeta}-\eta_{\zeta\xi})$, and hence, $\mathfrak{D}$ is $\mathsf{f_G}$-commutative when $\mathsf{H}$ is abelian. Let us use this reasoning to define the $\sigma$-commutator of $\mathfrak{B}=M_k(\mathbb{F}^{\sigma}[\mathsf{H}])$. We define \textit{the $\sigma$-commutator of $\mathfrak{B}$} by the map that linearly extends the following application: 
\begin{equation*}
[E_{ij}\eta_\xi,E_{rs}\eta_\zeta]_{\sigma}=\frac{1}{\sigma(\xi,\zeta)} (E_{ij}\eta_\xi)( E_{rs}\eta_\zeta) -\frac{1}{\sigma(\zeta,\xi)} (E_{rs}\eta_\zeta) (E_{ij}\eta_\xi) \ ,
\end{equation*}
for any $\xi,\zeta\in\mathsf{H}$ and $i,j,r,s=1,2,\dots,k$.

\begin{lemma}\label{3.23}
Let $\mathfrak{B}=M_k(\mathbb{F}^{\sigma}[\mathsf{H}])$ with an elementary-canonical $\mathsf{G}$-grading $\Gamma$ defined by a $k$-tuple $(\theta_1,\dots,\theta_k)\in \mathsf{G}^k$. The following statements are true. 
	\begin{enumerate}
	\item[i)] If $[E_{ij}\eta_\xi,E_{rs}\eta_\zeta]_{\sigma}\equiv_{\mathsf{G}}0$ in $\mathfrak{B}$, then either $i\neq s$ and $j\neq r$ or $i=j=s=r$ and $\xi\zeta=\zeta\xi$;
	\end{enumerate}
Now, suppose $\theta_r\notin \mathsf{H} \theta_s $ for all $r\neq s$. 
	\begin{enumerate}
	\item[ii)] If $\mathsf{H}$ is normal in $\mathsf{G}$, then $k|\mathsf{H}|\leq |\mathsf{Supp}(\Gamma)|\leq (k^2-k+1)|\mathsf{H}|$;
	\item[iii)] If $[x^{(\xi)},y^{(\xi^{-1})}]_{\sigma}\equiv_{\mathsf{G}}0$ in $\mathfrak{B}$, for some $\xi\in\mathsf{G}$, then either $\xi\notin\mathsf{Supp}(\Gamma)$ or $\xi\in\mathsf{H}^{\theta_r}$ for some $r\in\{1,\dots,k\}$;
	\item[iv)] For any $\xi,\zeta\in \mathsf{H}$, if $\xi\zeta=\zeta\xi$, then $[x^{(\xi^{\theta_r})},y^{(\zeta^{\theta_s})}]_{\sigma}\equiv_{\mathsf{G}}0$ in $\mathfrak{B}$ for all $r,s=1,2,\dots,n$;
	\item[v)] Suppose $|\mathsf{Supp}(\Gamma)|=k|\mathsf{H}|$. If $[x^{(\xi)},y^{(\zeta)}]_{\sigma}\equiv_{\mathsf{G}}0$ in $\mathfrak{B}$ for some $\xi,\zeta\in \mathsf{G}$, then either $\{\xi,\zeta\}\not\subset \mathsf{Supp(\Gamma)}$ or $\xi,\zeta\in \mathsf{H}^{\theta_r}$ for all $r\in\{1,\dots,k\}$.
	\end{enumerate}
\end{lemma}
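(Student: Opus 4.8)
The plan is to reduce every assertion to a single explicit computation of the $\sigma$-commutator on basis elements. Since $E_{ij}E_{rs}=\delta_{jr}E_{is}$ and $\eta_\xi\eta_\zeta=\sigma(\xi,\zeta)\eta_{\xi\zeta}$, the normalizing factors $\sigma(\xi,\zeta)^{-1}$ and $\sigma(\zeta,\xi)^{-1}$ cancel exactly against the cocycle values produced by the two products, and I obtain the clean identity
\[
[E_{ij}\eta_\xi,E_{rs}\eta_\zeta]_\sigma=\delta_{jr}E_{is}\eta_{\xi\zeta}-\delta_{si}E_{rj}\eta_{\zeta\xi}.
\]
For (i) I would examine the four cases given by the two Kronecker symbols: if $j\neq r$ and $s\neq i$ both terms vanish; if exactly one of $j=r$, $s=i$ holds the result is a single nonzero matrix unit, so the commutator cannot vanish; and if $j=r$ and $s=i$ the expression becomes $E_{ii}\eta_{\xi\zeta}-E_{jj}\eta_{\zeta\xi}$, which is $0$ precisely when $i=j$ (hence $i=j=r=s$) and $\xi\zeta=\zeta\xi$. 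Reading off which cases force $0$ yields the stated dichotomy.

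Part (ii) is a counting argument built on the component description $\mathfrak{B}_\xi=\mathsf{span}_{\mathbb{F}}\{E_{ij}\eta_\zeta:\theta_i^{-1}\zeta\theta_j=\xi\}$. Within the first row the degrees $\theta_1^{-1}\mu\theta_j$ ($\mu\in\mathsf{H}$, $1\le j\le k$) are pairwise distinct, because an equality $\theta_1^{-1}\mu\theta_j=\theta_1^{-1}\mu'\theta_{j'}$ forces $\theta_j\in\mathsf{H}\theta_{j'}$, hence $j=j'$ by hypothesis and then $\mu=\mu'$; this already exhibits $k|\mathsf{H}|$ distinct support elements, giving the lower bound. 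For the upper bound I use that, when $\mathsf{H}$ is normal, every diagonal block contributes only conjugates $\mu^{\theta_i}\in\mathsf{H}$, so the $k$ diagonal blocks together cover at most $|\mathsf{H}|$ degrees, whereas each of the remaining $k^2-k$ blocks contributes at most $|\mathsf{H}|$; summing gives $(k^2-k+1)|\mathsf{H}|$.

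For (iii), since $\xi\in\mathsf{Supp}(\Gamma)$ I fix any $E_{ij}\eta_\mu\in\mathfrak{B}_\xi$. Then $E_{ji}\eta_{\mu^{-1}}\in\mathfrak{B}_{\xi^{-1}}$, as $\theta_j^{-1}\mu^{-1}\theta_i=(\theta_i^{-1}\mu\theta_j)^{-1}=\xi^{-1}$, and the master formula gives $[E_{ij}\eta_\mu,E_{ji}\eta_{\mu^{-1}}]_\sigma=(E_{ii}-E_{jj})\eta_e$. The hypothesis $[x^{(\xi)},y^{(\xi^{-1})}]_\sigma\equiv_{\mathsf{G}}0$ forces this to vanish, whence $i=j$ and $\xi=\mu^{\theta_i}\in\mathsf{H}^{\theta_i}$. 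The same row/column injectivity used in (ii) shows that, when $|\mathsf{Supp}(\Gamma)|=k|\mathsf{H}|$, the assignment $(j,\mu)\mapsto\theta_i^{-1}\mu\theta_j$ is a bijection onto $\mathsf{Supp}(\Gamma)$ for each fixed row $i$ and, symmetrically, for each fixed column, so every component $\mathfrak{B}_\delta$ with $\delta\in\mathsf{Supp}(\Gamma)$ contains exactly one basis element in each row and each column; moreover $\delta\in\mathsf{H}^{\theta_r}$ for all $r$ exactly when that distinguished element is diagonal in every row. For (v) I then assume $[x^{(\xi)},y^{(\zeta)}]_\sigma\equiv_{\mathsf{G}}0$ with $\xi,\zeta\in\mathsf{Supp}(\Gamma)$ and suppose $\xi\notin\mathsf{H}^{\theta_r}$ for some $r$; the row-$r$ element of $\mathfrak{B}_\xi$ is then some $E_{rq}\eta_\mu$ with $q\neq r$, and pairing it with the unique element $E_{q\,q'}\eta_\nu\in\mathfrak{B}_\zeta$ lying in row $q$ gives, by the formula, $E_{r q'}\eta_{\mu\nu}-\delta_{q' r}E_{qq}\eta_{\nu\mu}$, which is nonzero whether or not $q'=r$ (because $r\neq q$), contradicting vanishing. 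Hence $\xi\in\mathsf{H}^{\theta_r}$ for all $r$, and the antisymmetry of $[\ ,\ ]_\sigma$ yields the same for $\zeta$.

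Finally, (iv) follows the same template once the relevant degrees are understood. In the abelian setting standing throughout the paper one has $\mathsf{H}^{\theta_r}=\mathsf{H}$ and $\xi^{\theta_r}=\xi$, and the coset hypothesis forces any basis element of degree $\xi\in\mathsf{H}$ into a diagonal block: $\theta_i^{-1}\mu\theta_j=\xi\in\mathsf{H}$ gives $\theta_j\in\mathsf{H}\theta_i$, hence $i=j$ and $\mu=\xi$. Thus $\mathfrak{B}_\xi$ is spanned by the diagonal elements $E_{ii}\eta_\xi$, and $[E_{ii}\eta_\xi,E_{jj}\eta_\zeta]_\sigma=\delta_{ij}E_{ii}(\eta_{\xi\zeta}-\eta_{\zeta\xi})$ vanishes by (i) whenever $\xi\zeta=\zeta\xi$. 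The step I expect to be the main obstacle is precisely this diagonal reduction: for a non-abelian $\mathsf{G}$ an off-diagonal block can also carry a degree of the form $\xi^{\theta_r}$, and two such off-diagonal elements need not $\sigma$-commute, so the reduction genuinely relies on the abelianness of $\mathsf{G}$ (equivalently, on the relevant conjugates of $\mathsf{H}$ being controlled); I would therefore invoke that hypothesis explicitly when writing (iv).
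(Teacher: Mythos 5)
Your proof is correct and follows essentially the same route as the paper: everything is reduced to the single computation $[E_{ij}\eta_\xi,E_{rs}\eta_\zeta]_\sigma=\delta_{jr}E_{is}\eta_{\xi\zeta}-\delta_{si}E_{rj}\eta_{\zeta\xi}$ together with the coset-counting observations preceding the lemma, and your arguments for i), ii), iii) and v) match the paper's almost verbatim (the paper cites its item i) where you invoke the explicit formula, but the content is identical). The one place where you do more than the paper is item iv): the paper only verifies the vanishing of the commutator on the single pair $(E_{rr}\eta_\xi,E_{ss}\eta_\zeta)$ and does not address whether $\mathfrak{B}_{\xi^{\theta_r}}$ and $\mathfrak{B}_{\zeta^{\theta_s}}$ contain any other basis elements, whereas you correctly observe that the graded identity requires checking all of $\mathfrak{B}_{\xi^{\theta_r}}\times\mathfrak{B}_{\zeta^{\theta_s}}$, and that the reduction of these components to diagonal matrix units uses the hypothesis $\theta_i\notin\mathsf{H}\theta_j$ ($i\neq j$) together with commutativity of $\mathsf{G}$ (or at least control of the conjugates $\mathsf{H}^{\theta_i}$). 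Your flagged caveat is legitimate --- for non-abelian $\mathsf{G}$ the equation $\theta_i^{-1}\mu\theta_j=\theta_r^{-1}\xi\theta_r$ can admit off-diagonal solutions --- and since the paper applies this lemma only with $\mathsf{G}$ finite abelian, stating that hypothesis explicitly in iv), as you propose, is the right fix rather than a defect of your argument.
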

\begin{proof}
The items \textit{i)} and \textit{ii)} follow from the observations above;

\textit{iii)} Given any $\xi\in\mathsf{Supp}$, take $E_{ij}\eta_\zeta\in\mathfrak{B}_\xi$, and so $E_{ji}\eta_{\zeta^{-1}}\in\mathfrak{B}_{\xi^{-1}}$. From this, $[E_{ij}\eta_\zeta,E_{ji}\eta_{\zeta^{-1}}]_{\sigma}=0$, and hence, by item \textit{i)}, it follows that $i=j$. Therefore, we have that $\xi=\theta_i^{-1}\zeta\theta_i \in\mathsf{H}^{\theta_i}$.

\textit{iv)} Suppose $\xi,\zeta\in \mathsf{H}$ such that $\xi\zeta=\zeta\xi$. Fixed any $r,s\in\{1,2,\dots,k\}$, we have that $E_{rr}\eta_\xi\in\mathfrak{B}_{\xi^{\theta_r}}$ and $E_{ss}\eta_\zeta\in\mathfrak{B}_{\zeta^{\theta_s}}$. By item \textit{i)}, if $r\neq s$, it follows that $[x^{(\xi^{\theta_r})},y^{(\zeta^{\theta_s})}]_{\sigma}\equiv_{\mathsf{G}}0$ in $\mathfrak{B}$. Conversely, if $r=s$, then we have that
\begin{equation*}
	\begin{split}
[E_{rr}\eta_\xi,E_{rr}\eta_\zeta]_{\sigma} &= \frac{1}{\sigma(\xi,\zeta)} (E_{rr}\eta_\xi)( E_{rr}\eta_\zeta) -\frac{1}{\sigma(\zeta,\xi)} (E_{rr}\eta_\zeta) (E_{rr}\eta_\xi) \\
	&= \frac{1}{\sigma(\xi,\zeta)} \sigma(\xi,\zeta) E_{rr}\eta_{\xi\zeta}-\frac{1}{\sigma(\zeta,\xi)}\sigma(\zeta,\xi) E_{rr}\eta_{\zeta \xi} \\
	&=E_{rr}\eta_{\xi\zeta}- E_{rr}\eta_{\zeta \xi} =0 \ ,
	\end{split}
\end{equation*}
because $\xi\zeta=\zeta \xi$, and so $\eta_{\xi\zeta}=\eta_{\zeta \xi}$.

\textit{v)} Assume $|\mathsf{Supp}(\Gamma)|=k|\mathsf{H}|$. Since $\theta_i\notin \mathsf{H} \theta_j $ for all $i\neq j$, it is easy to see that, for each $i_0=1,\dots,k$, $\mathsf{Supp}(\Gamma)=\{\theta_{i_0}^{-1}\zeta\theta_j\in\mathsf{G}: \zeta\in\mathsf{H}, j=1,\dots,k\}$. Note that $E_{ij_1}\eta_{\zeta_1},E_{ij_2}\eta_{\zeta_2}\in\mathfrak{B}_\xi$ implies $j_1=j_2$ and $\zeta_1=\zeta_2$. Hence, for each pair $(i,\xi)\in \{1,\dots,k\}\times\mathsf{Supp}(\Gamma)$, there exists only pair $(j,\zeta)\in\{1,\dots,k\}\times \mathsf{H}$ such that $E_{ij}\eta_\zeta\in\mathfrak{B}_\xi$. Consequently, for each $\xi\in\mathsf{Supp}(\Gamma)$, there are $\zeta_{1\xi},\dots,\zeta_{k\xi} \in\mathsf{H}$ such that $\mathfrak{B}_\xi=\{E_{1j_1}\eta_{\zeta_{1\xi}},\dots,E_{kj_k}\eta_{\zeta_{k\xi}}\}$, where $j_1,\dots,j_k \in\{1,\dots,k\}$ are pairwise distinct.

Take $\xi,\zeta\in \mathsf{Supp}(\Gamma)$ such that $[x^{(\xi)},y^{(\zeta)}]_{\sigma}\equiv_{\mathsf{G}}0$ in $\mathfrak{B}$. \textbf{Claim:} $\xi,\zeta\in \mathsf{H}^{\theta_r}$ for all $r\in\{1,\dots,k\}$. In fact, by the first part of the proof of this item, for each $i_0=1,\dots,k$, we can take $E_{i_0 j}\eta_{\hat{\xi}}\in\mathfrak{B}_\xi$ and $E_{j s}\eta_{\hat{\zeta}}\in\mathfrak{B}_\zeta$. By hypotheses, it follows that $[E_{i_0 j}\eta_{\hat{\xi}},E_{j s}\eta_{\hat{\zeta}}]_{\sigma}=0$, and so, by item \textit{i)} of this proposition, we must have $j=s=i_0$. From this, $\xi=\theta_{i_0}^{-1}\hat{\xi}\theta_{i_0}$ and $\zeta=\theta_{i_0}^{-1}\hat{\zeta}\theta_{i_0}$, and so the result follows.
\end{proof}

The Example \ref{3.18} exhibits a matrix algebra (with an elementary grading) which does not satisfy the item \textit{iv)} of the previous result. Already the matrix algebra of Example \ref{3.16} does not satisfy the item \textit{v)} because the hypotheses ``$|\mathsf{Supp}(\Gamma)|=k|\mathsf{H}|$'' is not satisfied.


In Group Theory, the Lagrange's Theorem ensures that $|\mathsf{G}|=|\mathsf{H}|\cdot(\mathsf{G}:\mathsf{H})$ for any subgroup $\mathsf{H}$ of a finite group $\mathsf{G}$, where $(\mathsf{G}:\mathsf{H})$ is the index of $\mathsf{H}$ in $\mathsf{G}$. Hence, given a subgroup $\mathsf{H}$ of $\mathsf{G}$ and a $k$-tuple $(\theta_1,\dots,\theta_k)\in \mathsf{G}^k$, if $k=(\mathsf{G}:\mathsf{H})$ and $\theta_r\notin \mathsf{H} \theta_s $ for all $r\neq s$, then $\mathsf{G}=\{\theta_{r_0}^{-1}\xi\theta_s: \xi\in \mathsf{H}, s=1,\dots,k\}$, for all $r_0=1,\dots,k$. With this in mind, the following result is immediate from Lemma \ref{3.23} (and its proof).

\begin{corollary}\label{3.24}
Let $\mathfrak{B}=M_k(\mathbb{F}^{\sigma}[\mathsf{H}])$ with an elementary-canonical grading $\Gamma$ defined by a $k$-tuple $(\theta_1,\theta_2,\dots,\theta_k)\in \mathsf{G}^k$. Suppose $k=(\mathsf{G}:\mathsf{H})$ and $\theta_r\notin \mathsf{H} \theta_s $ for all $r\neq s$.
	\begin{enumerate}
	\item[i)] $(x^{(\xi)})^2\not\equiv_{\mathsf{G}}0$ and $y^{(\zeta)}z^{(\varsigma)}\not\equiv_{\mathsf{G}}0$ in $\mathfrak{B}$ for any $\xi,\zeta,\varsigma\in\mathsf{G}$;
	\item[ii)] If $\xi\notin \mathsf{H}^{\theta_i}$ for some $i\in\{1,\dots,k\}$, then $[x^{(\xi)},y^{(\zeta)}]_{\sigma}\not\equiv_{\mathsf{G}}0$ in $\mathfrak{B}$ for any $\zeta\in\mathsf{G}$;
	\item[iii)] If $[x^{(\xi)},y^{(\zeta)}]_{\sigma}\equiv_{\mathsf{G}}0$ in $\mathfrak{B}$ for any $\xi,\zeta\in \mathsf{H}$, then $\mathsf{H}$ is abelian and a normal subgroup of $\mathsf{G}$;
	\item[iv)] If $k>1$, then there is $\xi\in\mathsf{G}$ such that $[x^{(\xi)},y^{(\zeta)}]_{\sigma}\not\equiv_{\mathsf{G}}0$ in $\mathfrak{B}$ for any $\zeta\in\mathsf{G}$.
	\end{enumerate}
\end{corollary}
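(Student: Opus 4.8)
The plan is to read everything off the structure of the homogeneous components $\mathfrak{B}_\xi$ already obtained in the proof of Lemma \ref{3.23}, \textit{v)}, together with the coset identity recorded just before the statement. First I would note that the hypotheses $k=(\mathsf{G}:\mathsf{H})$ and $\theta_r\notin\mathsf{H}\theta_s$ ($r\neq s$) make $\mathsf{H}\theta_1,\dots,\mathsf{H}\theta_k$ a complete set of pairwise distinct right cosets, so $\mathsf{G}=\bigsqcup_{s=1}^{k}\mathsf{H}\theta_s$ and $\mathsf{Supp}(\Gamma)=\mathsf{G}$, whence $|\mathsf{Supp}(\Gamma)|=k|\mathsf{H}|$ and item \textit{v)} of Lemma \ref{3.23} is available. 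As in that proof, for each $\xi\in\mathsf{G}$ the rule sending a row $i$ to the unique column $j$ with $\theta_i\xi\in\mathsf{H}\theta_j$ is a permutation $\pi_\xi$ of $\{1,\dots,k\}$, and $\mathfrak{B}_\xi=\mathsf{span}_{\mathbb{F}}\{E_{i\,\pi_\xi(i)}\eta_{\zeta_i}:i=1,\dots,k\}$ with $\zeta_i=\theta_i\xi\theta_{\pi_\xi(i)}^{-1}\in\mathsf{H}$. The one dictionary entry I would isolate is that $\xi\in\mathsf{H}^{\theta_i}$ precisely when $\pi_\xi(i)=i$.

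Items \textit{i)} and \textit{iv)} are then quick. For \textit{i)}, given $\zeta,\varsigma$ the product $E_{i\,\pi_\zeta(i)}\eta_{(\cdot)}\cdot E_{\pi_\zeta(i)\,\pi_\varsigma(\pi_\zeta(i))}\eta_{(\cdot)}$ is a nonzero scalar (a value of $\sigma\in\mathbb{F}^*$) times a single basis monomial, so $y^{(\zeta)}z^{(\varsigma)}\not\equiv_{\mathsf{G}}0$; and summing the basis monomials of $\mathfrak{B}_\xi$ along one cycle of $\pi_\xi$ yields an element whose square is a sum of nonzero monomials occupying distinct rows, so $(x^{(\xi)})^2\not\equiv_{\mathsf{G}}0$. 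For \textit{iv)}, when $k>1$ we have $|\mathsf{H}^{\theta_1}|=|\mathsf{H}|<|\mathsf{G}|$, so any $\xi\in\mathsf{G}\setminus\mathsf{H}^{\theta_1}$ satisfies $\xi\notin\mathsf{H}^{\theta_1}$, and item \textit{ii)} applies.

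The core of the argument is item \textit{ii)}, which I expect to be the main obstacle, because one must exhibit a nonzero $\sigma$-commutator for \emph{every} $\zeta$ at once. If $\xi\notin\mathsf{H}^{\theta_i}$, then $j:=\pi_\xi(i)\neq i$; fixing an arbitrary $\zeta$ I would take $a=E_{ij}\eta_\alpha\in\mathfrak{B}_\xi$ and $b=E_{j\,\pi_\zeta(j)}\eta_\beta\in\mathfrak{B}_\zeta$ and use the evaluation $[E_{ij}\eta_\alpha,E_{rs}\eta_\beta]_\sigma=\delta_{jr}E_{is}\eta_{\alpha\beta}-\delta_{si}E_{rj}\eta_{\beta\alpha}$ underlying Lemma \ref{3.23}, \textit{i)}, to get $[a,b]_\sigma=E_{i\,\pi_\zeta(j)}\eta_{\alpha\beta}-\delta_{\pi_\zeta(j),i}\,E_{jj}\eta_{\beta\alpha}$. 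Since $i\neq j$, this is nonzero whether or not $\pi_\zeta(j)=i$ (in the exceptional case it is $E_{ii}\eta_{\alpha\beta}-E_{jj}\eta_{\beta\alpha}$, a difference of distinct diagonal monomials). Tying $b$ to $j=\pi_\xi(i)$ rather than to $\zeta$ is exactly what makes the argument uniform in $\zeta$.

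Finally, for item \textit{iii)} the hypothesis splits into commutativity and normality. Choosing the index $s_0$ with $\theta_{s_0}\in\mathsf{H}$ (one coset is $\mathsf{H}$ itself), for $\xi,\zeta\in\mathsf{H}$ the diagonal elements $E_{s_0s_0}\eta_\beta\in\mathfrak{B}_\xi$ and $E_{s_0s_0}\eta_\gamma\in\mathfrak{B}_\zeta$ (with $\beta=\theta_{s_0}\xi\theta_{s_0}^{-1}$ and $\gamma=\theta_{s_0}\zeta\theta_{s_0}^{-1}\in\mathsf{H}$) give $[E_{s_0s_0}\eta_\beta,E_{s_0s_0}\eta_\gamma]_\sigma=E_{s_0s_0}(\eta_{\beta\gamma}-\eta_{\gamma\beta})$, whose vanishing forces $\beta\gamma=\gamma\beta$, i.e. $\xi\zeta=\zeta\xi$; hence $\mathsf{H}$ is abelian. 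For normality I would feed the hypothesis into the contrapositive of item \textit{ii)}: each $\xi\in\mathsf{H}$ has $[x^{(\xi)},y^{(\zeta)}]_\sigma\equiv_{\mathsf{G}}0$ for some $\zeta\in\mathsf{H}$, so $\xi\in\mathsf{H}^{\theta_i}$ for all $i$; thus $\mathsf{H}\subseteq\mathsf{H}^{\theta_i}$ and, comparing finite orders, $\mathsf{H}=\mathsf{H}^{\theta_i}$ for every $i$. Writing an arbitrary $g\in\mathsf{G}$ as $g=h\theta_s$ with $h\in\mathsf{H}$ then yields $\mathsf{H}^{g}=\mathsf{H}^{\theta_s}=\mathsf{H}$, so $\mathsf{H}\trianglelefteq\mathsf{G}$.
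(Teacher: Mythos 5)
Your proposal is correct and follows essentially the same route as the paper: the paper disposes of this corollary by noting that $k=(\mathsf{G}:\mathsf{H})$ and $\theta_r\notin\mathsf{H}\theta_s$ force $\mathsf{Supp}(\Gamma)=\mathsf{G}$ with $|\mathsf{Supp}(\Gamma)|=k|\mathsf{H}|$ and then declaring the items ``immediate from Lemma \ref{3.23} and its proof,'' which is exactly the coset/permutation structure of the components $\mathfrak{B}_\xi$ that you make explicit. Your write-up merely fills in the details the paper leaves implicit (the permutation $\pi_\xi$, the uniform-in-$\zeta$ choice $b=E_{j\,\pi_\zeta(j)}\eta_\beta$ for item \textit{ii)}, and the order-comparison $\mathsf{H}=\mathsf{H}^{\theta_i}$ for normality), and all of these steps check out.
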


We now apply Lemma \ref{3.13}, Lemma \ref{3.23}, Corollary \ref{3.24} and above observations to describe a graded polynomial identity of degree $2$ which the matrix algebra $M_k(\mathbb{F}^{\sigma}[\mathsf{H}])$ (with an elementary-canonical $\mathsf{G}$-grading) can satisfy.

\begin{proposition}\label{3.19}
Let $\mathbb{F}$ be a field with $|\mathbb{F}|>2$, $\mathsf{G}$ a finite group, $\mathsf{H}$ a subgroup of $\mathsf{G}$, and $\sigma$ a $2$-cocycle on $\mathsf{H}$. Let $\mathfrak{B}=M_k(\mathbb{F}^\sigma[\mathsf{H}])$ the algebra of $k\times k$ matrices over $\mathbb{F}^\sigma[\mathsf{H}]$ with an elementary-canonical grading $\Gamma$ defined by a $k$-tuple $(\theta_1,\theta_2,\dots,\theta_k)\in \mathsf{G}^k$. Suppose $k=(\mathsf{G}:\mathsf{H})$ and $\theta_r\notin \mathsf{H} \theta_s $ for all $r\neq s$. If $\mathfrak{B}$ satisfies a graded polynomial identity $g\in\mathbb{F}\langle X^{\mathsf{G}} \rangle$ of degree $2$, then  $g$ can be rewritten as 
\begin{equation*}
	\begin{split}
g=\sum_{\substack{i,j=1 \\ \xi,\zeta\in\mathsf{H}}}^k \lambda_{i,j}^{\xi,\zeta} \left[x^{(\xi^{\theta_i})}, y^{(\zeta^{\theta_j})} \right]_{\sigma} 
\ ,
	\end{split}
\end{equation*}
for some $\lambda_{i,j}^{\xi,\zeta}\in\mathbb{F}$.
\end{proposition}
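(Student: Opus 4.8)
The plan is to peel $g$ down to a combination of $\sigma$-commutators in three stages, exploiting that the standing hypotheses $k=(\mathsf{G}:\mathsf{H})$ and $\theta_r\notin\mathsf{H}\theta_s$ $(r\neq s)$ are exactly those of Corollary \ref{3.24}. First I would apply Lemma \ref{3.13} to replace $g$ by an equivalent identity of the form \eqref{3.02}, a sum $\sum_{r<s}\gamma_{rs}[x_r^{(\xi_r)},x_s^{(\xi_s)}]_{\mathsf{f_G}}+\sum_k\delta_k(x_k^{(\xi_k)})^2$. By Corollary \ref{3.24}(i) one has $(x^{(\xi)})^2\not\equiv_\mathsf{G}0$ for every grade $\xi$, so the computation in the proof of Lemma \ref{3.13} (where $\delta_k a^2=0$ on $\mathfrak{B}_{\xi_k}$, cf. \eqref{3.03}) forces $\delta_k=0$ for all $k$; thus $g$ reduces to a pure sum of $\mathsf{f_G}$-commutators. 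Since the variables are independent, setting all but one pair to $0$ shows that each summand with $\gamma_{rs}\neq0$ is itself a graded identity, $[x_r^{(\xi_r)},x_s^{(\xi_s)}]_{\mathsf{f_G}}\equiv_\mathsf{G}0$.

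Next I would show that each surviving term is, as an identity of $\mathfrak{B}$, a $\sigma$-commutator. Writing $[x^{(\alpha)},y^{(\beta)}]_{\mathsf{f_G}}=\mathsf{f_G}(\alpha,\beta)xy-\mathsf{f_G}(\beta,\alpha)yx$ and using $xy\not\equiv_\mathsf{G}0\not\equiv_\mathsf{G}yx$ (Corollary \ref{3.24}(i)), both coefficients must be nonzero, so the relation reads $xy=c\,yx$ with $c=\mathsf{f_G}(\beta,\alpha)\mathsf{f_G}(\alpha,\beta)^{-1}$ for all homogeneous $x\in\mathfrak{B}_\alpha$, $y\in\mathfrak{B}_\beta$. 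Evaluating on basis elements $E_{ij}\eta_a$, $E_{rs}\eta_b$ through the multiplication of $M_k(\mathbb{F}^\sigma[\mathsf{H}])$ and comparing with $[x,y]_\sigma=\sigma(a,b)^{-1}xy-\sigma(b,a)^{-1}yx$, the $\sigma$-normalization cancels the cocycle factors, and the case analysis of Lemma \ref{3.23}(i) shows that wherever a product survives one is forced into the diagonal situation $i=j=r=s$ with $ab=ba$, where $[x,y]_\sigma=E_{ii}(\eta_{ab}-\eta_{ba})=0$. Hence the relation $[x^{(\alpha)},y^{(\beta)}]_{\mathsf{f_G}}\equiv_\mathsf{G}0$ already yields $[x^{(\alpha)},y^{(\beta)}]_\sigma\equiv_\mathsf{G}0$.

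Finally, Corollary \ref{3.24}(ii) applies to each surviving $\sigma$-commutator identity: since $[x^{(\alpha)},y^{(\beta)}]_\sigma\equiv_\mathsf{G}0$, necessarily $\alpha\in\mathsf{H}^{\theta_i}$ for every $i$, and likewise $\beta$, so in particular $\alpha=\xi^{\theta_i}$ and $\beta=\zeta^{\theta_j}$ for suitable $\xi,\zeta\in\mathsf{H}$ — exactly the shape of the grades in the asserted expansion (and, by Lemma \ref{3.23}(iv), every such commutator with $\xi\zeta=\zeta\xi$ is indeed an identity, so the representation is consistent). Collecting the surviving terms and relabelling the indices then gives $g=\sum\lambda_{i,j}^{\xi,\zeta}[x^{(\xi^{\theta_i})},y^{(\zeta^{\theta_j})}]_\sigma$.

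The step I expect to be the main obstacle is the middle one — passing from the abstract $\mathsf{f_G}$-commutator produced by Lemma \ref{3.13} to the concrete $\sigma$-commutator. The delicate point is that a single homogeneous component $\mathfrak{B}_\alpha$ carries $k$ basis elements $E_{ii}\eta_{a_i}$ whose group parts $a_i=\theta_i\alpha\theta_i^{-1}$ generally differ, so the cocycle factors $\sigma(a_i,b_i)$ are not constant across the component; one must check that the single scalar $c=\mathsf{f_G}(\beta,\alpha)\mathsf{f_G}(\alpha,\beta)^{-1}$ coming from $\mathsf{f_G}$ is simultaneously compatible with all of them, which is guaranteed precisely because $g$ is an identity, and that the $\sigma$-normalization is what makes these compatibility conditions collapse to the clean commuting condition $ab=ba$.
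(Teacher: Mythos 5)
Your argument is correct and follows essentially the same route as the paper, which states this proposition without a written-out proof and merely says it follows by applying Lemma \ref{3.13}, Lemma \ref{3.23}, Corollary \ref{3.24} and the preceding observations --- exactly the three tools you use, in the same order (reduce to the form \eqref{3.02}, kill the squares and isolate each commutator term via Corollary \ref{3.24}(i), convert each nontrivial $\mathsf{f_G}$-commutator identity into the corresponding $\sigma$-commutator by evaluating on the basis $E_{ij}\eta_a$, and locate the admissible degrees in $\bigcap_i \mathsf{H}^{\theta_i}$ via Corollary \ref{3.24}(ii)). Your closing remark about the non-constant cocycle factors across a single homogeneous component is a genuine subtlety that the paper glosses over, and your resolution of it (the compatibility conditions are forced by $g$ being an identity, while the $\sigma$-normalization makes the resulting commutator vanish identically) is the right one.
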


Observe that, since $[x^{(\xi^{\theta_i})}, y^{(\zeta^{\theta_j})} ]_{\sigma}=-[ y^{(\zeta^{\theta_j})}, x^{(\xi^{\theta_i})} ]_{\sigma}$, the previous result can be improved by
\begin{equation*}
	\begin{split}
g=\sum_{\substack{ 1\leq i,j\leq k \\ 1\leq r \leq s \leq m }} \lambda_{r,s}^{i,j} \left[x^{(\xi_r^{\theta_i})}, y^{(\xi_s^{\theta_j})} \right]_{\sigma} 
\ , 
	\end{split}
\end{equation*}
where $\mathsf{H}=\{\xi_1, \xi_2,\dots,\xi_m\}$ and $ \lambda_{r,s}^{i,j}\in\mathbb{F}$.

Lastly, given $\mathsf{G}$ a group and $\mathbb{F}$ a field, and considering the description presented in Lemma \ref{3.13}, another special type of graded polynomial of degree $2$ of $\mathbb{F}\langle X^{\mathsf{G}}\rangle$ is the following:
	\begin{equation}\label{3.15}
g(x^{(e)}_1, \dots, x^{(e)}_n;y^{(\xi_1)}_1, \dots, y^{(\xi_m)}_m)=\sum_{r=1}^m \gamma_{r} \left[x^{(e)}_r, y^{(\xi_r)}_r \right]_{\mathsf{f_G}} +\sum_{s=1}^n \delta_{s}\left(x_s^{(e)} \right)^2 \ ,
	\end{equation}
where $n\geq m$, $\gamma_{1},\dots,\gamma_m, \delta_{1},\dots,\delta_n\in\mathbb{F}$, and $[\ , \ ]_{\mathsf{f_G}}$ is an $\mathsf{f_{G}}$-commutator. In the next result, let us consider graded polynomials of this type.

\begin{lemma}\label{3.04}
	Let $\mathsf{H}$ be a subgroup of a group $\mathsf{G}$, $\mathbb{F}$ a field, and $\mathfrak{B}=M_k(\mathbb{F}^\sigma[\mathsf{H}])$ the algebra of $k\times k$ matrices over $\mathbb{F}^\sigma[\mathsf{H}]$ with an elementary-canonical $\mathsf{G}$-grading $\Gamma$ defined by a $k$-tuple $(\theta_1,\dots,\theta_k)\in \mathsf{G}^k$, where $\sigma$ is a $2$-cocycle on $\mathsf{H}$. Consider any map $\mathsf{f_G}:\mathsf{G}\times\mathsf{G}\rightarrow\mathbb{F}$. The following statements are true:
	\begin{enumerate}
	\item[i)] Given $\xi\in\mathsf{Supp}(\Gamma)$, if the graded polynomial $\left[x^{(e)}, y^{(\xi)} \right]_{\mathsf{f_G}}$ is a nontrivial graded identity for $\mathfrak{B}$, then $E_{ij}\eta_\zeta\in\mathfrak{B}_\xi$ only if $i=j$, and so $\xi\in\mathsf{H}^{\theta_i}$. In addition, $\left[x^{(e)}, y^{(\xi)} \right]\equiv_{\mathsf{G}}0$ in $\mathfrak{B}$;
	
	\item[ii)] If for each $\xi\in\mathsf{Supp}(\Gamma)$ there is $\mathsf{f_G}:\mathsf{G}\times\mathsf{G}\rightarrow\mathbb{F}$ such that $\left[x^{(e)}, y^{(\xi)} \right]_{\mathsf{f_G}}\neq 0$ and $ \left[x^{(e)}, y^{(\xi)} \right]_{\mathsf{f_G}}\equiv_{\mathsf{G}} 0$ in $\mathfrak{B}$, then $\mathfrak{B}=\mathbb{F}^\sigma[\mathsf{H}]$. In addition, $\mathfrak{B}_e$ is central in $\mathfrak{B}$;
	
	\item[iii)] If $ \left[x^{(e)}, y^{(e)} \right]_{\mathsf{f_G}}$ is a nontrivial graded polynomial identity for $\mathfrak{B}$, then $\mathfrak{B}_e$ is commutative and $\theta_i\notin\mathsf{H}\theta_j$ for $i\neq j$. In this case, $\mathfrak{B}_e$ is central in $\mathfrak{B}_{D}= \mathsf{span}_{\mathbb{F}}\{E_{ii}\eta_\zeta\in\mathfrak{B}: \zeta\in\mathsf{H}, i=1,\dots, k\}$.
	\end{enumerate}
Moreover, if $g=g(x^{(e)}_1, \dots, x^{(e)}_n;y^{(\xi_1)}_1, \dots, y^{(\xi_m)}_m)\in\mathbb{F}\langle X^\mathsf{G} \rangle$ is a graded polynomial of degree $2$ as in \eqref{3.15} and $g\equiv_{\mathsf{G}}0$ in $\mathfrak{B}$, then $m=n$ and $\displaystyle g(x^{(e)}_1, \dots, x^{(e)}_m;y^{(\xi_1)}_1, \dots, y^{(\xi_m)}_m)=\sum_{r=1}^m \tilde{\gamma}_{r} \left[x^{(e)}_r, y^{(\xi_r)}_r \right]$, for $\tilde{\gamma}_1,\dots,\tilde{\gamma}_m\in\mathbb{F}$, not all zero.
\end{lemma}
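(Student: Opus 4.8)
The plan is to reduce everything to explicit computations in the matrix-unit basis $\{E_{ij}\eta_\zeta\}$ of $\mathfrak{B}$, using that for $a\in\mathfrak{B}_e$ and $b\in\mathfrak{B}_\xi$ one has $[a,b]_{\mathsf{f_G}}=\mathsf{f_G}(e,\xi)\,ab-\mathsf{f_G}(\xi,e)\,ba$, together with the product rule $(E_{ij}\eta_\tau)(E_{rs}\eta_\nu)=\delta_{jr}\sigma(\tau,\nu)E_{is}\eta_{\tau\nu}$. I write $\alpha=\mathsf{f_G}(e,\xi)$ and $\beta=\mathsf{f_G}(\xi,e)$, so that ``$[x^{(e)},y^{(\xi)}]_{\mathsf{f_G}}$ nontrivial'' means exactly $(\alpha,\beta)\neq(0,0)$. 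In each item I feed the hypothesised identity carefully chosen pairs of basis elements and read off the resulting constraints on $\alpha,\beta$ and on the defining tuple $(\theta_1,\dots,\theta_k)$.

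For item i), suppose toward a contradiction that $E_{ij}\eta_\zeta\in\mathfrak{B}_\xi$ with $i\neq j$. Since $E_{ii}\eta_e,E_{jj}\eta_e\in\mathfrak{B}_e$, I evaluate the identity on the pairs $(E_{ii}\eta_e,E_{ij}\eta_\zeta)$ and $(E_{jj}\eta_e,E_{ij}\eta_\zeta)$: the first reduces to $\alpha\,\sigma(e,\zeta)E_{ij}\eta_\zeta=0$ and the second to $\beta\,\sigma(\zeta,e)E_{ij}\eta_\zeta=0$, forcing $\alpha=\beta=0$, a contradiction. Hence every $E_{ij}\eta_\zeta\in\mathfrak{B}_\xi$ has $i=j$ and $\xi=\theta_i^{-1}\zeta\theta_i\in\mathsf{H}^{\theta_i}$. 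For the ``in addition'' clause I first observe that if $\theta_p\in\mathsf{H}\theta_i$ for some $p\neq i$, with $E_{ii}\eta_\zeta\in\mathfrak{B}_\xi$, then both $E_{pi}\eta_{\theta_p\theta_i^{-1}}$ and $E_{ip}\eta_{\theta_i\theta_p^{-1}}$ lie in $\mathfrak{B}_e$; testing these against $E_{ii}\eta_\zeta$ again yields $\alpha=0$ and $\beta=0$ respectively, contradicting nontriviality. Thus $\theta_p\notin\mathsf{H}\theta_i$ for all $p\neq i$, and a direct check on the basis then shows that the only element of $\mathfrak{B}_e$ not annihilating $E_{ii}\eta_\zeta$ on both sides is $E_{ii}\eta_e$, for which $ab=ba=E_{ii}\eta_\zeta$; by bilinearity $[x^{(e)},y^{(\xi)}]\equiv_{\mathsf{G}}0$.

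Items ii) and iii) follow quickly. For ii), the hypothesis lets me apply i) to every $\xi\in\mathsf{Supp}(\Gamma)$, so every homogeneous component of $\mathfrak{B}$ is spanned by diagonal matrix units; were $k\geq 2$, the off-diagonal element $E_{12}\eta_e$, which is homogeneous of degree $\theta_1^{-1}\theta_2\in\mathsf{Supp}(\Gamma)$, would contradict this, whence $k=1$, $\mathfrak{B}=\mathbb{F}^{\sigma}[\mathsf{H}]$ and $\mathfrak{B}_e=\mathbb{F}\eta_e$ is central. For iii), with $\xi=\zeta=e$ one has $[a,b]_{\mathsf{f_G}}=\mathsf{f_G}(e,e)[a,b]$, so nontriviality gives $\mathsf{f_G}(e,e)\neq0$ and the identity is equivalent to commutativity of $\mathfrak{B}_e$. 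If $\theta_i\in\mathsf{H}\theta_j$ for some $i\neq j$, then $E_{ij}\eta_\tau$ and $E_{ji}\eta_{\tau^{-1}}$ (with $\tau=\theta_i\theta_j^{-1}\in\mathsf{H}$) both lie in $\mathfrak{B}_e$, and their two products are nonzero scalar multiples of $E_{ii}\eta_e$ and $E_{jj}\eta_e$, hence unequal, contradicting commutativity; therefore $\theta_i\notin\mathsf{H}\theta_j$ for $i\neq j$, so $\mathfrak{B}_e=\mathsf{span}_{\mathbb{F}}\{E_{ii}\eta_e\}$, which a one-line computation shows to centralise $\mathfrak{B}_D$.

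Finally, for the ``Moreover'' part I first specialise every $y_r$ to $0$, which kills all commutators and leaves $\sum_{s}\delta_s(x_s^{(e)})^2\equiv_{\mathsf{G}}0$; isolating each square (the $x_s$ are distinct variables) and using $(E_{11}\eta_e)^2=\sigma(e,e)E_{11}\eta_e\neq0$, i.e.\ $(x^{(e)})^2\not\equiv_{\mathsf{G}}0$, forces $\delta_s=0$ for all $s$. The surviving polynomial $\sum_{r=1}^m\gamma_r[x_r^{(e)},y_r^{(\xi_r)}]_{\mathsf{f_G}}$ has its terms on pairwise disjoint sets of variables, so isolating the $r$-th term gives $\gamma_r[x^{(e)},y^{(\xi_r)}]_{\mathsf{f_G}}\equiv_{\mathsf{G}}0$; for each $r$ with $\gamma_r\neq0$ and nontrivial $\mathsf{f_G}$-commutator, item i) (if $\xi_r\neq e$) or item iii) (if $\xi_r=e$) shows the ordinary commutator $[x^{(e)},y^{(\xi_r)}]$ is itself an identity of $\mathfrak{B}$. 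Replacing each $\mathsf{f_G}$-commutator by the ordinary one and discarding the now-absent variables $x_{m+1},\dots,x_n$ (so that $n=m$) rewrites $g$ as $\sum_{r=1}^m\tilde\gamma_r[x_r^{(e)},y_r^{(\xi_r)}]$, with not all $\tilde\gamma_r$ zero because $g$ was assumed nontrivial. I expect the main obstacle to be the ``in addition'' clause of item i): extracting from the mere nontriviality of the $\mathsf{f_G}$-commutator identity the coset condition $\theta_p\notin\mathsf{H}\theta_i$, which is precisely what upgrades an $\mathsf{f_G}$-commutator identity to an ordinary commutator identity and drives the whole reduction.
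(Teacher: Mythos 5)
Your proof is correct and follows essentially the same route as the paper's: evaluating the $\mathsf{f_G}$-commutator identity on the matrix units $E_{ij}\eta_\zeta$ to force $i=j$, pin down the coefficients $\mathsf{f_G}(e,\xi)$ and $\mathsf{f_G}(\xi,e)$, and then kill the square terms via $(E_{ii}\eta_e)^2\neq0$. The only organizational difference is in the ``in addition'' clause of item \textit{i)}: the paper derives $\mathsf{f_G}(e,\xi)=\mathsf{f_G}(\xi,e)\neq0$ from the diagonal test $[E_{ii}\eta_e,E_{ii}\eta_\zeta]_{\mathsf{f_G}}=0$, so that $[x^{(e)},y^{(\xi)}]_{\mathsf{f_G}}$ is literally a nonzero scalar multiple of $[x^{(e)},y^{(\xi)}]$ as a polynomial (which also makes the final equality of polynomials in the ``Moreover'' part immediate), whereas your coset detour establishes the same identity element by element and leaves that scalar equality implicit.
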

\begin{proof}
\textit{i)} Suppose $\left[x^{(e)}, y^{(\xi)} \right]_{\mathsf{f_G}}\equiv_{\mathsf{G}} 0$ in $\mathfrak{B}$, for some $\xi\in\mathsf{Supp}(\Gamma)$, with $[x^{(e)}, y^{(\xi)} ]_{\mathsf{f_G}}$ nontrivial. Hence, either $\mathsf{f}(e,\xi)\neq0$ or $\mathsf{f}(\xi,e)\neq0$, and taking $i,j\in\{1,\dots,k\}$ and $\zeta\in\mathsf{H}$ such that $E_{ij}\eta_\zeta\in\mathfrak{B}_\xi$, we have that $[E_{ll}\eta_e, E_{ij}\eta_{\zeta}]_{\mathsf{f_G}}=0$ for all $l\in\{1,\dots,k\}$, since $E_{ll}\eta_e\in\mathfrak{B}_e$. Assume $\mathsf{f}(e,\xi)\neq0$. So $0=[E_{ii}\eta_e, E_{ij}\eta_{\zeta}]_{\mathsf{f_G}}=\mathsf{f}(e,\xi)\sigma(e,\zeta)E_{ij}\eta_{\zeta}-\mathsf{f}(\xi,e)(E_{ij}\eta_{\zeta} E_{ii}\eta_{e})$. As $\mathsf{f}(e,\xi)\sigma(e,\zeta)E_{ij}\eta_{\zeta}\neq0$, we must conclude that $\mathsf{f}(\xi,e)(E_{ij}\eta_{\zeta} E_{ii}\eta_{e})\neq0$, and so $i=j$. From this, it follows that $\mathsf{f}(\xi,e)=\mathsf{f}(e,\xi)$ and $E_{ij}\eta_\zeta\in\mathfrak{B}_\xi$ implies $i=j$ and $\xi\in \mathsf{H}^{\theta_i}$. Note that $[x^{(e)}, y^{(\xi)}]=(\mathsf{f}(e,\xi))^{-1}[x^{(e)}, y^{(\xi)}]_{\mathsf{f_G}}\equiv_{\mathsf{G}} 0$ in $\mathfrak{B}$.

\textit{ii)} To obtain a contradiction, assume $k>1$. Since $0\neq E_{1k}\eta_e\in\mathfrak{B}_{\theta_1^{-1}\theta_k}$, we have that $\theta_1^{-1}\theta_k\in\mathsf{Supp}(\Gamma)$, and hence, by the hypothesis, there is $\mathsf{f_G}:\mathsf{G}\times\mathsf{G}\rightarrow\mathbb{F}$ such that $[x^{(e)}, y^{(\theta_1^{-1}\theta_k)} ]_{\mathsf{f_G}}$ is a nontrivial graded polynomial identity for $\mathfrak{B}$. As $E_{11}\eta_e,E_{kk}\eta_e\in\mathfrak{B}_{e}$, it follows that $0=[E_{11}\eta_e,E_{1k}\eta_e]_{\mathsf{f_G}}=\mathsf{f}(e,\theta_1^{-1}\theta_k)\sigma(e,e)E_{1k}\eta_e$ and 
$0=[E_{kk}\eta_e,E_{1k}\eta_e]_{\mathsf{f_G}}=-\mathsf{f}(\theta_1^{-1}\theta_k,e)\sigma(e,e)E_{1k}\eta_e$, and thus, $\mathsf{f}(e,\theta_1^{-1}\theta_k)=\mathsf{f}(\theta_1^{-1}\theta_k,e)=0$. This contradicts the fact that $[x^{(e)}, y^{(\theta_1^{-1}\theta_k)} ]_{\mathsf{f_G}}$ is nontrivial. Therefore, $\mathfrak{B}=\mathbb{F}^\sigma[\mathsf{H}]$.

Now, it is well-known that $\sigma(e,\zeta)=\sigma(\zeta,e)$, for any $\zeta\in\mathsf{H}$ (for a proof of this fact, see \cite{MarduaThesis}, Proposition 1.2.6 , p.26). So $[\lambda \eta_e, \gamma \eta_\zeta]=\lambda \gamma \eta_e \eta_\zeta-\gamma\lambda \eta_\zeta \eta_e=\lambda \gamma \sigma(e,\zeta) \eta_\zeta-\gamma\lambda \sigma(\zeta,e)\eta_\zeta=0$ for any $\lambda,\gamma\in\mathbb{F}$ and $\zeta\in\mathsf{G}$. The item \textit{ii)} follows.

\textit{iii)} Suppose $[x^{(e)}, y^{(e)} ]_{\mathsf{f_G}}\neq 0$ and $ [x^{(e)}, y^{(e)}]_{\mathsf{f_G}}\equiv_{\mathsf{G}} 0$ in $\mathfrak{B}$. By item \textit{i)}, $E_{ij}\eta_\zeta\in\mathfrak{B}_e$ iff $i=j$ and $\zeta=e$. So $\mathfrak{B}_e=\mathsf{span}_\mathbb{F}\{E_{ii}\eta_e: i=1,\dots,k\}$, and obviously $\mathfrak{B}_e$ is commutative (and also central in $\mathfrak{B}_{D}$). Now, let $r,s\in\{1,\dots,k\}$ such that $\theta_r\in\mathsf{H}\theta_s$. Hence, taking $\varsigma\in\mathsf{H}$ such that $\theta_r=\varsigma\theta_s$, it follows that $E_{rs}\eta_\varsigma\in\mathfrak{B}_e$. Again by the item \textit{i)}, we have that $r=s$ and $\varsigma=e$. The item \textit{iii)} is proved.

Finally, consider  $g(x^{(e)}_1, \dots, x^{(e)}_n;y^{(\xi_1)}_1, \dots, y^{(\xi_m)}_m)=\sum_{r=1}^m \gamma_{r} [x^{(e)}_r, y^{(\xi_r)}_r ]_{\mathsf{f_G}} +\sum_{s=1}^n \delta_{s}(x_s^{(e)} )^2 $ as in \eqref{3.15}. Since $E_{ii}\eta_e\in\mathfrak{B}_e$ and $\left(E_{ii}\eta_e\right)^2=\sigma(e,e)E_{ii}\eta_e\neq0$ for all $i=1,\dots,k$, it follows that $\delta_{s}=0$ for $s=1,\dots,n$, and so $m=n$. Consequently, $ g(x^{(e)}_1, \dots, x^{(e)}_m;y^{(\xi_1)}_1, \dots, y^{(\xi_m)}_m)=\sum_{r=1}^m \gamma_{r} [x^{(e)}_r, y^{(\xi_r)}_r ]_{\mathsf{f_G}}$. Now, by item \textit{i)} of this lemma, we can write $\xi_r=\theta_{r_i}^{-1}\zeta\theta_{r_i}$, with $\zeta\in\mathsf{H}$ and $r_i\in\{1,\dots,k\}$. Hence, $0=[E_{r_i r_i}\eta_e, E_{r_i r_i}\eta_\zeta ]_{\mathsf{f_G}}=\mathsf{f}(e,\xi_r)\sigma(e,\zeta) E_{r_i r_i}\eta_\zeta-\mathsf{f}(\xi_r,e)\sigma(\zeta,e) E_{r_i r_i}\eta_\zeta =(\mathsf{f}(e,\xi_r)-\mathsf{f}(\xi_r,e))\sigma(e,\zeta) E_{r_i r_i}\eta_\zeta$. So $\mathsf{f}(e,\xi_r)=\mathsf{f}(\xi_r,e)$ for all $r_i\in\{1,\dots,k\}$, and thus, $[x^{(e)}_r, y^{(\xi_r)}_r ]_{\mathsf{f_G}}=\mathsf{f}(e,\xi_r)x^{(e)}_r y^{(\xi_r)}_r-\mathsf{f}(\xi_r,e) y^{(\xi_r)}_r x^{(e)}_r=\mathsf{f}(e,\xi_r)(x^{(e)}_r y^{(\xi_r)}_r - y^{(\xi_r)}_r x^{(e)}_r)=\mathsf{f}(e,\xi_r)[x^{(e)}_r , y^{(\xi_r)}_r ]$. Therefore, putting $\tilde{\gamma}_r=\gamma_r\mathsf{f}(e,\xi_r)\in\mathbb{F}$ for all $r=1,\dots,m$, we conclude that $g(x^{(e)}_1, \dots, x^{(e)}_m;y^{(\xi_1)}_1, \dots, y^{(\xi_m)}_m)=\sum_{r=1}^m \tilde{\gamma}_{r} [x^{(e)}_r, y^{(\xi_r)}_r ]$.
\end{proof}

%
%
%

\section{Main Results}\label{main}

In this section, the main aim is to present a study on the graded algebras that satisfy a graded polynomial identity $g=g(x_1^{(e)}, \dots, x_n^{(e)})$ of degree $2$. Particularly, let us study the following problem:

\begin{problem}\label{3.01}
What can we say about an associative algebra $\mathfrak{A}$ graded by a group $\mathsf{G}$ when its neutral component $\mathfrak{A}_e$ satisfies a polynomial identity $g$ of degree $2$? Which ordinary identities does $\mathfrak{A}$ satisfy?
\end{problem}

More general than that, our main aim in this section is to deduce some properties of $\mathfrak{A}$ when $\mathfrak{A}$ satisfies some graded polynomial $g$ of type \eqref{3.15}.

Let $\mathsf{G}$ be a group and $\mathbb{F}$ a field. Let us initially consider a $\mathsf{G}$-graded $\mathbb{F}$-algebra $\mathfrak{A}$ that satisfies a graded polynomial identity $g$ of degree $2$ of $\mathbb{F}\langle X^{\mathsf{G}}\rangle$. We must have one of the two possibilities below:
\begin{enumerate}
	\item[1)] $\mathfrak{A}$ is nilpotent;
	\item[2)] $\mathfrak{A}$ is not nilpotent.
\end{enumerate}

Suppose $\mathsf{G}$ a finite group. By Lemma \ref{mardua3.9}, $\mathfrak{A}$ is nilpotent when $\mathfrak{A}_e$ is nilpotent. From this, studying the above possibilities is equivalent to studying ``$\mathfrak{A}_e$ is nilpotent'' or ``$\mathfrak{A}_e$ is not nilpotent''. On the other hand, Lemma \ref{mardua4.7} ensures that if $\mathfrak{A}_e$ is not nilpotent and $\mathsf{char}(\mathbb{F})\neq2$, then $\mathfrak{A}_e$ is not nil of index $2$. So, if $g$ is a polynomial of type \eqref{3.15}, $\mathsf{char}(\mathbb{F})\neq2$ and $\mathfrak{A}$ is not nilpotent, we must have that all $\delta_s$'s of $g$ are zero, and hence, $g(x^{(e)}_1, \dots, x^{(e)}_m;y^{(\xi_1)}_1, \dots, y^{(\xi_m)}_m)=\sum_{r=1}^m \gamma_{r} [x^{(e)}_r, y^{(\xi_r)}_r ]_{\mathsf{f_G}}$, where $\gamma_{1},\dots,\gamma_m\in\mathbb{F}$ and $[\ , \ ]_{\mathsf{f_G}}$ is an $\mathsf{f_{G}}$-commutator.

	Now, consider $\mathfrak{A}$ a finite dimensional $\mathsf{G}$-graded $\mathbb{F}$-algebra, with $\mathfrak{A}=\left(\bigtimes_{i=1}^p M_{k_i}(\mathbb{F}^{\sigma_i}[\mathsf{H}_i])\right) \oplus \mathsf{J}(\mathfrak{A})$ as described by Lemma \ref{teoIrina03}. Recall that $\mathsf{J}(\mathfrak{A})$ is the largest nilpotent ideal of $\mathfrak{A}$ (see Chapter 1 of \cite{Hers05}), and so $\mathfrak{A}\neq\mathsf{J}(\mathfrak{A})$ when $\mathfrak{A}$ is not nilpotent. In this way, if $\mathfrak{A}$ is not nilpotent and satisfies a graded polynomial identity of degree $2$ of the type \eqref{3.15}, then it follows from Lemma \ref{3.04} that $\mathfrak{A}$ satisfies a graded polynomial identity $\widetilde{g}(x^{(e)}_1, \dots, x^{(e)}_m;y^{(\xi_1)}_1, \dots, y^{(\xi_m)}_m)=\sum_{r=1}^m \gamma_{r} [x^{(e)}_r, y^{(\xi_r)}_r ]$, $\gamma_{r}\in\mathbb{F}^*$ and $\xi_r\in\mathsf{Supp}(\Gamma)$. 
%

By applying Lemmas \ref{teoBergCoheBaht}, \ref{teoIrina01} and \ref{teoIrina03}, along with the observations discussed above, it is evident that the following two results hold true.

	\begin{theorem}\label{3.20}
Let $\mathsf{G}$ be a finite abelian group, $\mathbb{F}$ an algebraically closed field of characteristic zero, and $\mathfrak{A}$ a finitely generated $\mathsf{G}$-graded algebra. 
Suppose that $\mathfrak{A}_e$ is a $PI$-algebra. If $\mathfrak{A}$ satisfies a graded polynomial identity $g\in \left(\mathbb{F}\langle X^\mathsf{G} \rangle\right)$ of degree $2$ of the type \eqref{3.15}, then $\mathfrak{A}$ is either a nilpotent algebra or satisfies a graded polynomial identity $\displaystyle \widetilde{g}(x^{(e)}_1, \dots, x^{(e)}_m;y^{(\xi_1)}_1, \dots, y^{(\xi_m)}_m)=\sum_{r=1}^m \tilde{\gamma}_{r} \left[x^{(e)}_r, y^{(\xi_r)}_r \right]$, where $\tilde{\gamma}_1,\dots,\tilde{\gamma}_m\in\mathbb{F}^*$ and $\xi_r\in\mathsf{Supp}(\Gamma)$. 
In addition, if $\mathsf{Supp}(\Gamma)=\{\xi_1, \dots, \xi_m\}$ and $\mathfrak{A}$ is not nilpotent, then $\mathfrak{A}/\mathsf{J}(\mathfrak{A})$ and $\mathbb{F}^{\sigma_1}[\mathsf{H}_1] \times \cdots \times \mathbb{F}^{\sigma_p}[\mathsf{H}_p]$ are $\mathsf{G}PI$-equivalent for some subgroups $\mathsf{H}_1,\dots,\mathsf{H}_p$ of $\mathsf{G}$ and $2$-cocycles $\sigma_1,\dots,\sigma_p$ on $\mathsf{H}_1,\dots,\mathsf{H}_p$, respectively.
	\end{theorem}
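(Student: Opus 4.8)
The plan is to reduce the finitely generated algebra $\mathfrak{A}$ to a finite dimensional model and then apply the structural analysis of Lemma \ref{3.04} to its semisimple part. First, since $\mathfrak{A}_e$ is a $PI$-algebra, Lemma \ref{teoBergCoheBaht} guarantees that $\mathfrak{A}$ is itself a $PI$-algebra. Because $\mathfrak{A}$ is moreover finitely generated, the graded Specht-type result of Lemma \ref{teoIrina01} applies: the $\mathsf{G}T$-ideal $\mathsf{T}^{\mathsf{G}}(\mathfrak{A})$ coincides with $\mathsf{T}^{\mathsf{G}}(\mathfrak{C})$ for some finite dimensional $\mathsf{G}$-graded $\mathbb{F}$-algebra $\mathfrak{C}$; that is, $\mathfrak{A}$ and $\mathfrak{C}$ are $\mathsf{G}PI$-equivalent and satisfy exactly the same graded identities (in particular the same support and the same $g$). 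By Lemma \ref{teoIrina03} I may write $\mathfrak{C} = \mathfrak{B} \oplus \mathsf{J}(\mathfrak{C})$, where $\mathfrak{B} = M_{k_1}(\mathbb{F}^{\sigma_1}[\mathsf{H}_1]) \times \cdots \times M_{k_p}(\mathbb{F}^{\sigma_p}[\mathsf{H}_p])$ carries elementary-canonical gradings.

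Next I would settle the dichotomy. If $\mathfrak{A}$ is nilpotent, the first alternative already holds. Assume then that $\mathfrak{A}$ is not nilpotent; I claim $\mathfrak{B} \neq \{0\}$, equivalently $p \geq 1$. Indeed, if $\mathfrak{B} = \{0\}$ then $\mathfrak{C} = \mathsf{J}(\mathfrak{C})$ is nilpotent, say $\mathfrak{C}^N = \{0\}$, so $\mathfrak{C}$ satisfies the graded identities $x_1^{(\zeta_1)} \cdots x_N^{(\zeta_N)} \equiv_{\mathsf{G}} 0$ for all $\zeta_1,\dots,\zeta_N \in \mathsf{G}$, and by $\mathsf{G}PI$-equivalence so does $\mathfrak{A}$. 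Expanding an arbitrary product of $N$ elements of $\mathfrak{A}$ into its homogeneous components then shows $\mathfrak{A}^N = \{0\}$, contradicting non-nilpotency. Thus $p \geq 1$ and at least one factor $\mathfrak{B}_i = M_{k_i}(\mathbb{F}^{\sigma_i}[\mathsf{H}_i])$ is available.

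Now I would extract the commutator form of $g$. Since $\mathfrak{C}$ satisfies $g$ and each $\mathfrak{B}_i$ is a $\mathsf{G}$-graded subalgebra of $\mathfrak{C}$, every $\mathfrak{B}_i$ satisfies $g$. Fixing one such factor and applying the final assertion of Lemma \ref{3.04} to it, the square terms of $g$ are forced to vanish (because $(E_{jj}\eta_e)^2 \neq 0$) and each $\mathsf{f_G}$-commutator collapses to an ordinary one, so that $g$ is, as a fixed element of $\mathbb{F}\langle X^{\mathsf{G}} \rangle$, equal to $\sum_{r} \tilde{\gamma}_r [x_r^{(e)}, y_r^{(\xi_r)}]$ with $\tilde{\gamma}_r = \gamma_r \mathsf{f}(e,\xi_r)$ not all zero. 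Discarding the terms with $\tilde{\gamma}_r = 0$ and relabelling, I obtain $\widetilde{g} = \sum_{r=1}^{m} \tilde{\gamma}_r [x_r^{(e)}, y_r^{(\xi_r)}]$ with all $\tilde{\gamma}_r \in \mathbb{F}^*$ and $\xi_r \in \mathsf{Supp}(\Gamma)$; since $g = \widetilde{g}$ and $\mathfrak{A}$ satisfies $g$, the algebra $\mathfrak{A}$ satisfies $\widetilde{g}$, which is the second alternative.

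For the final assertion, suppose in addition $\mathsf{Supp}(\Gamma) = \{\xi_1,\dots,\xi_m\}$ and $\mathfrak{A}$ not nilpotent. Because the variables in $\widetilde{g}$ are pairwise distinct, specializing all but one pair to $0$ yields $[x^{(e)}, y^{(\xi_r)}] \equiv_{\mathsf{G}} 0$ in $\mathfrak{A}$ for each $r$, hence in $\mathfrak{C}$ and in each $\mathfrak{B}_i$; in particular $\mathfrak{A}_e$ is central in $\mathfrak{A}$. As $\{\xi_1,\dots,\xi_m\}$ exhausts the support, for every homogeneous degree occurring in $\mathfrak{B}_i$ the nontrivial ordinary commutator $[x^{(e)}, y^{(\xi)}]$ is a graded identity of $\mathfrak{B}_i$, so Lemma \ref{3.04}(ii) forces $k_i = 1$, i.e. $\mathfrak{B}_i = \mathbb{F}^{\sigma_i}[\mathsf{H}_i]$ for every $i$ and $\mathfrak{B} = \mathbb{F}^{\sigma_1}[\mathsf{H}_1] \times \cdots \times \mathbb{F}^{\sigma_p}[\mathsf{H}_p]$. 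It remains to identify $\mathfrak{A}/\mathsf{J}(\mathfrak{A})$ with this semisimple algebra up to $\mathsf{G}PI$-equivalence, and I expect this to be the main obstacle: reducing $\mathfrak{A}$ to its finite dimensional model is clean for the full $\mathsf{G}T$-ideal, but showing that the graded identities of $\mathfrak{A}/\mathsf{J}(\mathfrak{A})$ coincide with those of the semisimple part $\mathfrak{B}$ of the model — rather than being perturbed by $\mathsf{J}(\mathfrak{C})$ — requires controlling the radical under $\mathsf{G}PI$-equivalence, which is where the representability and structure theory for finitely generated graded $PI$-algebras must be invoked to conclude $\mathfrak{A}/\mathsf{J}(\mathfrak{A}) \sim_{\mathsf{G}PI} \mathfrak{C}/\mathsf{J}(\mathfrak{C}) = \mathfrak{B}$.
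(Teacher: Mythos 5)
Your route is the paper's route: Lemma \ref{teoBergCoheBaht} to get that $\mathfrak{A}$ is a $PI$-algebra, Lemma \ref{teoIrina01} to replace $\mathfrak{A}$ by a $\mathsf{G}PI$-equivalent finite dimensional algebra $\mathfrak{C}$, Lemma \ref{teoIrina03} to write $\mathfrak{C}=\mathfrak{B}\oplus\mathsf{J}(\mathfrak{C})$, and the final assertion of Lemma \ref{3.04}, applied to a nonzero matrix factor of $\mathfrak{B}$, to kill the square terms of $g$ and collapse each $\mathsf{f_G}$-commutator to a scalar multiple of an ordinary commutator. The paper's own justification is just the sentence preceding the theorem (``By applying Lemmas \ref{teoBergCoheBaht}, \ref{teoIrina01} and \ref{teoIrina03}, along with the observations discussed above, it is evident\dots''), so for the dichotomy your write-up is at least as complete as the paper's; your remarks that $\mathfrak{B}\neq\{0\}$ when $\mathfrak{A}$ is not nilpotent, and that the distinctness of the variable pairs lets you isolate each $\left[x^{(e)},y^{(\xi_r)}\right]$, are precisely the ``observations discussed above.''

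The only real issue is the one you flag yourself: the last assertion, that $\mathfrak{A}/\mathsf{J}(\mathfrak{A})$ is $\mathsf{G}PI$-equivalent to $\mathbb{F}^{\sigma_1}[\mathsf{H}_1]\times\cdots\times\mathbb{F}^{\sigma_p}[\mathsf{H}_p]$. You correctly reduce it, via Lemma \ref{3.04}(ii), to $k_1=\cdots=k_p=1$, so that $\mathfrak{C}/\mathsf{J}(\mathfrak{C})\cong\mathfrak{B}=\mathbb{F}^{\sigma_1}[\mathsf{H}_1]\times\cdots\times\mathbb{F}^{\sigma_p}[\mathsf{H}_p]$, but you leave open the passage from $\mathsf{T}^{\mathsf{G}}(\mathfrak{A})=\mathsf{T}^{\mathsf{G}}(\mathfrak{C})$ to $\mathsf{T}^{\mathsf{G}}(\mathfrak{A}/\mathsf{J}(\mathfrak{A}))=\mathsf{T}^{\mathsf{G}}(\mathfrak{C}/\mathsf{J}(\mathfrak{C}))$. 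This is a genuine gap as written: $\mathsf{G}PI$-equivalence of two algebras does not formally transfer to their semisimple quotients, and one must at least argue that $\mathsf{J}(\mathfrak{A})$ is a graded ideal and then control how the graded identities of the quotient relate to those of the semisimple part of the model. But the paper supplies no argument for this step either --- it is simply asserted as part of the theorem --- so your proposal matches the paper everywhere the paper actually argues, and honestly isolates the one claim the paper leaves to the reader.
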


	\begin{corollary}\label{3.05}
Let $\mathsf{G}$ be a finite abelian group, $\mathbb{F}$ an algebraically closed field of characteristic zero, and $\mathfrak{A}$ a finitely generated $\mathsf{G}$-graded algebra. 
If $\mathfrak{A}_e$ satisfies a polynomial identity 
 of degree $2$, then either $\mathfrak{A}$ is a nilpotent algebra or $\mathfrak{A}_e$ is a commutative algebra.
	\end{corollary}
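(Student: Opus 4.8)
The plan is to read Corollary \ref{3.05} off Theorem \ref{3.20} by specialising the degree-$2$ identity to the neutral component, so that the conclusion of that theorem collapses to a single commutator identity.

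First I would translate the ordinary hypothesis into a graded one. Since $\mathfrak{A}_e$ satisfies a polynomial identity $g=g(x_1,\dots,x_n)$ of degree $2$, the algebra $\mathfrak{A}_e$ is in particular a $PI$-algebra, and substituting only homogeneous elements of degree $e$ shows that $g(x_1^{(e)},\dots,x_n^{(e)})\equiv_{\mathsf{G}}0$ is a graded polynomial identity for $\mathfrak{A}$ of degree $2$. Every monomial of $g(x_1^{(e)},\dots,x_n^{(e)})$ is a product of degree-$e$ indeterminates, hence is $\mathsf{G}$-homogeneous of degree $e$; so Corollary \ref{3.22} applies (alternatively Lemma \ref{3.13} followed by the passage to $\mathsf{f_G}$-commutators). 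Because $\mathsf{o}(e)=1\neq2$, no square term survives, and $\mathfrak{A}$ satisfies an identity $\widehat{g}=\sum_r\lambda_r[x_r^{(e)},y_r^{(e)}]_{\mathsf{f_G}}$. This is precisely a graded identity of type \eqref{3.15} with all component degrees $\xi_r=e$.

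Next I would invoke Theorem \ref{3.20}, whose hypotheses are all met: $\mathsf{G}$ is finite abelian, $\mathbb{F}$ is algebraically closed of characteristic zero, $\mathfrak{A}$ is finitely generated, $\mathfrak{A}_e$ is $PI$, and $\mathfrak{A}$ satisfies a graded identity of type \eqref{3.15}. The theorem then forces the dichotomy: either $\mathfrak{A}$ is nilpotent, or $\mathfrak{A}$ satisfies $\widetilde{g}=\sum_{r=1}^{m}\tilde{\gamma}_r\,[x_r^{(e)},y_r^{(\xi_r)}]\equiv_{\mathsf{G}}0$ with all $\tilde{\gamma}_r\in\mathbb{F}^{*}$. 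The crucial bookkeeping point is that the output degrees $\xi_r$ are the same ones carried by the input identity (this is how the final paragraph of Lemma \ref{3.04} transforms \eqref{3.15}), so here every $\xi_r=e$ and the identity reads $\widetilde{g}=\sum_{r=1}^{m}\tilde{\gamma}_r\,[x_r^{(e)},y_r^{(e)}]\equiv_{\mathsf{G}}0$. In the non-nilpotent branch I would then extract a single commutator identity: the $2m$ indeterminates $x_1,\dots,x_m,y_1,\dots,y_m$ are pairwise distinct, so specialising $x_s^{(e)}=y_s^{(e)}=0$ for every $s\neq r$ leaves $\tilde{\gamma}_r\,[x_r^{(e)},y_r^{(e)}]\equiv_{\mathsf{G}}0$, and since $\tilde{\gamma}_r\neq0$ this gives $[x^{(e)},y^{(e)}]\equiv_{\mathsf{G}}0$ in $\mathfrak{A}$, i.e. $\mathfrak{A}_e$ is commutative. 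Hence $\mathfrak{A}$ is nilpotent or $\mathfrak{A}_e$ is commutative, as claimed.

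I do not expect a genuine obstacle inside this argument, because all the analytic weight has already been absorbed into Theorem \ref{3.20}: the Wedderburn--Malcev-type decomposition of Lemma \ref{teoIrina03}, the case analysis on $M_k(\mathbb{F}^{\sigma}[\mathsf{H}])$ in Lemma \ref{3.04}, and the graded Specht-type input of Lemma \ref{teoIrina01}. The only steps requiring care are the reduction in the first paragraph---verifying that a degree-$2$ identity on $\mathfrak{A}_e$ really lands as a type-\eqref{3.15} identity with every component degree equal to $e$, so that the commutators produced by Theorem \ref{3.20} are the \emph{neutral} ones $[x^{(e)},y^{(e)}]$ rather than mixed commutators---and the legitimacy of the final specialisation, which is guaranteed by the distinctness of the variables in $\widetilde{g}$.
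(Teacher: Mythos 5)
Your overall route --- convert the ordinary degree-$2$ identity on $\mathfrak{A}_e$ into a graded identity in degree-$e$ variables, normalise it via Lemma \ref{3.13}/Corollary \ref{3.22}, and feed the result to Theorem \ref{3.20} --- is exactly how the paper obtains Corollary \ref{3.05}: the paper gives no separate proof, presenting the corollary together with Theorem \ref{3.20} as ``evident'' from the observations preceding them. Your final specialisation $x_s^{(e)}=y_s^{(e)}=0$ for $s\neq r$, isolating $[x^{(e)},y^{(e)}]\equiv_{\mathsf{G}}0$ from $\widetilde{g}$, is also the intended reading.

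There is, however, one step that fails as written: the claim that ``because $\mathsf{o}(e)=1\neq2$, no square term survives.'' A degree-$2$ identity for $\mathfrak{A}_e$ may, after the normalisation of Lemma \ref{3.13}, be exactly $(x^{(e)})^2$: take $\mathfrak{A}=\mathfrak{A}_e$ to be the nonunital exterior algebra on two generators $a,b$, which satisfies $x^2\equiv0$ but has $[a,b]=2ab\neq0$. This algebra satisfies no nontrivial identity of the form $\sum_r\lambda_r[x_r^{(e)},y_r^{(e)}]_{\mathsf{f_G}}$, because $[x^{(e)},y^{(e)}]_{\mathsf{f_G}}=\mathsf{f_G}(e,e)\,[x^{(e)},y^{(e)}]$; so your $\widehat{g}$ would be forced to be the zero polynomial, it is then not ``of degree $2$,'' and Theorem \ref{3.20} cannot be applied to it. (The culprit is the condition $\mathsf{o}(\xi_k)=2$ in Corollary \ref{3.22}: it is derived from $\xi_k^2=e$, which is equally satisfied by $\xi_k=e$; the squares over $\xi_k=e$ only disappear under the non-nilpotency hypothesis of that corollary's second clause.) The gap closes in either of two ways. (a) Do not discard the squares: the polynomial produced by Lemma \ref{3.13} is already of type \eqref{3.15} with every $\xi_r=e$, since \eqref{3.15} explicitly allows the terms $\delta_s(x_s^{(e)})^2$, so Theorem \ref{3.20} applies directly and its dichotomy absorbs the square case into the nilpotent branch. (b) Case-split first: the proof of Lemma \ref{3.13} shows each summand is separately an identity, so if some $\delta_k\neq0$ then $a^2=0$ for all $a\in\mathfrak{A}_e$, whence $\mathfrak{A}$ is nilpotent by Lemma \ref{mardua4.7}; otherwise all square coefficients vanish and your argument proceeds unchanged. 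With either repair the proof is correct and coincides with the paper's.
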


Note that Corollary \ref{3.05} is a particular case of Corollary \ref{3.22}.

%
%

\subsection{The $\mathsf{G}$-variety of graded algebras whose the neutral components satisfy a polynomial identity of degree 2}\label{subsecvariety}

Let $g=g(x_1^{(e)}, \dots, x_n^{(e)})\in\mathbb{F}\langle X^\mathsf{G} \rangle$ be a graded polynomial of degree $2$. Here, let us now consider and study the graded variety $\mathfrak{W}^\mathsf{G}=\mathsf{var}^\mathsf{G}(g)$ of $\mathsf{G}$-graded associative algebras which satisfy $g$. Recall that, given a subset $S\subset\mathbb{F}\langle X^\mathsf{G} \rangle$, the $\mathsf{G}$-variety $\mathsf{var}^\mathsf{G}(S)$ generated by $S$ is the class of graded associative algebras that satisfy all polynomials of $S$, i.e. a $\mathsf{G}$-graded algebra $\mathfrak{A}$ belongs to $\mathsf{var}^\mathsf{G}(S)$ iff $f\equiv_\mathsf{G}0$ in $\mathfrak{A}$ for any $f\in S$. 

Assume that $\mathbb{F}$ is an algebraically closed field of characteristic zero and $\mathsf{G}$ is a finite abelian group. By Lemmas \ref{teoIrina02} and \ref{teoIrina03}, there exists a $\mathsf{G}\times\mathbb{Z}_2$-graded finite dimensional algebra $\mathfrak{A} = \mathfrak{B}\oplus\mathsf{J}$ such that 
\begin{equation*}
	\begin{split}
\mathfrak{W}^\mathsf{G}=\mathsf{var}^\mathsf{G}(\mathsf{E}^\mathsf{G}(\mathfrak{A})) \ ,
	\end{split}
\end{equation*}
where $\mathsf{J}=\mathsf{J}(\mathfrak{A})$ is the Jacobson radical of $\mathfrak{A}$, and $\mathfrak{B}=M_{k_1}(\mathbb{F}^{\sigma_1}[\mathsf{H}_1]) \times \cdots \times M_{k_p}(\mathbb{F}^{\sigma_p}[\mathsf{H}_p])$, with $\mathsf{H}_i\leq \mathsf{G}\times\mathbb{Z}_2$, $\sigma_i\in\mathsf{Z}^2(\mathsf{H}_i, \mathbb{F}^*)$, and $M_{k_i}(\mathbb{F}^{\sigma_i}[\mathsf{H}_i])$ is graded with an elementary-canonical $(\mathsf{G}\times\mathbb{Z}_2)$-grading. 
As $g\equiv_{\mathsf{G}}0$ in $\mathsf{E}^\mathsf{G}(\mathfrak{A})$, we have that $g\equiv0$ in $\mathsf{E}^\mathsf{G}(\mathfrak{A})_e=\mathfrak{A}_{(e,0)}\otimes_\mathbb{F}\mathsf{E}_0 +\mathfrak{A}_{(e,1)}\otimes_\mathbb{F}\mathsf{E}_1$, and consequently $g\equiv0$ in $\mathfrak{A}_{(e,0)}\otimes_\mathbb{F}\mathsf{E}_0$, and in particular $g\equiv0$ in $\mathfrak{A}_{(e,0)}$ (because $\mathsf{E}_0$ is commutative). Hence, by Lemma \ref{mardua3.9}, if $\mathfrak{A}$ is not nilpotent, then we can conclude that $\mathfrak{B}_{(e,0)}\subseteq\mathfrak{A}_{(e,0)}$ is not zero, otherwise $\mathfrak{A}_{(e,0)}$ is equal to $\mathsf{J}_{(e,0)}$, which is nilpotent, and this leads us to a contradiction (see Lemma \ref{mardua3.9}). It means that $g\equiv_{\mathsf{G}}0$ in $\mathsf{E}^\mathsf{G}(\mathfrak{A})$ implies that either $\mathsf{E}^\mathsf{G}(\mathfrak{A})$ is nilpotent or $g\equiv0$ in $\mathfrak{B}_{(e,0)}\neq\{0\}$. Note that if $\mathfrak{B}_{(e,0)}\neq\{0\}$ and $g\equiv0$ in $\mathfrak{B}_{(e,0)}$, then $g\equiv0$ in $(M_{k_i}(\mathbb{F}^{\sigma_i}[\mathsf{H}_i]))_{(e,0)}$ for all $i=1,\dots,p$.

The following theorem follows from above observations, and Lemmas \ref{3.13} and \ref{3.04}. It is natural to conclude that it improves Corollary \ref{3.05}, since Lemmas \ref{teoIrina01} and \ref{teoIrina02} are connected.

\begin{theorem}\label{3.21}
Let $\mathsf{G}$ be a finite abelian group, $\mathbb{F}$ an algebraically closed field of characteristic zero and $g=g(x_1^{(e)}, \dots, x_n^{(e)})\in\mathbb{F}\langle X^\mathsf{G} \rangle$ a graded polynomial of degree $2$. The $\mathsf{G}$-variety $\mathfrak{W}^\mathsf{G}$ generated by $g$ is equal to either $\mathsf{var}^\mathsf{G}([x^{(e)},y^{(e)}])$ or $\mathsf{var}^\mathsf{G}(N)$ for some nilpotent $\mathsf{G}$-graded algebra $N$.
\end{theorem}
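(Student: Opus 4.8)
The plan is to start from the reduction already set up above the statement: by Lemmas \ref{teoIrina02} and \ref{teoIrina03} we have $\mathfrak{W}^\mathsf{G}=\mathsf{var}^\mathsf{G}(\mathsf{E}^\mathsf{G}(\mathfrak{A}))$ for a finite dimensional $(\mathsf{G}\times\mathbb{Z}_2)$-graded algebra $\mathfrak{A}=\mathfrak{B}\oplus\mathsf{J}$, together with the dichotomy recorded just before the statement: $g\equiv_\mathsf{G}0$ in $\mathsf{E}^\mathsf{G}(\mathfrak{A})$ forces either $\mathsf{E}^\mathsf{G}(\mathfrak{A})$ to be nilpotent or $\mathfrak{B}_{(e,0)}\neq\{0\}$. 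If $\mathsf{E}^\mathsf{G}(\mathfrak{A})$ is nilpotent, I would simply set $N=\mathsf{E}^\mathsf{G}(\mathfrak{A})$: this is a nilpotent $\mathsf{G}$-graded algebra and $\mathfrak{W}^\mathsf{G}=\mathsf{var}^\mathsf{G}(N)$, which is the second alternative. Thus the whole content lies in the non-nilpotent case, where I would prove $\mathfrak{W}^\mathsf{G}=\mathsf{var}^\mathsf{G}([x^{(e)},y^{(e)}])$.

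Assume $\mathsf{E}^\mathsf{G}(\mathfrak{A})$ is not nilpotent. The key leverage is the contrapositive of Lemma \ref{mardua4.7}: since $\mathsf{E}^\mathsf{G}(\mathfrak{A})$ carries a finite $\mathsf{G}$-grading, has characteristic zero, and is not nilpotent, its neutral component $\mathsf{E}^\mathsf{G}(\mathfrak{A})_e$ cannot be nil of index $2$, so there is some $u\in\mathsf{E}^\mathsf{G}(\mathfrak{A})_e$ with $u^2\neq0$. Writing the given polynomial as $g=\sum_{r,s}\lambda_{rs}x_r^{(e)}x_s^{(e)}+\sum_k\gamma_k x_k^{(e)}$ and running the substitution arguments from the proof of Lemma \ref{3.13} inside $\mathsf{E}^\mathsf{G}(\mathfrak{A})$, I obtain $\gamma_k=0$, together with $\lambda_{kk}a^2=0$ and $\lambda_{rs}a_ra_s+\lambda_{sr}a_sa_r=0$ for all $a,a_r,a_s\in\mathsf{E}^\mathsf{G}(\mathfrak{A})_e$ (here every variable has degree $e$, so all monomials are homogeneous of degree $e$ and these are genuine relations in the neutral component). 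Evaluating the first relation at $a=u$ gives $\lambda_{kk}=0$, and evaluating the second at $a_r=a_s=u$ gives $(\lambda_{rs}+\lambda_{sr})u^2=0$, hence $\lambda_{rs}=-\lambda_{sr}$. Since $g$ has degree $2$ and all diagonal coefficients now vanish, some off-diagonal $\lambda_{rs}$ is nonzero, and the coefficient relations show that, \emph{as an element of the free algebra}, $g=\sum_{r<s}\lambda_{rs}[x_r^{(e)},x_s^{(e)}]$, a nonzero linear combination of ordinary commutators of neutral variables.

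From this normal form both inclusions are immediate. On one hand, $g$ is a sum of substitution instances of $[x^{(e)},y^{(e)}]$, so $g$ vanishes in every $\mathsf{G}$-graded algebra with commutative neutral component, giving $\mathsf{var}^\mathsf{G}([x^{(e)},y^{(e)}])\subseteq\mathfrak{W}^\mathsf{G}$. On the other hand, specializing $x_{r_0}^{(e)},x_{s_0}^{(e)}$ (for a pair with $\lambda_{r_0s_0}\neq0$) to arbitrary $a,b\in\mathsf{E}^\mathsf{G}(\mathfrak{A})_e$ and the remaining variables to $0$ collapses $g\equiv_\mathsf{G}0$ to $\lambda_{r_0s_0}[a,b]=0$, so $[x^{(e)},y^{(e)}]\equiv_\mathsf{G}0$ in $\mathsf{E}^\mathsf{G}(\mathfrak{A})$ and hence $\mathfrak{W}^\mathsf{G}=\mathsf{var}^\mathsf{G}(\mathsf{E}^\mathsf{G}(\mathfrak{A}))\subseteq\mathsf{var}^\mathsf{G}([x^{(e)},y^{(e)}])$. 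Combining the two inclusions settles the non-nilpotent case.

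The main obstacle is precisely the non-nilpotent case, and within it the decisive step is producing the element $u$ with $u^2\neq0$: this is exactly what upgrades the soft conclusion ``$g$ vanishes on $\mathsf{E}^\mathsf{G}(\mathfrak{A})_e$'' to the rigid conclusion that the quadratic part of $g$ is antisymmetric, i.e. a true commutator polynomial; without it the square terms could survive and the variety would be the nilpotent one instead. I would take care to check that the substitution $x_r^{(e)}=x_s^{(e)}=u$ is legitimate (repeated arguments are allowed) and that killing the diagonal terms does not annihilate the whole quadratic part — this is guaranteed because $g$ has degree $2$ while Lemma \ref{mardua4.7} forbids $\mathsf{E}^\mathsf{G}(\mathfrak{A})_e$ from being nil of index $2$, so the surviving nonzero coefficients are necessarily off-diagonal. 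A more computational alternative would feed $g\equiv_\mathsf{G}0$ in each simple factor $(M_{k_i}(\mathbb{F}^{\sigma_i}[\mathsf{H}_i]))_{(e,0)}$ into Lemma \ref{3.04}, item iii), to obtain commutativity of the neutral components factor by factor, but the single-element argument above is shorter and avoids the matrix bookkeeping.
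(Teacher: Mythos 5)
Your proof is correct and follows the paper's overall strategy: the reduction $\mathfrak{W}^\mathsf{G}=\mathsf{var}^\mathsf{G}(\mathsf{E}^\mathsf{G}(\mathfrak{A}))$ via Lemmas \ref{teoIrina02} and \ref{teoIrina03}, the nilpotent/non-nilpotent dichotomy, and the substitution computations of Lemma \ref{3.13}. The one step you organize differently is the elimination of the linear and diagonal coefficients of $g$: the paper's stated proof routes this through the graded-simple factors, i.e.\ it uses that $g\equiv0$ in each $(M_{k_i}(\mathbb{F}^{\sigma_i}[\mathsf{H}_i]))_{(e,0)}$ together with Lemma \ref{3.04}, whose idempotents $E_{ii}\eta_e$ supply the elements of nonzero square, whereas you apply Lemma \ref{mardua4.7} directly to $\mathsf{E}^\mathsf{G}(\mathfrak{A})$ to extract a single $u\in\mathsf{E}^\mathsf{G}(\mathfrak{A})_e$ with $u^2\neq0$ and then read off $\gamma_k=\lambda_{kk}=0$ and $\lambda_{rs}=-\lambda_{sr}$, so that $g=\sum_{r<s}\lambda_{rs}[x_r^{(e)},x_s^{(e)}]$ already in the free algebra. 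Your variant is slightly more self-contained — it avoids the matrix bookkeeping of Lemma \ref{3.04} and the explicit passage to the semisimple part — and it is essentially the same mechanism the paper sketches in the discussion preceding Theorem \ref{3.20}; what the paper's route buys in exchange is the sharper structural information recorded in Theorem \ref{3.20} (the $\mathsf{G}PI$-equivalence of $\mathfrak{A}/\mathsf{J}(\mathfrak{A})$ with a product of twisted group algebras), which your argument neither needs nor produces. Your two-inclusion verification that the exact normal form of $g$ forces $\mathsf{var}^\mathsf{G}(g)=\mathsf{var}^\mathsf{G}([x^{(e)},y^{(e)}])$ is the right way to conclude, and the nilpotent branch with $N=\mathsf{E}^\mathsf{G}(\mathfrak{A})$ is handled correctly.
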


Recall that, given any $\mathsf{G}$-graded algebra $\mathfrak{A}$, $\mathsf{var}^\mathsf{G}(\mathfrak{A})\coloneqq \mathsf{var}^\mathsf{G}(\mathsf{T}^{\mathsf{G}}(\mathfrak{A}))$. Beside that, taking $f\in\mathbb{F}\langle X^\mathsf{G} \rangle$ any graded polynomial identity for $\mathfrak{A}$, we have that $\mathfrak{A}\in\mathsf{var}^\mathsf{G}(f)$ and $\mathsf{var}^\mathsf{G}(\mathsf{T}^{\mathsf{G}}(\mathfrak{A}))\subseteq\mathsf{var}^\mathsf{G}(f)$. Using this fact, the next result generalizes Corollary \ref{3.05} and is an answer to Problem \ref{3.01}.
	\begin{corollary}\label{3.32}
Let $\mathsf{G}$ be a finite abelian group, $\mathbb{F}$ an algebraically closed field of characteristic zero, and $\mathfrak{A}$ any associative $\mathsf{G}$-graded algebra. If $\mathfrak{A}_e$ satisfies a polynomial identity of degree $2$, then either $\mathfrak{A}$ is a nilpotent algebra or $\mathfrak{A}_e$ is a commutative algebra.
	\end{corollary}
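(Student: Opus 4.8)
The plan is to derive Corollary \ref{3.32} directly from Theorem \ref{3.21}, the crucial gain over Corollary \ref{3.05} being that Theorem \ref{3.21} imposes \emph{no} finite-generation hypothesis. First I would reinterpret the ordinary identity of the neutral component as a graded identity of the whole algebra. Let $g=g(x_1,\dots,x_n)\in\mathbb{F}\langle X\rangle$ be the degree-$2$ polynomial identity satisfied by $\mathfrak{A}_e$, and form the graded polynomial $g=g(x_1^{(e)},\dots,x_n^{(e)})\in\mathbb{F}\langle X^\mathsf{G}\rangle$ by assigning $\mathsf{G}$-degree $e$ to every variable. Every substitution of homogeneous elements of degree $e$ into the $x_i^{(e)}$ lands in $\mathfrak{A}_e$, and $g\equiv0$ on $\mathfrak{A}_e$; hence $g$ is a graded polynomial identity of degree $2$ for $\mathfrak{A}$, all of whose monomials are $\mathsf{G}$-homogeneous of degree $e$. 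In particular $\mathfrak{A}\in\mathfrak{W}^\mathsf{G}=\mathsf{var}^\mathsf{G}(g)$.

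Next I would invoke Theorem \ref{3.21}: the $\mathsf{G}$-variety $\mathfrak{W}^\mathsf{G}$ determined by $g$ equals either $\mathsf{var}^\mathsf{G}([x^{(e)},y^{(e)}])$ or $\mathsf{var}^\mathsf{G}(N)$ for some nilpotent $\mathsf{G}$-graded algebra $N$, and I would split into these two exhaustive cases. If $\mathfrak{W}^\mathsf{G}=\mathsf{var}^\mathsf{G}([x^{(e)},y^{(e)}])$, then since $\mathfrak{A}\in\mathfrak{W}^\mathsf{G}$ we get $[x^{(e)},y^{(e)}]\equiv_\mathsf{G}0$ in $\mathfrak{A}$, which says exactly that any two elements of $\mathfrak{A}_e$ commute, i.e.\ $\mathfrak{A}_e$ is commutative. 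If instead $\mathfrak{W}^\mathsf{G}=\mathsf{var}^\mathsf{G}(N)$ with $N$ nilpotent, say $N^r=\{0\}$, then for every choice of degrees $\xi_1,\dots,\xi_r\in\mathsf{G}$ the monomial $x_1^{(\xi_1)}\cdots x_r^{(\xi_r)}$ is a graded identity of $N$, hence belongs to $\mathsf{T}^\mathsf{G}(N)$ and is satisfied by every member of the variety; applying this to $\mathfrak{A}\in\mathsf{var}^\mathsf{G}(N)$ forces any product of $r$ homogeneous elements of $\mathfrak{A}$ to vanish, and by expanding arbitrary elements into their homogeneous components we conclude $\mathfrak{A}^r=\{0\}$, so $\mathfrak{A}$ is nilpotent. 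Either way the dichotomy of the Corollary holds.

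The real content is packed into Theorem \ref{3.21} rather than into this deduction, so the argument above is short; the point I would emphasise is \emph{why} no finite-generation hypothesis appears. Corollary \ref{3.05} relied on the Wedderburn--Malcev decomposition (Lemma \ref{teoIrina03}) together with the structural Lemma \ref{3.04}, and that route needs finite dimensionality, hence a finitely generated $\mathfrak{A}$. By contrast, Theorem \ref{3.21} describes the whole variety $\mathsf{var}^\mathsf{G}(g)$ — an object determined by the single polynomial $g$, independent of $\mathfrak{A}$ — using the Grassmann-envelope form of the graded Specht theorem (Lemma \ref{teoIrina02}), which applies to arbitrary $\mathsf{G}PI$-algebras. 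Thus the only subtleties to check carefully are that the two cases of Theorem \ref{3.21} are genuinely exhaustive and that membership in $\mathsf{var}^\mathsf{G}(N)$ for nilpotent $N$ really propagates nilpotency to $\mathfrak{A}$ via the grade-wise monomial identities above; both are routine once the reduction to Theorem \ref{3.21} is in place.
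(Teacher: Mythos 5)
Your proposal is correct and follows essentially the same route as the paper: the paper also deduces Corollary \ref{3.32} from Theorem \ref{3.21} by viewing the degree-$2$ identity of $\mathfrak{A}_e$ as a graded identity $g(x_1^{(e)},\dots,x_n^{(e)})$ of $\mathfrak{A}$, so that $\mathfrak{A}\in\mathsf{var}^\mathsf{G}(g)$, and then reading off the two alternatives. Your explicit verification that membership in $\mathsf{var}^\mathsf{G}(N)$ for nilpotent $N$ forces $\mathfrak{A}^r=\{0\}$ via the monomial identities $x_1^{(\xi_1)}\cdots x_r^{(\xi_r)}$ is a detail the paper leaves implicit, and it is carried out correctly.
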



\subsection{Graded rings with the central neutral components}\label{subsecringcentral}

Here, we denote by $\mathsf{S}$ a cancellative monoid (i.e. a monoid which satisfies $\xi\zeta=\tau\zeta$ iff $\xi=\tau$, and $\tilde\zeta \tilde\xi=\tilde\zeta \tilde\tau$ iff $\tilde\xi=\tilde\tau$, for any $\xi,\tilde\xi,\zeta,\tilde\zeta,\tau,\tilde\tau\in\mathsf{S}$), by $\mathfrak{R}$ an associative ring with a finite $\mathsf{S}$-grading $\Gamma$, and by $\mathcal{Z}(\mathfrak{R})$ the center of $\mathfrak{R}$. Let us present some results involving associative rings graded by a cancellative monoid whose neutral component is central. 
%

\begin{theorem}\label{3.06}
Let $\mathsf{S}$ be a cancellative monoid, and $\mathfrak{R}$ an associative ring with a finite $\mathsf{S}$-grading of order $d$. If $\mathfrak{R}_e$ is central in $\mathfrak{R}$ and $d\in\{1,2,3\}$, then $[x_1, \dots, x_{d+1}]\equiv0$ in $\mathfrak{R}$.
\end{theorem}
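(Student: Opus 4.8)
The plan is to prove the slightly stronger statement that the $d$-fold commutator is \emph{central}, i.e. $[x_1,\dots,x_d]\in\mathcal{Z}(\mathfrak{R})$ for all $x_i\in\mathfrak{R}$. Granting this, the theorem follows immediately: $[x_1,\dots,x_{d+1}]=[[x_1,\dots,x_d],x_{d+1}]$ is then the commutator of a central element with $x_{d+1}$, hence $0$. Since the left-normed commutator is multilinear, it suffices to evaluate it on homogeneous arguments $x_i\in\mathfrak{R}_{\alpha_i}$ with $\alpha_i\in\mathsf{Supp}(\Gamma)$. Moreover, if some $\alpha_i=e$ then $x_i$ is central, and the bracketing step in which $x_i$ first enters yields $0$; so I may assume every $\alpha_i$ lies in $\mathsf{Supp}(\Gamma)\setminus\{e\}$, a set of $d-1$ elements.

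First I would record the constraints that left/right cancellation places on products of non-neutral degrees. For $\alpha,\beta\neq e$ cancellation rules out $\alpha\beta=\alpha$ and $\alpha\beta=\beta$. Hence, writing $\mathsf{Supp}(\Gamma)\setminus\{e\}=\{\xi\}$ when $d=2$ and $=\{\xi,\zeta\}$ when $d=3$, one gets $\xi^2\in\{e,\text{outside}\}$ for $d=2$, and $\xi\zeta,\zeta\xi\in\{e,\text{outside}\}$, $\xi^2\in\{e,\zeta,\text{outside}\}$, $\zeta^2\in\{e,\xi,\text{outside}\}$ for $d=3$, where ``outside'' means the relevant component is $\{0\}$. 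A product whose degree is $e$ lands in $\mathfrak{R}_e\subseteq\mathcal{Z}(\mathfrak{R})$; in particular the mixed commutators satisfy $[\mathfrak{R}_\xi,\mathfrak{R}_\zeta],[\mathfrak{R}_\zeta,\mathfrak{R}_\xi]\subseteq\mathfrak{R}_e$. For $d=1$ the ring equals its own centre $\mathfrak{R}_e$, so it is commutative and $[x_1,x_2]\equiv0$. For $d=2$, inputs of degree $\xi$ give $[x_1,x_2]\in\mathfrak{R}_{\xi^2}$, which is either inside $\mathfrak{R}_e$ or is $\{0\}$; thus $[x_1,x_2]\in\mathcal{Z}(\mathfrak{R})$ and $[x_1,x_2,x_3]\equiv0$.

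For $d=3$ I would run a case analysis on the degree pattern of $(x_1,x_2,x_3)$, tracking the homogeneous component of each partial bracket. If $[x_1,x_2]$ is mixed it is already central and annihilates $x_3$; the same happens in pattern $(\xi,\xi,\cdot)$ when $\xi^2=e$ or $\xi^2$ is outside, and symmetrically for $\zeta$. Degree-tracking therefore settles every pattern \emph{except} the configuration $\xi^2=\zeta,\ \zeta^2=\xi$ — which by cancellation forces $\xi^3=e$, so the support spans a copy of $\mathbb{Z}_3$ — in the patterns $(\xi,\xi,\zeta)$ and $(\zeta,\zeta,\xi)$, where tracking only locates the triple commutator in $\mathfrak{R}_\xi$ (resp. $\mathfrak{R}_\zeta$) rather than in $\mathfrak{R}_e$.

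This $\mathbb{Z}_3$-type configuration is the main obstacle, and I expect it to demand an explicit computation rather than pure bookkeeping. The mechanism is that any product of a degree-$\xi$ and a degree-$\zeta$ factor lands in $\mathfrak{R}_e$ and is hence central, whence associativity yields identities of the form $(uw)v=(wv)u$ for $u,v\in\mathfrak{R}_\xi$, $w\in\mathfrak{R}_\zeta$. Expanding $[x_1,x_2,x_3]$ with $x_1,x_2\in\mathfrak{R}_\xi$ and $x_3\in\mathfrak{R}_\zeta$, and pulling the central two-factor products aside, rewrites it as $\beta x_1-\alpha x_2$ with $\alpha,\beta\in\mathfrak{R}_e$; the associativity identity then gives $\beta x_1=\alpha x_2$, so the triple commutator is in fact $0$. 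Treating the symmetric pattern the same way, I conclude $[x_1,x_2,x_3]\in\mathfrak{R}_e\subseteq\mathcal{Z}(\mathfrak{R})$ in every case, which closes the proof. The delicate point throughout is keeping the grading bookkeeping honest while systematically using the centrality of $\mathfrak{R}_e$; the one genuinely non-formal ingredient is the cancellation argument in the $\mathbb{Z}_3$ case, and it is precisely the failure of such a short chain to reach $\mathfrak{R}_e$ that makes the statement break down for $d\geq4$.
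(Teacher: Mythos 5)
Your argument is essentially the paper's: reduce to homogeneous inputs, use cancellativity to force all cross-products of distinct non-neutral degrees into the central component $\mathfrak{R}_e$, and kill the one recalcitrant pattern $(\xi,\xi,\zeta)$ by the explicit associativity computation with central two-factor products --- this is exactly the computation the paper performs for $[a_\xi,b_\xi,c_\zeta]$, and your strengthened claim that $[x_1,\dots,x_d]$ is central is precisely what the paper establishes (it even remarks afterwards that $[x,y,z]$ is a central polynomial for $d=3$). Your isolation of the $\mathbb{Z}_3$-type configuration $\xi^2=\zeta$, $\zeta^2=\xi$ as the only case where degree bookkeeping fails is a correct and slightly cleaner organization of the same case analysis.

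There is one overlooked case: you assume $e\in\mathsf{Supp}(\Gamma)$ when you assert that $\mathsf{Supp}(\Gamma)\setminus\{e\}$ has $d-1$ elements. Nothing in the hypotheses forces $\mathfrak{R}_e\neq\{0\}$, and if $\mathfrak{R}_e=\{0\}$ the support consists of $d$ non-neutral degrees, so for $d=3$ your two-degree case analysis does not apply. The paper dispatches this first via Lemma~\ref{mardua3.9}: $\mathfrak{R}_e$ nilpotent of index $1$ forces $\mathfrak{R}^{d+1}=\{0\}$ (a pigeonhole on partial products of degrees, using cancellativity), whence $[x_1,\dots,x_{d+1}]\equiv0$ trivially. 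Adding that one line closes the gap.
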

\begin{proof}
Firstly, by Lemma \ref{mardua3.9}, if $\mathfrak{R}_e=\{0\}$, then $\mathfrak{R}^{d+1}=\{0\}$. In particular, $[x_1,\dots, x_{d+1}]\equiv0$ in $\mathfrak{R}$ in this case.

Assume that $\mathfrak{R}_e\neq\{0\}$. For $d=1$, we have $\mathfrak{R}=\mathfrak{R}_e$, and hence, if $\mathfrak{R}_e\subseteq\mathcal{Z}(\mathfrak{R})$, then $\mathfrak{R}$ is commutative, i.e. $[x_1, x_2]\equiv0$ in $\mathfrak{R}$.

Suppose $d=2$ and put $\mathsf{Supp}(\Gamma)=\{e, \xi\}$, where $\xi\neq e$, then, either $\xi^2=e$ or $\xi^2\notin \mathsf{Supp}(\Gamma)$, because $\mathsf{S}$ is cancellative. Anyway, $(\mathfrak{R}_\xi)^2\subseteq \mathfrak{R}_e$. Given $a,b,c\in \mathfrak{R}$, we can write $a=a_e+a_\xi$, $b=b_e+b_\xi$. Since $\mathfrak{R}_e\subseteq \mathcal{Z}(\mathfrak{R})$, it follows that
	\begin{equation*}
[a, b, c]= [a_e+a_\xi, b_e+b_\xi, c]= [a_\xi, b_\xi, c]=[[a_\xi, b_\xi], c]=0
.
	\end{equation*}
Therefore, $[a, b, c]=0$ for any $a,b,c\in\mathfrak{R}$.

Now, assume $d=3$, and put $\mathsf{Supp}(\Gamma)=\{e, \xi, \zeta\}$. Consider the elements $\xi\zeta, \zeta\xi\in \mathsf{S}$. \textbf{Claim:} either $\zeta\xi=\xi\zeta=e$ or $\zeta\xi,\xi\zeta\notin\mathsf{Supp}(\Gamma)$. In fact, since $\mathsf{S}$ is cancellative, we have $\xi\zeta, \zeta\xi\notin\{\zeta,\xi\}$. Hence, if $\zeta\xi\in\mathsf{Supp}(\Gamma)$, then $\zeta\xi=e$, and hence, $\zeta\xi\zeta=\zeta$, and by cancellation law, it follows that $\xi\zeta=e$. Similarly, $\xi\zeta\in\mathsf{Supp}(\Gamma)$ implies $\xi\zeta=\zeta\xi=e$. Anyway, we have $\mathfrak{R}_\xi \mathfrak{R}_\zeta, \mathfrak{R}_\zeta \mathfrak{R}_\xi \subseteq \mathfrak{R}_e$.

Given $a,b,c\in \mathfrak{R}$, we can write $a=a_e+a_\xi+a_\zeta$, $b=b_e+b_\xi+b_\zeta$, and $c=c_e+c_\xi+c_\zeta$. Hence, since $\mathfrak{R}_e\subseteq \mathcal{Z}(\mathfrak{R})$ and $\mathfrak{R}_\xi \mathfrak{R}_\zeta, \mathfrak{R}_\zeta \mathfrak{R}_\xi \subseteq \mathfrak{R}_e$, we have that 
\begin{equation*}
	\begin{split}
[a, b, c] &= [a_e+a_\xi+a_\zeta, b_e+b_\xi+b_\zeta, c]= [a_\xi+a_\zeta, b_\xi+b_\zeta, c] \\
		  &= [a_\xi, b_\xi, c]+[a_\zeta, b_\zeta, c]+[a_\zeta, b_\xi, c]+[a_\xi, b_\zeta, c]  
		  =[a_\xi, b_\xi, c]+[a_\zeta, b_\zeta, c] \\
		  &= [a_\xi, b_\xi, c_e+c_\xi+c_\zeta]+[a_\zeta, b_\zeta, c_e+c_\xi+c_\zeta] 
		  = [a_\xi, b_\xi, c_\xi+c_\zeta]+[a_\zeta, b_\zeta, c_\xi+c_\zeta] \\
		  &= [a_\xi, b_\xi, c_\xi]+[a_\xi, b_\xi, c_\zeta]+[a_\zeta, b_\zeta, c_\xi]+[a_\zeta, b_\zeta, c_\zeta] \\
		  &= ([a_\xi, b_\xi, c_\xi]+[a_\zeta, b_\zeta, c_\zeta])+([a_\xi, b_\xi, c_\zeta]+[a_\zeta, b_\zeta, c_\xi]) .
	\end{split}
\end{equation*}
Notice that $[a_\xi, b_\xi, c_\zeta] +[a_\zeta, b_\zeta, c_\xi] =0$, since
%
%
%
\begin{equation*}
	\begin{split}
[a_\xi, b_\xi, c_\zeta]&=[a_\xi b_\xi- b_\xi a_\xi, c_\zeta]=[a_\xi b_\xi, c_\zeta]- [b_\xi a_\xi, c_\zeta]
		  = (a_\xi b_\xi) c_\zeta - c_\zeta (a_\xi b_\xi) -(b_\xi a_\xi)c_\zeta+c_\zeta (b_\xi a_\xi) \\
		  &= a_\xi b_\xi c_\zeta - (c_\zeta a_\xi) b_\xi -b_\xi a_\xi c_\zeta+(c_\zeta b_\xi) a_\xi
		  = a_\xi b_\xi c_\zeta - b_\xi(c_\zeta a_\xi) -b_\xi a_\xi c_\zeta+ a_\xi(c_\zeta b_\xi)\\
		  &= a_\xi b_\xi c_\zeta - (b_\xi c_\zeta) a_\xi -b_\xi a_\xi c_\zeta+(a_\xi c_\zeta) b_\xi
		  = a_\xi b_\xi c_\zeta - a_\xi (b_\xi c_\zeta) -b_\xi a_\xi c_\zeta+ b_\xi(a_\xi c_\zeta)=0 
	\end{split}
\end{equation*}
and
\begin{equation*}
	\begin{split}
[a_\zeta, b_\zeta, c_\xi] &=[a_\zeta b_\zeta-b_\zeta a_\zeta, c_\xi]=[a_\zeta b_\zeta, c_\xi]-[b_\zeta a_\zeta, c_\xi] 
		  = (a_\zeta b_\zeta) c_\xi - c_\xi (a_\zeta b_\zeta) - (b_\zeta a_\zeta) c_\xi+ c_\xi(b_\zeta a_\zeta) \\
		  &= a_\zeta b_\zeta c_\xi - (c_\xi a_\zeta) b_\zeta - b_\zeta a_\zeta c_\xi + (c_\xi b_\zeta) a_\zeta 
		  = a_\zeta b_\zeta c_\xi - b_\zeta (c_\xi a_\zeta) - b_\zeta a_\zeta c_\xi + a_\zeta (c_\xi b_\zeta)  \\
		  &= a_\zeta b_\zeta c_\xi - (b_\zeta c_\xi) a_\zeta - b_\zeta a_\zeta c_\xi + (a_\zeta c_\xi) b_\zeta  
		  = a_\zeta b_\zeta c_\xi - a_\zeta (b_\zeta c_\xi) - b_\zeta a_\zeta c_\xi + b_\zeta (a_\zeta c_\xi) = 0 \ .
	\end{split}
\end{equation*}
Hence, $[a, b, c] = [a_\xi, b_\xi, c_\xi]+[a_\zeta, b_\zeta, c_\zeta]$. Observe that $\xi^2\neq \xi$ and $\zeta^2\neq \zeta$, because $\xi\neq e$ and $\zeta\neq e$. And so $\mathfrak{R}_\xi \mathfrak{R}_\xi \subseteq \mathfrak{R}_e$ or $\mathfrak{R}_\xi \mathfrak{R}_\xi \subseteq\mathfrak{R}_\zeta$, and $\mathfrak{R}_\zeta \mathfrak{R}_\zeta \subseteq \mathfrak{R}_e$ or $\mathfrak{R}_\zeta \mathfrak{R}_\zeta \subseteq \mathfrak{R}_\xi$. If $\mathfrak{R}_\xi \mathfrak{R}_\xi \subseteq \mathfrak{R}_\zeta$, then $\xi^2=\zeta$ or $\xi^2\notin\mathsf{Supp}(\Gamma)$, and thus, either $\xi^2\notin\mathsf{Supp}(\Gamma)$, or $\xi^3\notin\mathsf{Supp}(\Gamma)$ or $\xi^3=e$, since $\xi\zeta, \zeta\xi\notin\{\zeta,\xi\}$. Consequently, we deduce that either $(\mathfrak{R}_\xi)^2\subseteq\mathfrak{R}_e$ or $(\mathfrak{R}_\xi)^3\subseteq\mathfrak{R}_e$. From this, either $[a_\xi, b_\xi, c_\xi]=0$ or $[a_\xi, b_\xi, c_\xi]\in\mathfrak{R}_e$, for any $a, b, c \in \mathfrak{R}$. Analogously, we deduce that either $[a_\zeta, b_\zeta, c_\zeta]=0$ or $[a_\zeta, b_\zeta, c_\zeta]\in\mathfrak{R}_e$, for any $a, b, c \in \mathfrak{R}$. In any case, $[a_\xi, b_\xi, c_\xi],[a_\zeta, b_\zeta, c_\zeta]\in\mathfrak{R}_e$. Therefore, $[a,b,c]\in \mathfrak{R}_e$ for $a, b, c\in \mathfrak{R}$, and so $[a, b, c, d]=0$ for any $a, b, c, d\in\mathfrak{R}$. The result follows.
\end{proof}
   
From the proof above, observe that $[\mathfrak{R},\mathfrak{R},\mathfrak{R}]\subseteq\mathcal{Z}(\mathfrak{R})$, and so $[x,y,z]$ is a central polynomial for $\mathfrak{R}$. On the other hand, in Theorem \ref{3.06}, if $\mathsf{S}=\mathbb{Z}_2$ (resp. $\mathsf{S}=\mathbb{Z}_3$), then any $\mathsf{S}$-graded ring $\mathfrak{R}$ with the central neutral component satisfies the polynomial identity $[x_1,x_2,x_3]=0$ (resp. $[x_1,x_2,x_3,x_4]=0$).

\begin{corollary}\label{3.07}
Let $\mathsf{S}$ be a group and $\mathfrak{R}$ a ring with an $\mathsf{S}$-grading $\Gamma$. Let $P$ be a normal subgroup of $\mathsf{S}$, and $\overline{\Gamma}: \mathfrak{R}=\bigoplus_{\bar{\xi}\in\mathsf{S}/P} \mathfrak{R}_{\bar{\xi}}$ the $\mathsf{S}/P$-grading induced by $\Gamma$. Suppose that $\overline{\Gamma}$ has a finite support of order $d$. If $\mathfrak{R}_{\bar{e}}=\bigoplus_{p\in P} \mathfrak{R}_p\subseteq\mathcal{Z}(\mathfrak{R})$ and $d\in\{1,2,3\}$, then $[x_1,\dots,x_{d+1}]\equiv0$ in $\mathfrak{R}$.
\end{corollary}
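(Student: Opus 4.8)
The plan is to reduce the statement directly to Theorem \ref{3.06}, applied not to the original $\mathsf{S}$-grading but to the quotient grading $\overline{\Gamma}$. First I would record that, since $P$ is normal in $\mathsf{S}$, the quotient $\mathsf{S}/P$ is a group, and every group is a cancellative monoid; hence $\mathsf{S}/P$ is an admissible grading monoid for Theorem \ref{3.06}. Next I would check that $\overline{\Gamma}$ really is an $\mathsf{S}/P$-grading of $\mathfrak{R}$. Writing $\mathfrak{R}_{\bar\xi}=\bigoplus_{\eta\in\bar\xi}\mathfrak{R}_\eta$, take homogeneous elements $a\in\mathfrak{R}_{\xi p}$ and $b\in\mathfrak{R}_{\zeta q}$ with $p,q\in P$; then $ab\in\mathfrak{R}_{\xi p\zeta q}$, and normality of $P$ gives $\xi p\zeta q=(\xi\zeta)(\zeta^{-1}p\zeta)q\in(\xi\zeta)P$, so $ab\in\mathfrak{R}_{\overline{\xi\zeta}}$. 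Extending bilinearly yields $\mathfrak{R}_{\bar\xi}\mathfrak{R}_{\bar\zeta}\subseteq\mathfrak{R}_{\overline{\xi\zeta}}$, which is precisely the compatibility required of an $\mathsf{S}/P$-grading.

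With this set up, I would verify the remaining hypotheses of Theorem \ref{3.06} for the $(\mathsf{S}/P)$-graded ring $\mathfrak{R}$. Its neutral component is $\mathfrak{R}_{\bar e}=\bigoplus_{p\in P}\mathfrak{R}_p$, which by assumption is contained in $\mathcal{Z}(\mathfrak{R})$, so the central-component condition holds verbatim; and the support of $\overline{\Gamma}$ has order $d\in\{1,2,3\}$ by hypothesis, so the grading is finite of order $d$ with $d$ in the allowed range. All the conditions of Theorem \ref{3.06} are therefore satisfied, and I would conclude $[x_1,\dots,x_{d+1}]\equiv0$ in $\mathfrak{R}$ immediately from that theorem.

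I expect no genuine obstacle here: the corollary is essentially a repackaging of Theorem \ref{3.06} through the coset construction, and the only real content is the bookkeeping that the induced decomposition is a grading by the \emph{group} $\mathsf{S}/P$. Normality of $P$ is exactly what makes the product rule respect cosets, and the fact that $\mathsf{S}/P$ is a group (not merely a monoid) is what supplies the cancellativity that Theorem \ref{3.06} demands. The one point I would double-check is that the centrality hypothesis is phrased relative to the full ring $\mathfrak{R}$ rather than relative to the coarser graded pieces; it is indeed stated that way, so centrality of $\mathfrak{R}_{\bar e}$ transfers without modification, and the argument goes through cleanly.
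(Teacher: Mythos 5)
Your proposal is correct and is exactly the reduction the paper intends: the corollary is stated without proof precisely because it follows by applying Theorem \ref{3.06} to the induced $\mathsf{S}/P$-grading, whose neutral component is $\bigoplus_{p\in P}\mathfrak{R}_p$ and whose support is finite of order $d$ even though $\Gamma$ itself need not be finite. Your verification that normality of $P$ makes $\overline{\Gamma}$ a genuine $\mathsf{S}/P$-grading and that a group is a cancellative monoid covers all the bookkeeping.
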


It is important to note that $\Gamma$ in the previous corollary is not necessarily a finite $\mathsf{S}$-grading.

Below, we exhibit two examples which show that the condition ``$d\in\{1,2,3\}$'' in Theorem \ref{3.06} is necessary, where $d$ is the order of the support of $\mathsf{G}$-grading on $\mathfrak{R}$. Anyway, we show that Theorem \ref{3.06}, in general, does not hold when $d\geq4$.

\begin{example}
	Let $\mathcal{K}=\mathbb{Z}_2\times\mathbb{Z}_2$ be the Klein group and $\mathfrak{B}=M_2(\mathbb{F})$ the algebra of matrices of order $2$ over $\mathbb{F}$ of Example \ref{3.17}, with its natural $\mathcal{K}$-grading.  
	Notice that $\mathfrak{B}$ satisfies the $\mathcal{K}$-graded polynomial identities $[x^{(e)},y^{(\xi)}]$ for any $\xi\in \mathcal{K}$, where $e=(\bar0,\bar0)$ is the neutral element of $\mathcal{K}$, but $[x_1,x_2,\dots,x_n]$ is not a (ordinary) polynomial identity for $\mathfrak{B}$, for all $n\in\mathbb{N}$, since 
	\begin{equation*}
	[E_{12},\underbrace{E_{22},\dots, E_{22}}_{(n-1)-times}]=E_{12}\neq0 \ ,
	\end{equation*}
for all $n\in\mathbb{N}$.
\end{example}

\begin{example}\label{3.27}
Let $\mathcal{K}=\mathbb{Z}_2\times\mathbb{Z}_2$, $\mathbb{F}$ a field with $\mathsf{char}(\mathbb{F})\neq2$, and $\mathbb{H}$ a \textit{Quaternion algebra} over $\mathbb{F}$, i.e. $\mathbb{H}=\{a1+b\mathsf{i}+c\mathsf{j}+d\mathsf{k} : a,b,c,d\in\mathbb{F}\}=\mathbb{F}\left( \mathsf{i}, \mathsf{j}, \mathsf{k}\right)$, where $\mathsf{i}^2=\mathsf{j}^2=\mathsf{k}^2=-1$, and $\mathsf{i}\mathsf{j}=-\mathsf{j}\mathsf{i}=\mathsf{k}$, and $1$ is the unity. The algebra $\mathbb{H}$ has a natural $\mathcal{K}$-grading given by $\mathbb{H}=\mathbb{H}_{(\bar0,\bar0)}\oplus\mathbb{H}_{(\bar0,\bar1)}\oplus\mathbb{H}_{(\bar1,\bar0)}\oplus\mathbb{H}_{(\bar1,\bar1)}$, where $\mathbb{H}_{(\bar0,\bar0)}=\mathsf{span}_\mathbb{F}\{1 \}$, $\mathbb{H}_{(\bar0,\bar1)}=\mathsf{span}_\mathbb{F}\{\mathsf{i} \}$, $\mathbb{H}_{(\bar1,\bar0)}=\mathsf{span}_\mathbb{F}\{\mathsf{j} \}$ and $\mathbb{H}_{(\bar1,\bar1)}=\mathsf{span}_\mathbb{F}\{\mathsf{k} \}$. Obviously $\mathbb{H}_{(\bar0,\bar0)}$ is central in $\mathbb{H}$, but $\mathbb{H}$ is not a nilpotent algebra, since $\mathbb{H}$ is a division algebra. Moreover, $\mathbb{H}$ does not satisfy the identity polynomial $[x_1, x_2,\dots,x_n]$, for all $n\in\mathbb{N}$, $n\geq2$, since 
	\begin{equation*}
	[\mathsf{i}, \underbrace{\mathsf{j}, \mathsf{j},\dots, \mathsf{j}}_{(n-1)-times}]=2^{(n-1)}\mathsf{i}\mathsf{j}^{(n-1)}=\pm 2^{(n-1)} \cdot 
		\begin{cases}
     \mathsf{k} \ ,& \mbox{if } n \mbox{ is even} \\
     \mathsf{i} \ ,& \mbox{if } n \mbox{ is odd}  
		\end{cases} \ .
	\end{equation*}
\end{example}

In the next proposition, let us use Example \ref{3.27} to build an algebra $\mathfrak{A}$ with a finite $\mathsf{S}$-grading of order $d\geq4$ such that its neutral component is central, but $[x_1,\dots,x_m]$ is not its polynomial identity for $\mathfrak{A}$, for all $m\in\mathbb{N}$.

\begin{proposition}\label{3.28}
For all integer $d\geq4$, there exists an algebra $\mathfrak{A}$ with an $\mathsf{S}$-grading of order $d$ such that $\mathfrak{A}_e$ is central in $\mathfrak{A}$, but the polynomial $[x_1,\dots,x_m]$ is not an identity for $\mathfrak{A}$, for all $m\in\mathbb{N}$.
\end{proposition}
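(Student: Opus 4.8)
The plan is to reduce everything to the Quaternion algebra $\mathbb{H}$ of Example \ref{3.27}, which already settles the case $d=4$, and then to \emph{pad} the grading with a trivial annihilator piece so as to raise the order of the support to any prescribed $d>4$ without destroying either the centrality of the neutral component or the failure of the Lie-nilpotency identities.

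For $d=4$ I would simply take $\mathfrak{A}=\mathbb{H}$ with its natural $\mathcal{K}$-grading, $\mathcal{K}=\mathbb{Z}_2\times\mathbb{Z}_2$: its support has exactly four elements, $\mathfrak{A}_e=\mathsf{span}_\mathbb{F}\{1\}$ is central, and, as computed in Example \ref{3.27}, $[x_1,\dots,x_m]$ is not an identity of $\mathbb{H}$ for any $m\geq2$. For $d>4$ I would set $\mathsf{S}=\mathcal{K}\times\mathbb{Z}$, an abelian group and hence a cancellative monoid, and build $\mathfrak{A}=\mathbb{H}\oplus W$ as a vector space, where $W=\mathsf{span}_\mathbb{F}\{w_1,\dots,w_{d-4}\}$. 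I would keep the quaternion product on $\mathbb{H}$ and declare $W\cdot\mathfrak{A}=\mathfrak{A}\cdot W=\{0\}$, so that $W$ lies in the two-sided annihilator. The grading is then obtained by placing $\mathbb{H}$ inside the subgroup $\mathcal{K}\times\{0\}$ exactly as in Example \ref{3.27}, and by declaring $w_i\in\mathfrak{A}_{((\bar0,\bar0),i)}$ for $i=1,\dots,d-4$. Consequently $\mathsf{Supp}(\Gamma)$ consists of the four elements of $\mathcal{K}\times\{0\}$ together with the $d-4$ distinct elements $((\bar0,\bar0),i)$, giving $|\mathsf{Supp}(\Gamma)|=d$.

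It then remains to verify the three required properties. First, the grading axiom $\mathfrak{A}_\xi\mathfrak{A}_\zeta\subseteq\mathfrak{A}_{\xi\zeta}$ holds because every nonzero product lands inside $\mathbb{H}$, which is graded by $\mathcal{K}\times\{0\}$, while every product involving some $w_i$ is zero and hence lies in any component. Second, $\mathfrak{A}_e=\mathsf{span}_\mathbb{F}\{1\}$ is central: the unity $1$ of $\mathbb{H}$ commutes with all of $\mathbb{H}$, and $1\cdot w_i=0=w_i\cdot1$ because $W$ is annihilated. Third, $\mathbb{H}$ is a subalgebra of $\mathfrak{A}$, so any identity of $\mathfrak{A}$ is an identity of $\mathbb{H}$; since $[x_1,\dots,x_m]$ is not an identity of $\mathbb{H}$ for any $m$ by Example \ref{3.27}, it cannot be an identity of $\mathfrak{A}$ either.

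The one point deserving genuine care, and the main (modest) obstacle, is confirming that centrality of $\mathfrak{A}_e$ survives the padding: one must observe both that $1$ need not act as a global unit, since $\mathfrak{A}$ is not assumed unital and $1$ annihilates $W$, and that the chosen grading elements $((\bar0,\bar0),i)$ are pairwise distinct and distinct from the four $\mathcal{K}$-components, which is immediate from the $\mathbb{Z}$-coordinate. Everything else is routine bookkeeping, and the construction specializes to Example \ref{3.27} when $d=4$ (where $W=\{0\}$), so the two cases can in fact be presented uniformly.
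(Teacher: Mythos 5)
Your construction is correct and is essentially the paper's own argument: both proofs take the quaternion algebra of Example \ref{3.27} and pad it with a zero-multiplication (nilpotent) summand placed in new homogeneous degrees so as to inflate the support to any prescribed $d\geq 4$, while centrality of the neutral component and the failure of $[x_1,\dots,x_m]$ are inherited from $\mathbb{H}$. The only cosmetic difference is the grading monoid --- you use $\mathcal{K}\times\mathbb{Z}$ with the padding in degrees $((\bar 0,\bar 0),i)$, whereas the paper grades $\mathbb{H}\times\mathfrak{B}^{n}$ by $(\mathbb{Z}_2)^{n+2}$ --- and both choices are cancellative, so either works.
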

\begin{proof}
Let $\mathbb{H}$ be the Quaternion algebra of Example \ref{3.27}. Now, consider that $\mathfrak{B}=\mathsf{span}_{\mathbb{F}} \{x\}$ is a nilpotent algebra, where $x\neq0$ and $x^2=0$. Note that  $\mathfrak{B}=\mathfrak{B}_{\bar{1}}$ is a $\mathbb{Z}_2$-grading on $\mathfrak{B}$. Take the algebra $\mathfrak{A}_1=\mathbb{H}\times\mathfrak{B}$ (the direct product of the algebras $\mathbb{H}$ and $\mathfrak{B}$) with a $(\mathbb{Z}_2)^3$-grading $\Gamma_1$ defined by 
$(\mathfrak{A}_1)_{(\bar{i},\bar{j},\bar{l})}=\mathbb{H}_{(\bar{i},\bar{j})}\times\mathfrak{B}_{\bar{l}}$, for all $\bar{i}, \bar{j}, \bar{l}\in\mathbb{Z}_2$. The support of $\Gamma_1$ has order $5$, and $(\mathfrak{A}_1)_{e_1}$ is central in $\mathfrak{A}_1$, where $e_1=(\bar0,\bar0,\bar0)$. Since $\mathbb{H}\cong\mathbb{H}\times\{0\}\subset\mathfrak{A}_1$, it follows from Example \ref{3.27} that $[x_1,\dots,x_{m}]\not\equiv0$ in $\mathfrak{A}_1$ for all $m\in\mathbb{N}$, $m\geq2$. 
Now, take the algebra $\mathfrak{A}_2=\mathbb{H}\times\mathfrak{B}^2=\mathfrak{A}\times\mathfrak{B}\times\mathfrak{B}$ with a $(\mathbb{Z}_2)^{4}$-grading $\Gamma_2$ induced by gradings of $\mathbb{H}$ and $\mathfrak{B}$, i.e. $(\mathfrak{A}_2)_{(\bar{i_1},\bar{i_2},\bar{i_3},\bar{i_4})}=\mathbb{H}_{(\bar{i_1},\bar{i_2})}\times\mathfrak{B}_{\bar{i_3}}\times\mathfrak{B}_{\bar{i_4}}$, for all $\bar{i_1},\bar{i_2},\bar{i_3},\bar{i_4}\in\mathbb{Z}_2$. Observe that $(\mathfrak{A}_2)_{e_2}$ is central in $\mathfrak{A}_2$, where $e_2=(\bar0,\bar0,\bar0,\bar0)$, and $|\mathsf{Supp}(\Gamma_2)|=6$. Since $\mathbb{H}\cong\mathbb{H}\times\{0\}\times\{0\}\subset\mathfrak{A}_2$, it follows that $\mathfrak{A}_2$ does not satisfy the identity $[x_1, x_2,\dots,x_m]$ for all $m\in\mathbb{N}$, $m\geq2$. 
By repeating this process, we can build the algebra $\mathfrak{A}_n=\mathbb{H}\times\mathfrak{B}^n=\mathfrak{A}\times\underbrace{\mathfrak{B}\times\cdots\times\mathfrak{B}}_{n-times}$ with a $(\mathbb{Z}_2)^{n+2}$-grading $\Gamma_n$ induced by gradings of $\mathbb{H}$ and $\mathfrak{B}$ such that $(\mathfrak{A}_n)_{e_n}$ is central in $\mathfrak{A}_n$, where $e_n$ is the neutral element of $(\mathbb{Z}_2)^{n+2}$, $|\mathsf{Supp}(\Gamma_n)|=n+4$ and $\mathfrak{A}_n$ does not satisfy the identity $[x_1, x_2,\dots,x_m]$ for all $m\in\mathbb{N}$, $m\geq2$. Furthermore, the proposition follows.
\end{proof}

Although the previous proposition works for all $d\geq4$, it does not cover the cases $\mathsf{S}=\mathbb{Z}_p$'s, i.e. when $\mathsf{S}$ is a finite cyclic group. Therefore, we have the following problem.

	\vspace{2mm}
	\noindent{\bf Problem$^*$.} 
\textit{For any $p\geq4$, is there some $\mathbb{Z}_p$-graded ring $\mathfrak{R}$ with $\mathfrak{R}_{\bar0}\subseteq\mathcal{Z}(\mathfrak{R})$ such that $[x_1,\dots,x_n]\not\equiv0$ in $\mathfrak{R}$ for all $n\in\mathbb{N}$? Conversely, if $\mathfrak{R}$ is a $\mathbb{Z}_p$-graded ring such that $\mathfrak{R}_{\bar0}\subset\mathcal{Z}(\mathfrak{R})$, $p\geq4$, is it $[x_1,\dots,x_m]\equiv0$ in $\mathfrak{R}$ for some $m\in\mathbb{N}$?}
	\vspace{2mm}

In language of Lie algebras, given an associative algebra $\mathfrak{A}$, the pair $(\mathfrak{A},[\ ,\ ])$, denoted by $\mathfrak{A}^{(-)}$, has naturally a structure of Lie algebra, and so, we have the question: if $\mathsf{ad} x$ is a zero homomorphism for any $x\in\mathfrak{A}_e$ (i.e. given any $x\in\mathfrak{A}_e$, $[x,y]= 0$ for any $y\in\mathfrak{A}$), then is $\mathfrak{A}^{(-)}$ a nilpotent Lie algebra? Recall that a Lie algebra $\mathfrak{L}$ is called \textit{nilpotent} if it satisfies some $n$th commutator, i.e. $[x_1,\dots,x_{n}]\equiv0$ in $\mathfrak{L}$ for some $n\in\mathbb{N}$. In the study of Lie algebras, other important concept is that of \textit{solvable Lie algebra}, which is intrinsically related to nilpotent Lie algebras. Recall also that a Lie algebra $\mathfrak{L}$ is called \textit{solvable} if $\mathfrak{L}^{(k)}=\{0\}$ for some $k\in\mathbb{N}$, where $\mathfrak{L}^{(k)}$ is inductively defined by: $\mathfrak{L}'=[\mathfrak{L}^{(k)},\mathfrak{L}^{(k)}]$, $\mathfrak{L}^{(2)}=[\mathfrak{L}',\mathfrak{L}']$, and $\mathfrak{L}^{(k)}=[\mathfrak{L}^{(k-1)},\mathfrak{L}^{(k-1)}]$ for all $k>2$. It is well known that any nilpotent Lie algebra is a solvable Lie algebra. The converse is not true. Another result well known is that, in characteristic zero, any finite dimensional Lie algebra $\mathfrak{L}$ is solvable iff its derived subalgebra $[\mathfrak{L},\mathfrak{L}]$ is a nilpotent Lie algebra (see \cite{Serr09}, Corollary 5.3 and its Remark, p.19, or \cite{Jaco13}, Corollary 1, p.51). For further reading, as well as an overview, on Lie algebras, we suggest the works \cite{Hump12}, \cite{Jaco13} and \cite{Serr09}. 

In this sense, the Problem$^*$ can be rewritten as follows:
\begin{problem}\label{3.29}
If $\mathfrak{A}$ is a $\mathsf{G}$-graded algebra such that its neutral component $\mathfrak{A}_e$ is central, then is $\mathfrak{A}^{(-)}$ a solvable/nilpotent Lie algebra? And about the commutator ideal of $\mathfrak{A}$, is it a nilpotent algebra? 
\end{problem}


We recall that \textit{the commutator ideal} of an algebra $\mathfrak{A}$ is the (two-sided) ideal of $\mathfrak{A}$ generated by $[\ ,\ ]$, i.e. the ideal of $\mathfrak{A}$ generated by all the elements $[a,b]$, $a,b\in\mathfrak{A}$. Note that the commutator ideal of $\mathfrak{A}$ is equal to $\mathsf{span}_{\mathbb{F}}\{c[a,b]d\in\mathfrak{A}: a,b,c,d\in\mathfrak{A}\}$.

In what follows, we will answer affirmatively the Problem \ref{3.29} for any algebra over a field of characteristic zero with a grading by a cyclic group $\mathsf{G}$ of odd order. When $\mathsf{G}$ has even order, Problem \ref{3.29} also has a positive answer for finitely generated algebras. The Example \ref{3.27} shows that the condition ``$\mathsf{G}$ a finite cyclic group'' in the next theorems is indeed necessary. Inclusive, the commutator ideal of the algebra $\mathbb{H}$ in Example \ref{3.27} is not nilpotent.

\begin{remark}\label{3.25}
Let $\mathsf{G}$ be a finite cyclic group and $\mathbb{F}$ an arbitrary field. Since any $2$-cocycle $\sigma$ on $\mathsf{G}$ is symmetric, because it is a $2$-coboundary (for a proof of this fact, see \cite{MarduaThesis}, Corollary 1.2.8 , p.28), it follows that $\mathbb{F}^{\sigma}[\mathsf{H}]$ is a commutative algebra, for any subgroup $\mathsf{H}$ of $\mathsf{G}$ and $2$-cocycle $\sigma$ on $\mathsf{H}$.
\end{remark}

\begin{lemma}\label{3.14}
Let $\mathsf{G}$ be a finite cyclic group, $\mathbb{F}$ an arbitrary field, and $\mathfrak{A}=\mathfrak{B} \oplus \mathsf{J}(\mathfrak{A})$ a finite dimensional $\mathsf{G}$-graded algebra, where $\mathsf{J}(\mathfrak{A})$ is Jacobson radical of $\mathfrak{A}$ and $\mathfrak{B}=M_k(\mathbb{F}^\sigma [\mathsf{H}])$ with an elementary-canonical $\mathsf{G}$-grading. If $\mathfrak{A}_e$ is central in $\mathfrak{A}$, then $[\mathfrak{A},\mathfrak{A}]^n=\{0\}$ for some $n\in\mathbb{N}$. In addition, the commutator ideal of $\mathfrak{A}$ is nilpotent.
\end{lemma}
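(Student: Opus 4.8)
The plan is to exploit the Wedderburn--Malcev decomposition $\mathfrak{A}=\mathfrak{B}\oplus\mathsf{J}(\mathfrak{A})$ and reduce the whole statement to two facts: that the centrality hypothesis forces the semisimple block $\mathfrak{B}$ to be commutative, and that the Jacobson radical of a finite dimensional algebra is nilpotent.

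First I would note that $\mathfrak{B}_e\subseteq\mathfrak{A}_e\subseteq\mathcal{Z}(\mathfrak{A})$, so the neutral component $\mathfrak{B}_e$ of the block commutes with every element of $\mathfrak{B}$. The key step is to deduce from this that $k=1$. Indeed, if $k\geq2$, then $E_{11}\eta_e$ is a nonzero element of $\mathfrak{B}_e$ and $E_{12}\eta_e$ is a nonzero (homogeneous) element of $\mathfrak{B}$, and the multiplication rule in $M_k(\mathbb{F}^\sigma[\mathsf{H}])$ gives
\[
[E_{11}\eta_e,\,E_{12}\eta_e]=\sigma(e,e)\,E_{12}\eta_e\neq0,
\]
contradicting $\mathfrak{B}_e\subseteq\mathcal{Z}(\mathfrak{A})$; alternatively one may invoke the third item of Lemma \ref{3.04} applied to the nontrivial graded identity $[x^{(e)},y^{(e)}]$ of $\mathfrak{B}$. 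Hence $k=1$ and $\mathfrak{B}=\mathbb{F}^\sigma[\mathsf{H}]$. Since $\mathsf{G}$ is finite cyclic, Remark \ref{3.25} ensures that $\mathbb{F}^\sigma[\mathsf{H}]$ is commutative, so $[\mathfrak{B},\mathfrak{B}]=\{0\}$. This is exactly where the cyclicity of $\mathsf{G}$ is indispensable: over a noncyclic abelian group the twisted group algebra can be noncommutative (even a full matrix algebra), and the conclusion would break down.

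Next I would localize the commutators inside the radical. Writing $a=b+j$ and $a'=b'+j'$ with $b,b'\in\mathfrak{B}$ and $j,j'\in\mathsf{J}(\mathfrak{A})$, and using $[b,b']=0$ together with the fact that $\mathsf{J}(\mathfrak{A})$ is a two-sided ideal, one obtains
\[
[a,a']=[b,j']+[j,b']+[j,j']\in\mathsf{J}(\mathfrak{A}),
\]
so that $[\mathfrak{A},\mathfrak{A}]\subseteq\mathsf{J}(\mathfrak{A})$. As $\mathfrak{A}$ is finite dimensional, its Jacobson radical is nilpotent, say $\mathsf{J}(\mathfrak{A})^n=\{0\}$, whence $[\mathfrak{A},\mathfrak{A}]^n\subseteq\mathsf{J}(\mathfrak{A})^n=\{0\}$, proving the first assertion.

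Finally, since every generator $[a,a']$ of the commutator ideal already lies in the two-sided ideal $\mathsf{J}(\mathfrak{A})$, the commutator ideal is contained in $\mathsf{J}(\mathfrak{A})$ and is therefore nilpotent as well. The only genuinely delicate point is the reduction to $k=1$, which converts the centrality of $\mathfrak{A}_e$ into commutativity of the block; once that is in hand, the remaining assertions follow routinely from the nilpotency of the radical.
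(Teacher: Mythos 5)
Your proof is correct and follows essentially the same route as the paper: centrality of $\mathfrak{B}_e$ forces $\mathfrak{B}=\mathbb{F}^\sigma[\mathsf{H}]$ (the paper cites item \textit{ii)} of Lemma \ref{3.04} here, and your explicit commutator $[E_{11}\eta_e,E_{12}\eta_e]=\sigma(e,e)E_{12}\eta_e\neq0$ is precisely the computation underlying that item), Remark \ref{3.25} then gives commutativity of $\mathfrak{B}$, and $[\mathfrak{A},\mathfrak{A}]\subseteq\mathsf{J}(\mathfrak{A})$ finishes both assertions. The only slip is in your alternative citation: the relevant item of Lemma \ref{3.04} is the second, not the third --- item \textit{iii)} yields commutativity of $\mathfrak{B}_e$ and $\theta_i\notin\mathsf{H}\theta_j$, but not the reduction to $k=1$.
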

\begin{proof}
Since $\mathfrak{B}_e\subseteq\mathfrak{A}_e$, we have that $\mathfrak{B}_e$ is central in $\mathfrak{A}$, and so, $\mathfrak{B}_e$ is central in $\mathfrak{B}$. As $\mathfrak{B}$ is graded with an elementary-canonical $\mathsf{G}$-grading, by item \textit{ii)} from Lemma \ref{3.04}, it follows that $\mathfrak{B}=\mathbb{F}^\sigma [\mathsf{H}]$. Now, by Remark \ref{3.25}, we conclude that $\mathfrak{B}$ is commutative. From this, for any $a,b\in\mathfrak{B}$ and $x,y\in\mathsf{J}(\mathfrak{A})$, we must have 
$[a+x,b+y]=[a,b]+[a,y]+[x,a]+[x,y]=[a,y]+[x,a]+[x,y]\in\mathsf{J}(\mathfrak{A})$, since $[a,b]=0$ and $[a,y], [x,a], [x,y]\in\mathsf{J}(\mathfrak{A})$, because $\mathsf{J}(\mathfrak{A})$ is a (two-sided) ideal of $\mathfrak{A}$. Therefore, $[\mathfrak{A},\mathfrak{A}]\subseteq\mathsf{J}(\mathfrak{A})$. Finally, as $\mathsf{J}(\mathfrak{A})$ is nilpotent, the result follows.
\end{proof}

The proof's argument of the previous lemma can be easily extended to a finite dimensional algebra $\mathfrak{A}=\mathfrak{B}\oplus\mathsf{J}(\mathfrak{A})$, where $\mathfrak{B}=M_{k_1}(\mathbb{F}^{\sigma_1}[\mathsf{H}_1]) \times \cdots \times M_{k_p}(\mathbb{F}^{\sigma_p}[\mathsf{H}_p])$. More than that, let us extend Lemma \ref{3.14} to graded algebras that are finitely generated.

\begin{theorem}\label{3.09}
Let $\mathbb{F}$ be an algebraically closed field of characteristic zero, $\mathsf{G}$ a finite cyclic group, $\mathfrak{A}$ a  finitely generated $\mathbb{F}$-algebra with a $\mathsf{G}$-grading. If $\mathfrak{A}_e$ is central in $\mathfrak{A}$, then $\mathfrak{A}^{(-)}$ is a solvable Lie algebra. In addition, the commutator ideal of $\mathfrak{A}$ is nilpotent.
\end{theorem}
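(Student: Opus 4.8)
The plan is to reduce the finitely generated case to the finite-dimensional situation already settled in Lemma \ref{3.14} (and its extension to products of matrix algebras), by invoking the graded Specht-type theorem of I. Sviridova, and then to transfer the resulting identities back to $\mathfrak{A}$.

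First I would note that, since $\mathfrak{A}_e$ is central in $\mathfrak{A}$, it is in particular commutative, hence a $PI$-algebra (it satisfies $[x,y]\equiv0$). By Lemma \ref{teoBergCoheBaht}, $\mathfrak{A}$ is then a $PI$-algebra. As $\mathfrak{A}$ is moreover finitely generated and graded by the finite cyclic (hence abelian) group $\mathsf{G}$, Lemma \ref{teoIrina01} furnishes a finite-dimensional $\mathsf{G}$-graded $\mathbb{F}$-algebra $\mathfrak{C}$ with $\mathsf{T}^{\mathsf{G}}(\mathfrak{A})=\mathsf{T}^{\mathsf{G}}(\mathfrak{C})$, that is, $\mathfrak{A}$ and $\mathfrak{C}$ satisfy exactly the same graded identities.

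Next I would transfer the centrality hypothesis to $\mathfrak{C}$. The assumption that $\mathfrak{A}_e$ is central is equivalent to the statement that, for every $\xi\in\mathsf{G}$, the graded polynomial $[x^{(e)},y^{(\xi)}]$ is a graded identity of $\mathfrak{A}$; each such polynomial therefore lies in $\mathsf{T}^{\mathsf{G}}(\mathfrak{A})=\mathsf{T}^{\mathsf{G}}(\mathfrak{C})$, so $\mathfrak{C}_e$ is central in $\mathfrak{C}$. Writing $\mathfrak{C}\cong\mathfrak{B}\oplus\mathsf{J}(\mathfrak{C})$ as in Lemma \ref{teoIrina03}, with $\mathfrak{B}=M_{k_1}(\mathbb{F}^{\sigma_1}[\mathsf{H}_1])\times\cdots\times M_{k_p}(\mathbb{F}^{\sigma_p}[\mathsf{H}_p])$, the extension of Lemma \ref{3.14} noted right after its proof applies: here each $\mathsf{H}_l$ is a subgroup of the cyclic group $\mathsf{G}$, so Remark \ref{3.25} makes each $\mathbb{F}^{\sigma_l}[\mathsf{H}_l]$ commutative, forcing $[\mathfrak{C},\mathfrak{C}]\subseteq\mathsf{J}(\mathfrak{C})$. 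Hence $[\mathfrak{C},\mathfrak{C}]^n=\{0\}$ for some $n\in\mathbb{N}$ and the commutator ideal of $\mathfrak{C}$ is nilpotent, say of index $m$.

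Finally I would push these back to $\mathfrak{A}$. Both conclusions about $\mathfrak{C}$ are recorded by multilinear ordinary identities: $[\mathfrak{C},\mathfrak{C}]^n=\{0\}$ is the identity $[x_1,x_2][x_3,x_4]\cdots[x_{2n-1},x_{2n}]\equiv0$, and nilpotency of index $m$ of the commutator ideal is the identity $y_0[x_1,x_1']y_1[x_2,x_2']\cdots[x_m,x_m']y_m\equiv0$. A multilinear ordinary identity holds in a $\mathsf{G}$-graded algebra precisely when all of its $\mathsf{G}$-homogeneous specializations are graded identities; since these homogeneous specializations belong to $\mathsf{T}^{\mathsf{G}}(\mathfrak{C})=\mathsf{T}^{\mathsf{G}}(\mathfrak{A})$, both ordinary identities hold in $\mathfrak{A}$. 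Thus the commutator ideal of $\mathfrak{A}$ is nilpotent, and $[\mathfrak{A},\mathfrak{A}]^n=\{0\}$. Writing $\mathfrak{L}=\mathfrak{A}^{(-)}$ and using that $[u,v]=uv-vu\in[\mathfrak{A},\mathfrak{A}]^2$ whenever $u,v\in[\mathfrak{A},\mathfrak{A}]$, an easy induction yields $\mathfrak{L}^{(k)}\subseteq[\mathfrak{A},\mathfrak{A}]^{2^{k-1}}$, which vanishes as soon as $2^{k-1}\geq n$; hence $\mathfrak{A}^{(-)}$ is solvable. The step I expect to require the most care is the transfer of these \emph{ordinary} (non-graded) identities through the equality $\mathsf{T}^{\mathsf{G}}(\mathfrak{A})=\mathsf{T}^{\mathsf{G}}(\mathfrak{C})$: one must verify that the conclusions about $\mathfrak{C}$ are genuinely consequences of multilinear identities whose homogeneous components are graded identities, and that the cyclicity of $\mathsf{G}$ (used via Remark \ref{3.25}) is exactly what forces each $\mathbb{F}^{\sigma_l}[\mathsf{H}_l]$ to be commutative.
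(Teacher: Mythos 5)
Your proposal is correct and takes essentially the same route as the paper's proof: reduce to a finite-dimensional model with the same graded identities via Lemmas \ref{teoBergCoheBaht} and \ref{teoIrina01}, decompose it as in Lemma \ref{teoIrina03}, and use Lemma \ref{3.14} together with Remark \ref{3.25} to force the commutators into the Jacobson radical. You are in fact more explicit than the paper about the two transfer steps (carrying the centrality of the neutral component over to the finite-dimensional model, and carrying the resulting multilinear ordinary identities back to $\mathfrak{A}$ through the equality of graded $T$-ideals), which the paper leaves implicit.
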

\begin{proof}
First, by Lemma \ref{teoBergCoheBaht}, we have that $\mathfrak{A}$ is a $PI$-algebra. By Lemmas \ref{teoIrina01} and \ref{teoIrina03}, there exists an algebra $\widetilde{\mathfrak{A}} = \left(M_{k_1}(\mathbb{F}^{\sigma_1}[\mathsf{H}_1]) \times \cdots \times M_{k_p}(\mathbb{F}^{\sigma_p}[\mathsf{H}_p])\right) \oplus \mathsf{J}(\mathfrak{A})$, as in \eqref{3.08}, such that $\mathfrak{A}$ and $\widetilde{\mathfrak{A}}$ have the same $\mathsf{G}$-graded polynomial identities. Put $\mathfrak{B}=M_{k_1}(\mathbb{F}^{\sigma_1}[\mathsf{H}_1]) \times \cdots \times M_{k_p}(\mathbb{F}^{\sigma_p}[\mathsf{H}_p])$, where each $M_{k_i}(\mathbb{F}^{\sigma_i}[\mathsf{H}_i])$ is graded with an elementary-canonical $\mathsf{G}$-grading. By Lemma \ref{3.14} and its proof, it is easy to see that $\mathfrak{B}=\mathbb{F}^{\sigma_1}[\mathsf{H}_1] \times \cdots \times \mathbb{F}^{\sigma_p}[\mathsf{H}_p]$ is a commutative algebra and $[\widetilde{\mathfrak{A}},\widetilde{\mathfrak{A}}]\subseteq \mathsf{J}(\mathfrak{A})$. Therefore, $\mathfrak{A}^{(-)}$ is a solvable Lie algebra, and $[\ ,\ ]$ generates a nilpotent ideal of $\mathfrak{A}$.
\end{proof}

It is interesting to comment that, being $\mathfrak{A}$ an algebra as in Lemma \ref{3.14} and $\mathbb{F}$ as in Theorem \ref{3.09}, not necessarily $\mathfrak{B}\subset\mathcal{Z}(\mathfrak{A})$. In fact, by \cite{Mardua02}, Corollary 3.16 and Theorem 5.2, we have that $\mathsf{J}(\mathfrak{A})=\mathsf{J}_{00}\oplus \mathsf{J}_{11}$, where $\mathsf{J}_{00}$ is a $0$-$\mathfrak{B}$-bimodule and $\mathsf{J}_{11}$ is a faithful unitary $\mathfrak{B}$-bimodule. Now, again by Theorem 5.2 in \cite{Mardua02}, item \textit{(v)} (see also Theorem 4.6 and its proof), we have that $\mathsf{J}_{11}=\mathfrak{B}\mathsf{N}$ for some nilpotent graded algebra $\mathsf{N}=\mathsf{span}_\mathbb{F}\{d_1,\dots,d_s\}$, where $d_i \eta_\xi=\gamma_{i,\xi}\eta_{\xi} d_i$, with $\gamma_{i,\xi}\in\mathbb{F}$, and each $\gamma_{i,\xi}$ is associated with some irreducible character $\chi$ of $\mathsf{H}$. Therefore, we can not ensure that $bx=xb$ for any $b\in\mathfrak{B}$ and $x\in\mathsf{J}$.

From now on, let us weaken the condition ``\textit{$\mathbb{F}$ is an algebraically closed field}'' which is required in Theorem \ref{3.09}.

Let $\mathfrak{A}$ be an $\mathbb{F}$-algebra with a $\mathsf{G}$-grading $\Gamma$, and $\mathbb{K}\supseteq\mathbb{F}$ an extension of fields. Consider the $\mathbb{K}$-algebra $\overline{\mathfrak{A}}=\mathfrak{A}\otimes_{\mathbb{F}}\mathbb{K}$ given by the tensor product of $\mathbb{F}$-algebras $\mathfrak{A}$ and $\mathbb{K}$. We have that $\overline{\mathfrak{A}}=\mathfrak{A}\otimes_{\mathbb{F}}\mathbb{K}$ is a $\mathbb{K}$-algebra (and also an $\mathbb{F}$-algebra) with a $\mathsf{G}$-grading induced by $\Gamma$ defined by $\overline{\mathfrak{A}}_\xi=\mathfrak{A}_\xi\otimes_{\mathbb{F}}\mathbb{K}$ (as $\mathbb{K}$-spaces) for any $\xi\in\mathsf{G}$. In this sense, naturally $\mathsf{T}^{\mathsf{G}}(\mathfrak{A})\subset\mathbb{F}\langle X^{\mathsf{G}}\rangle$ and $\mathsf{T}^{\mathsf{G}}(\overline{\mathfrak{A}})\subset\mathbb{K}\langle X^{\mathsf{G}}\rangle$. Note that $\mathfrak{A}$ can be seen as a graded $\mathbb{F}$-subalgebra of $\overline{\mathfrak{A}}$, since $\overline{\mathfrak{A}}$ is also an $\mathbb{F}$-algebra, via map $a\mapsto a\otimes1$ for any $a\in\mathfrak{A}$, and so $\mathfrak{A}\cong_{\mathsf{G}} \mathfrak{A}\otimes_\mathbb{F} \mathbb{F} \subset\overline{\mathfrak{A}}$ (as $\mathbb{F}$-algebras). Consequently, given any  $f\in\mathbb{F}\langle X^{\mathsf{G}} \rangle$, if $f\equiv_{\mathsf{G}}0$ in $\overline{\mathfrak{A}}$, then $f\equiv_{\mathsf{G}}0$ in $\mathfrak{A}$. Therefore, we have that $\mathsf{T}^{\mathsf{G}}(\overline{\mathfrak{A}})\bigcap\mathbb{F}\langle X^{\mathsf{G}}\rangle$ is contained in $\mathsf{T}^{\mathsf{G}}(\mathfrak{A})$.

Given a graded polynomial $g\in\mathbb{F}\langle X^{\mathsf{G}} \rangle$, we write $g=\sum_{\xi\in\mathsf{G}}g_\xi$, where each $g_\xi$ is the homogeneous graded polynomial of $\mathbb{F}\langle X^{\mathsf{G}} \rangle$ formed by the sum of all the homogeneous graded monomials of $g$ of degree $\xi$ (see Definition \ref{3.26}). Each $g_\xi$ is called ``\textit{a homogeneous component of degree $\xi$ of $g$}'', or simply ``\textit{a $\mathsf{G}$-homogeneous component of $g$}''.

\begin{lemma}\label{3.10}
Let $\mathfrak{A}$ be a $\mathsf{G}$-graded algebra and $g\in\mathbb{F}\langle X^{\mathsf{G}}\rangle$ a graded polynomial. Suppose $g=\sum_{\xi\in \mathsf{G}} g_\xi$, where $g_\xi\in(\mathbb{F}\langle X^{\mathsf{G}}\rangle)_\xi$, $\xi\in\mathsf{G}$. Then $g\equiv_{\mathsf{G}}0$ in $\mathfrak{A}$ iff $g_\xi\equiv_{\mathsf{G}}0$ in $\mathfrak{A}$ for any $\xi\in\mathsf{G}$.
\end{lemma}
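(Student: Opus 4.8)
The plan is to reduce everything to the single fact that evaluating a $\mathsf{G}$-homogeneous graded polynomial on a graded substitution produces a homogeneous element whose degree is exactly the degree of the polynomial, and then invoke the directness of the grading decomposition $\mathfrak{A}=\bigoplus_{\xi\in\mathsf{G}}\mathfrak{A}_\xi$. The backward implication is immediate: if each $g_\xi\equiv_{\mathsf{G}}0$ in $\mathfrak{A}$, then $g=\sum_{\xi\in\mathsf{G}}g_\xi\equiv_{\mathsf{G}}0$ in $\mathfrak{A}$, since a finite sum of graded identities is a graded identity.

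For the forward implication, first I would record the homogeneity claim. A graded substitution into $g$ replaces each graded indeterminate $x_{i\zeta}$ by some $a_{i\zeta}\in\mathfrak{A}_\zeta$. If $w=x_{i_1\zeta_1}\cdots x_{i_l\zeta_l}$ is a graded monomial of $\mathsf{G}$-homogeneous degree $\zeta_1\cdots\zeta_l=\xi$, then, using the grading compatibility $\mathfrak{A}_\alpha\mathfrak{A}_\beta\subseteq\mathfrak{A}_{\alpha\beta}$ repeatedly, the evaluation $a_{i_1\zeta_1}\cdots a_{i_l\zeta_l}$ lies in $\mathfrak{A}_{\zeta_1\cdots\zeta_l}=\mathfrak{A}_\xi$. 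Since $g_\xi$ is by Definition \ref{3.26} a sum of graded monomials of degree $\xi$, the value of $g_\xi$ under any graded substitution belongs to $\mathfrak{A}_\xi$.

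Now suppose $g\equiv_{\mathsf{G}}0$ in $\mathfrak{A}$, and fix an arbitrary graded substitution of the indeterminates of $g$ by homogeneous elements of the appropriate degrees. Evaluating the equality $g=\sum_{\xi\in\mathsf{G}}g_\xi$ at this substitution gives
\begin{equation*}
0=g(\dots)=\sum_{\xi\in\mathsf{G}}g_\xi(\dots),\qquad g_\xi(\dots)\in\mathfrak{A}_\xi .
\end{equation*}
Because the decomposition $\mathfrak{A}=\bigoplus_{\xi\in\mathsf{G}}\mathfrak{A}_\xi$ is direct, the unique way to write $0$ as a sum of homogeneous components forces $g_\xi(\dots)=0$ for every $\xi\in\mathsf{G}$. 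As the substitution was arbitrary, we conclude that $g_\xi\equiv_{\mathsf{G}}0$ in $\mathfrak{A}$ for each $\xi\in\mathsf{G}$, which completes the proof.

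The only point requiring any care — and hence the main (though modest) obstacle — is the homogeneity observation in the second paragraph: one must be sure that every admissible substitution into $g$ is a graded one, so that each monomial of $g_\xi$ really evaluates into $\mathfrak{A}_\xi$; once this is in hand, the directness of the grading does all the remaining work automatically.
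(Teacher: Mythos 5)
Your proof is correct and follows essentially the same route as the paper's: evaluate each $\mathsf{G}$-homogeneous component at an arbitrary graded substitution, observe that the value of $g_\xi$ lands in $\mathfrak{A}_\xi$, and invoke the directness of $\mathfrak{A}=\bigoplus_{\xi\in\mathsf{G}}\mathfrak{A}_\xi$ to force each component to vanish. Your version merely spells out the monomial-by-monomial homogeneity check that the paper leaves to "by definition of $\mathsf{G}$-grading."
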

\begin{proof}
Clearly $g_\xi\equiv_{\mathsf{G}}0$ in $\mathfrak{A}$ for any $\xi\in\mathsf{G}$ implies $g\equiv_{\mathsf{G}}0$ in $\mathfrak{A}$. Conversely, suppose $g\equiv_{\mathsf{G}}0$ in $\mathfrak{A}$. Put $g_\xi=g_\xi(x_1^{(\xi_1)},\dots,x_n^{(\xi_n)})$, $\xi\in\mathsf{G}$. Hence, we have that $g_\xi(a_{\xi_1},\dots,a_{\xi_n})\in\mathfrak{A}_\xi$ for any $a_{\xi_i}\in\mathfrak{A}_{\xi_i}$, $i=1,\dots, n$. So, by definition of $\mathsf{G}$-grading, it follows that $g_\xi\equiv_{\mathsf{G}}0$ in $\mathfrak{A}$ for any $\xi\in\mathsf{G}$.
\end{proof}

As defined in \cite{GiamZaic05}, Definition 1.3.1, p.5, a polynomial $g=g(x_1,\dots,x_n)$ in the variables $x_1,\dots,x_n$ is said to be \textit{homogeneous in the variable $x_s$} if $x_s$ appears with the same degree (number of times) in every monomials of $g$. If $g$ is homogeneous in the variables $x_1,\dots,x_n$, then we say ``\textit{$g$ is a multihomogeneous polynomial}''. 
 It is worth noting that ``homogeneous'' here differs (subtly) from ``homogeneous'' in Definition \ref{3.26}. 
So, we say that a graded polynomial $f=f(x_1^{(\xi_1)},\dots,x_n^{(\xi_n)})$ of $\mathbb{F}\langle X^{\mathsf{G}} \rangle$ is ``\textit{multihomogeneous and $\mathsf{G}$-homogeneous of degree $\xi$}'' (or still ``\textit{a multihomogeneous $\mathsf{G}$-homogeneous polynomial}'') if $f$ is multihomogeneous in the variables $x_1^{(\xi_1)},\dots,x_n^{(\xi_n)}$, and $f$ is $\mathsf{G}$-homogeneous of degree $\xi\in\mathsf{G}$. 
Obviously, if $w=\sum_{\xi\in\mathsf{G}}w_\xi$ is a graded polynomial of $\mathbb{F}\langle X^{\mathsf{G}} \rangle$, where $w_\xi$'s are the $\mathsf{G}$-homogeneous components of $w$, then each $w_\xi$ can be written as a sum of multihomogeneous $\mathsf{G}$-homogeneous graded polynomials (of degree $\xi$) (see the beginning of page 6, in \cite{GiamZaic05}).

\begin{lemma}\label{3.31}
Let $\mathbb{F}$ be an infinite field, $\mathsf{G}$ a group, $\mathfrak{A}$ a $\mathsf{G}$-graded $\mathbb{F}$-algebra. If $g\in \mathbb{F}\langle X^{\mathsf{G}}\rangle$ is a graded polynomial identity for $\mathfrak{A}$, then every multihomogeneous $\mathsf{G}$-homogeneous component of $g$ is still a graded identity for $\mathfrak{A}$.
\end{lemma}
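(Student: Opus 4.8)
The plan is to combine Lemma \ref{3.10} with the classical Vandermonde homogenization argument, the latter being the only place where the hypothesis that $\mathbb{F}$ is infinite actually enters.

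First I would reduce to the $\mathsf{G}$-homogeneous case. Writing $g=\sum_{\xi\in\mathsf{G}}g_\xi$ with $g_\xi\in(\mathbb{F}\langle X^{\mathsf{G}}\rangle)_\xi$, Lemma \ref{3.10} guarantees that each $g_\xi$ is itself a graded identity for $\mathfrak{A}$. Since the pair (\,$\mathsf{G}$-degree, multidegree\,) jointly partitions the graded monomials of $g$, the multihomogeneous $\mathsf{G}$-homogeneous components of $g$ are exactly the multihomogeneous components of the individual $g_\xi$'s; hence it suffices to prove the statement for a single $\mathsf{G}$-homogeneous graded identity, say $g=g_\xi$ of degree $\xi$, depending on the variables $x_1^{(\xi_1)},\dots,x_n^{(\xi_n)}$. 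It is worth flagging already here that this reduction is what lets the multidegree refinement avoid interacting with the $\mathsf{G}$-grading in the (possibly) non-abelian case, where a multihomogeneous polynomial need not be $\mathsf{G}$-homogeneous.

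Next, fix one variable, say $x_1^{(\xi_1)}$, and write $g=\sum_{j=0}^{d}g^{[j]}$, where $g^{[j]}$ collects those monomials of $g$ in which $x_1^{(\xi_1)}$ occurs exactly $j$ times and $d$ is the highest such occurrence. Each $g^{[j]}$ is a subsum of $g$, so it remains $\mathsf{G}$-homogeneous of degree $\xi$. Now for arbitrary homogeneous elements $b\in\mathfrak{A}_{\xi_1}$ and $a_{\xi_2}\in\mathfrak{A}_{\xi_2},\dots,a_{\xi_n}\in\mathfrak{A}_{\xi_n}$, and any $\lambda\in\mathbb{F}$, the element $\lambda b$ again lies in $\mathfrak{A}_{\xi_1}$ because $\mathfrak{A}_{\xi_1}$ is an $\mathbb{F}$-subspace; substituting $\lambda b$ for $x_1^{(\xi_1)}$ and $a_{\xi_i}$ for $x_i^{(\xi_i)}$ yields
\begin{equation*}
0=g(\lambda b,a_{\xi_2},\dots,a_{\xi_n})=\sum_{j=0}^{d}\lambda^{j}\,g^{[j]}(b,a_{\xi_2},\dots,a_{\xi_n}),
\end{equation*}
valid for every $\lambda\in\mathbb{F}$. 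Because $\mathbb{F}$ is infinite, I may choose $d+1$ pairwise distinct scalars $\lambda_0,\dots,\lambda_d$; the corresponding coefficient matrix $(\lambda_k^{j})_{0\le k,j\le d}$ is Vandermonde, hence invertible, so the linear system forces $g^{[j]}(b,a_{\xi_2},\dots,a_{\xi_n})=0$ for every $j$. As the homogeneous arguments were arbitrary, each $g^{[j]}$ is a graded identity for $\mathfrak{A}$.

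Finally I would iterate this homogenization over the remaining variables, applying the same argument to each $g^{[j]}$ with respect to $x_2^{(\xi_2)}$, then to the pieces obtained with respect to $x_3^{(\xi_3)}$, and so on through $x_n^{(\xi_n)}$. Since each step preserves both the $\mathsf{G}$-homogeneity (a subsum of a $\mathsf{G}$-homogeneous polynomial stays $\mathsf{G}$-homogeneous) and the homogeneity already achieved in the earlier variables, after $n$ steps $g$ is split precisely into its multihomogeneous $\mathsf{G}$-homogeneous components, each of which is a graded identity for $\mathfrak{A}$. The only genuine ingredient is the Vandermonde invertibility, which is exactly where the infiniteness of $\mathbb{F}$ is indispensable; everything else is routine bookkeeping, and the single point demanding care is to invoke Lemma \ref{3.10} at the outset.
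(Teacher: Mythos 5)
Your proposal is correct and follows essentially the same route as the paper: the paper also first reduces to a single $\mathsf{G}$-homogeneous component via Lemma \ref{3.10} and then invokes the classical Vandermonde homogenization argument (citing Theorem 1.3.2 of Giambruno--Zaicev), which is exactly the argument you write out explicitly. Your version merely spells out the substitution $x_1^{(\xi_1)}\mapsto\lambda b$ and the iteration over the variables that the paper leaves to the reference.
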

\begin{proof}
By Lemma \ref{3.10}, we can assume that $g=g_\xi\in (\mathbb{F}\langle X^{\mathsf{G}}\rangle)_\xi$ is a homogeneous graded polynomial of degree $\xi\in\mathsf{G}$. Hence, the proof is adapted from the proof of Theorem 1.3.2, p.6, in \cite{GiamZaic05}.
\end{proof}

Notice that the converse of the previous lemma is still true. Let us show now the proposition below as a consequence of the lemmas above.
\begin{proposition}\label{3.11}
Let $\mathbb{F}$ be an infinite field, $\mathsf{G}$ a group, and $\mathfrak{A}$ a $\mathsf{G}$-graded $\mathbb{F}$-algebra. Let $\mathfrak{C}$ be a commutative $\mathbb{F}$-algebra and $\overline{\mathfrak{A}}\coloneqq\mathfrak{A}\otimes_{\mathbb{F}}\mathfrak{C}$ the tensor $\mathbb{F}$-algebra of $\mathfrak{A}$ and $\mathfrak{C}$ with $\mathsf{G}$-grading defined by $\overline{\mathfrak{A}}_\xi=\mathfrak{A}_\xi\otimes_{\mathbb{F}}\mathfrak{C}$ for any $\xi\in\mathsf{G}$. Every graded polynomial identity of $\mathfrak{A}$ is still a graded identity for $\mathfrak{A}\otimes_{\mathbb{F}}\mathfrak{C}$. In particular, if $\mathbb{K}\supseteq \mathbb{F}$ is an extension of fields, then $\mathfrak{A}$ and $\mathfrak{A}\otimes_{\mathbb{F}}\mathbb{K}$ satisfy the same graded polynomial identities in $\mathbb{F}\langle X^{\mathsf{G}}\rangle$.
\end{proposition}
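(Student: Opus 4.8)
The plan is to establish the main assertion, namely that every $g\in\mathsf{T}^{\mathsf{G}}(\mathfrak{A})$ is also a graded identity of $\overline{\mathfrak{A}}=\mathfrak{A}\otimes_{\mathbb{F}}\mathfrak{C}$; the concluding ``in particular'' then follows at once, since a field extension $\mathbb{K}\supseteq\mathbb{F}$ is in particular a commutative $\mathbb{F}$-algebra, and combining the resulting inclusion $\mathsf{T}^{\mathsf{G}}(\mathfrak{A})\subseteq\mathsf{T}^{\mathsf{G}}(\overline{\mathfrak{A}})\cap\mathbb{F}\langle X^{\mathsf{G}}\rangle$ with the reverse inclusion already recorded just before the statement yields the desired equality. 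First I would reduce to the case in which $\mathfrak{C}$ is a polynomial algebra. Indeed, fix $g=g(x^{(\xi_1)}_1,\dots,x^{(\xi_n)}_n)$ and homogeneous elements $u_i\in\overline{\mathfrak{A}}_{\xi_i}=\mathfrak{A}_{\xi_i}\otimes_{\mathbb{F}}\mathfrak{C}$, say $u_i=\sum_j a_{ij}\otimes c_{ij}$ with $a_{ij}\in\mathfrak{A}_{\xi_i}$. The finitely many $c_{ij}$ generate a finitely generated commutative subalgebra $\mathfrak{C}_0\subseteq\mathfrak{C}$, and since $\mathfrak{A}$ is a free $\mathbb{F}$-module the map $\mathfrak{A}\otimes_{\mathbb{F}}\mathfrak{C}_0\to\mathfrak{A}\otimes_{\mathbb{F}}\mathfrak{C}$ is injective, so it suffices to annihilate $g(u_1,\dots,u_n)$ inside $\mathfrak{A}\otimes_{\mathbb{F}}\mathfrak{C}_0$. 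Writing $\mathfrak{C}_0$ (after adjoining a unit if necessary) as a homomorphic image of a polynomial ring $R=\mathbb{F}[t_1,\dots,t_r]$ under a surjection $\pi$, the map $\mathrm{id}_{\mathfrak{A}}\otimes\pi\colon\mathfrak{A}\otimes_{\mathbb{F}}R\twoheadrightarrow\mathfrak{A}\otimes_{\mathbb{F}}\mathfrak{C}_0$ is a surjective \emph{graded} $\mathbb{F}$-algebra homomorphism, surjective in each homogeneous degree; as graded identities descend to graded homomorphic images, it is then enough to prove the claim for $\mathfrak{C}=R$.

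For $\mathfrak{C}=R=\mathbb{F}[t_1,\dots,t_r]$ we have $\overline{\mathfrak{A}}=\mathfrak{A}[t_1,\dots,t_r]$ with $\overline{\mathfrak{A}}_{\xi}=\mathfrak{A}_{\xi}[t_1,\dots,t_r]$. The key observation is that each evaluation $\mathrm{ev}_{\lambda}=\mathrm{id}_{\mathfrak{A}}\otimes(t_k\mapsto\lambda_k)\colon\overline{\mathfrak{A}}\to\mathfrak{A}$, for $\lambda=(\lambda_1,\dots,\lambda_r)\in\mathbb{F}^r$, is an $\mathbb{F}$-algebra homomorphism sending $\overline{\mathfrak{A}}_{\xi}$ into $\mathfrak{A}_{\xi}$. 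Thus, for homogeneous $u_i\in\overline{\mathfrak{A}}_{\xi_i}$,
\[
\mathrm{ev}_{\lambda}\bigl(g(u_1,\dots,u_n)\bigr)=g\bigl(\mathrm{ev}_{\lambda}(u_1),\dots,\mathrm{ev}_{\lambda}(u_n)\bigr)=0,
\]
because $\mathrm{ev}_{\lambda}(u_i)\in\mathfrak{A}_{\xi_i}$ and $g\equiv_{\mathsf{G}}0$ in $\mathfrak{A}$. Hence $g(u_1,\dots,u_n)\in\mathfrak{A}[t_1,\dots,t_r]$ vanishes under every scalar evaluation $\mathrm{ev}_{\lambda}$, $\lambda\in\mathbb{F}^r$. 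Expanding it in a fixed $\mathbb{F}$-basis of $\mathfrak{A}$, each coordinate is an ordinary polynomial in $\mathbb{F}[t_1,\dots,t_r]$ that vanishes at every point of $\mathbb{F}^r$; since $\mathbb{F}$ is infinite, every such coordinate is the zero polynomial, whence $g(u_1,\dots,u_n)=0$. This proves $g\equiv_{\mathsf{G}}0$ in $\overline{\mathfrak{A}}$ for $\mathfrak{C}=R$, and, by the reduction above, for arbitrary commutative $\mathfrak{C}$.

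I expect the genuinely delicate points to be bookkeeping rather than conceptual: verifying that $\mathrm{ev}_{\lambda}$ is grading-preserving (so that each $\mathrm{ev}_\lambda(u_i)$ is a legitimate \emph{homogeneous} argument for $g$ in $\mathfrak{A}$), and that the passage to the finitely generated subalgebra $\mathfrak{C}_0$ is harmless, which rests on the exactness of $\mathfrak{A}\otimes_{\mathbb{F}}(-)$ for the vector space $\mathfrak{A}$. It is worth noting that this argument uses neither multihomogeneity nor Lemma \ref{3.31}; alternatively one could first invoke Lemmas \ref{3.10} and \ref{3.31} to reduce $g$ to a single multihomogeneous $\mathsf{G}$-homogeneous component and then argue with generic scalars $u_i=\sum_s a_{is}t_{is}$, but the evaluation argument above renders the reduction to multihomogeneous identities unnecessary.
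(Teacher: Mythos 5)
Your proof is correct, but it follows a genuinely different route from the paper's. The paper reduces $g$ to a multihomogeneous $\mathsf{G}$-homogeneous component via Lemmas \ref{3.10} and \ref{3.31} and then adapts the linearization argument of Lemma 1.4.2 in \cite{GiamZaic05}, where one expands $g(\sum_j a_{1j}\otimes c_{1j},\dots)$ and uses commutativity of $\mathfrak{C}$ to group the coefficients of each monomial in the $c_{ij}$'s into evaluations of partial linearizations of $g$, which are again identities because $\mathbb{F}$ is infinite. You instead bypass multihomogeneity entirely: you reduce to a finitely generated $\mathfrak{C}_0$ (harmless by flatness of the vector space $\mathfrak{A}$), lift through a graded surjection from $\mathfrak{A}\otimes_{\mathbb{F}}\mathbb{F}[t_1,\dots,t_r]$, and then kill $g(u_1,\dots,u_n)$ by observing that every scalar evaluation $\mathrm{ev}_\lambda$ is a grading-preserving homomorphism onto $\mathfrak{A}$, so each basis coordinate of $g(u_1,\dots,u_n)$ is a polynomial vanishing on all of $\mathbb{F}^r$, hence zero since $\mathbb{F}$ is infinite. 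Both arguments consume the hypothesis that $\mathbb{F}$ is infinite, just in different places (identities closed under partial linearization versus Zariski density of $\mathbb{F}^r$), and both treat the ``in particular'' clause identically, by combining the forward inclusion with the embedding $a\mapsto a\otimes 1$ recorded before the statement. What the paper's route buys is brevity through reuse of Lemma \ref{3.31}, which it needs anyway; what your route buys is a self-contained argument that avoids the linearization bookkeeping and makes the role of commutativity of $\mathfrak{C}$ (namely, that $\mathbb{F}[t_1,\dots,t_r]$ surjects onto any finitely generated piece and admits enough $\mathbb{F}$-points) completely transparent. The only points worth stating carefully in a final write-up are the ones you already flag: componentwise surjectivity of $\mathrm{id}_{\mathfrak{A}}\otimes\pi$ so that graded identities descend, and the non-unital adjustment when $\mathfrak{C}_0$ lacks a unit.
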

\begin{proof}
Let $g\in \mathbb{F}\langle X^{\mathsf{G}}\rangle$ be a graded polynomial identity for $\mathfrak{A}$. By Lemmas  \ref{3.10} and \ref{3.31}, we can assume that $g$ is a multihomogeneous $\mathsf{G}$-homogeneous graded polynomial. Hence, the proof of the first part of the proposition is similar to the proof of Lemma 1.4.2, p.10, in \cite{GiamZaic05}.

On the other hand, since $\mathfrak{A}\cong_{\mathsf{G}}\mathfrak{A}\otimes_{\mathbb{F}}\mathbb{F}$, we can see $\mathfrak{A}$ as a $\mathsf{G}$-graded $\mathbb{F}$-subalgebra of $\mathfrak{A}\otimes_{\mathbb{F}}\mathbb{K}$, and hence, any graded polynomial identity $g\in\mathbb{F}\langle X^{\mathsf{G}}\rangle$ of $\mathfrak{A}\otimes_{\mathbb{F}}\mathbb{K}$ belongs to $\mathsf{T^G}(\mathfrak{A})$.
\end{proof}

Finally, let us conclude this work with two results that generalize Theorem \ref{3.09} for algebras on fields which are not necessarily algebraically closed.

\begin{theorem}\label{3.12}
Let $\mathbb{F}$ be a field of characteristic zero, $\mathsf{G}$ a finite cyclic group, $\mathfrak{A}$ a finitely generated $\mathbb{F}$-algebra with a $\mathsf{G}$-grading $\Gamma$. Suppose that $\mathfrak{A}_e$ is central in $\mathfrak{A}$. The commutator ideal of $\mathfrak{A}$ is nilpotent. Moreover, $\mathfrak{A}^{(-)}$ is a solvable Lie algebra. In addition, if the support of $\Gamma$ has at most 3 elements, then $\mathfrak{A}^{(-)}$ is a nilpotent Lie algebra.
\end{theorem}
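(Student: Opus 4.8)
The plan is to reduce to the algebraically closed case already settled in Theorem~\ref{3.09} by extending scalars to $\overline{\mathbb{F}}$, and then transfer the resulting structural nilpotency back down to $\mathfrak{A}$ through the inclusion $\mathfrak{A}\subseteq\mathfrak{A}\otimes_{\mathbb{F}}\overline{\mathbb{F}}$. Let $\mathbb{K}=\overline{\mathbb{F}}$ be the algebraic closure of $\mathbb{F}$ and set $\overline{\mathfrak{A}}=\mathfrak{A}\otimes_{\mathbb{F}}\mathbb{K}$, endowed with the induced $\mathsf{G}$-grading $\overline{\mathfrak{A}}_\xi=\mathfrak{A}_\xi\otimes_{\mathbb{F}}\mathbb{K}$. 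Since $\mathfrak{A}$ is finitely generated over $\mathbb{F}$, the algebra $\overline{\mathfrak{A}}$ is finitely generated over $\mathbb{K}$ (by the images of the same generators), and $\mathbb{K}$ is an algebraically closed field of characteristic zero, so the hypotheses of Theorem~\ref{3.09} are in place once centrality is verified.

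First I would check that $\overline{\mathfrak{A}}_e$ is central in $\overline{\mathfrak{A}}$. The hypothesis that $\mathfrak{A}_e$ is central is equivalent to the system of graded identities $[x^{(e)},y^{(\xi)}]\equiv_{\mathsf{G}}0$ ($\xi\in\mathsf{G}$) holding in $\mathfrak{A}$. As $\mathbb{F}$ has characteristic zero it is infinite, so Proposition~\ref{3.11} applies to the field extension $\mathbb{K}\supseteq\mathbb{F}$ and guarantees that $\mathfrak{A}$ and $\overline{\mathfrak{A}}$ satisfy exactly the same graded identities from $\mathbb{F}\langle X^{\mathsf{G}}\rangle$; in particular each $[x^{(e)},y^{(\xi)}]$ remains an identity of $\overline{\mathfrak{A}}$, which is precisely the statement that $\overline{\mathfrak{A}}_e$ is central in $\overline{\mathfrak{A}}$. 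Now Theorem~\ref{3.09} applies to $\overline{\mathfrak{A}}$ and yields that its commutator ideal, which I denote $C(\overline{\mathfrak{A}})$, is nilpotent, say $C(\overline{\mathfrak{A}})^n=\{0\}$ for some $n\in\mathbb{N}$.

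Next I would transfer this down to $\mathfrak{A}$. Via $a\mapsto a\otimes 1$ we regard $\mathfrak{A}$ as an $\mathbb{F}$-subalgebra of $\overline{\mathfrak{A}}$; consequently every generator $c[a,b]d$ (with $a,b,c,d\in\mathfrak{A}$) of the commutator ideal $C(\mathfrak{A})$ lies in $C(\overline{\mathfrak{A}})$, so $C(\mathfrak{A})\subseteq C(\overline{\mathfrak{A}})$ as $\mathbb{F}$-subspaces of $\overline{\mathfrak{A}}$. Hence $C(\mathfrak{A})^n\subseteq C(\overline{\mathfrak{A}})^n=\{0\}$, proving that the commutator ideal of $\mathfrak{A}$ is nilpotent. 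Solvability of $\mathfrak{A}^{(-)}$ is then immediate: since $[\mathfrak{A},\mathfrak{A}]\subseteq C(\mathfrak{A})$, a routine induction gives $(\mathfrak{A}^{(-)})^{(j)}\subseteq C(\mathfrak{A})^{2^{j-1}}$ for all $j\geq 1$, so $(\mathfrak{A}^{(-)})^{(j)}=\{0\}$ as soon as $2^{j-1}\geq n$.

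Finally, the case $|\mathsf{Supp}(\Gamma)|\leq 3$ does not require the base change at all. A finite cyclic group is a cancellative monoid, and by hypothesis $\mathfrak{A}_e$ is central, so Theorem~\ref{3.06} applies directly with $d=|\mathsf{Supp}(\Gamma)|\in\{1,2,3\}$ (the case $\mathfrak{A}=\{0\}$ being trivial) and yields $[x_1,\dots,x_{d+1}]\equiv 0$ in $\mathfrak{A}$. This $(d+1)$-fold commutator identity is exactly the assertion that $\mathfrak{A}^{(-)}$ is a nilpotent Lie algebra. The only genuinely delicate point in the whole argument is ensuring the hypotheses of Theorem~\ref{3.09} survive the extension of scalars, in particular the preservation of centrality of the neutral component, which is exactly what Proposition~\ref{3.11} secures; the remaining passages are formal.
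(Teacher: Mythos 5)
Your proposal is correct and follows essentially the same route as the paper's own proof: extend scalars to an algebraically closed field, use Proposition \ref{3.11} to transfer the graded identities $[x^{(e)},y^{(\xi)}]$ and hence centrality of the neutral component, apply Theorem \ref{3.09}, pull the conclusion back through the embedding $\mathfrak{A}\hookrightarrow\mathfrak{A}\otimes_{\mathbb{F}}\mathbb{K}$, and handle the case $|\mathsf{Supp}(\Gamma)|\leq3$ via Theorem \ref{3.06}. Your write-up is in fact slightly more explicit than the paper's on the descent step (the inclusion of commutator ideals and the induction giving solvability from nilpotency of the commutator ideal).
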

\begin{proof}
First, the last part of the theorem follows from Theorem \ref{3.06}. Now, let $\mathbb{K}\supseteq\mathbb{F}$ be an extension of fields with $\mathbb{K}$ algebraically closed. Consider the tensor algebra $\overline{\mathfrak{A}}=\mathfrak{A}\otimes_{\mathbb{F}}\mathbb{K}$ with the $\mathsf{G}$-grading defined by $\overline{\mathfrak{A}}_\xi=\mathfrak{A}_\xi\otimes_{\mathbb{F}}\mathbb{K}$, $\xi\in\mathsf{G}$. 
Assume that $[x^{(e)},y^{(\xi)}]\equiv_{\mathsf{G}}0$ in $\mathfrak{A}$ for any $\xi\in\mathsf{G}$. 
By Proposition \ref{3.11}, it follows that $[x^{(e)},y^{(\xi)}]\equiv_{\mathsf{G}}0$ in $\overline{\mathfrak{A}}$ for any $\xi\in\mathsf{G}$, and so $\overline{\mathfrak{A}}_e$ is central in $\overline{\mathfrak{A}}$. 

On the other side, since $\mathfrak{A}$ is a finitely generated $\mathbb{F}$-algebra, we have that $\overline{\mathfrak{A}}$ is a finitely generated $\mathbb{K}$-algebra, because if $S$ generates $\mathfrak{A}$ as an $\mathbb{F}$-algebra, then $\{a\otimes 1_{\mathbb{K}} : a\in S\}$ generates $\mathfrak{A}\otimes_{\mathbb{F}}\mathbb{K}$ as a $\mathbb{K}$-algebra. 
Therefore, the result follows from Theorem \ref{3.09} and its proof, and because $\mathfrak{A}$ is an $\mathbb{F}$-subalgebra of $\overline{\mathfrak{A}}$.
\end{proof}

Now, using the idea of the proof of Proposition 5.4 in \cite{Mardua02}, we can improve Theorem \ref{3.12} by eliminating the requirement for $\mathfrak{A}$ to be a finitely generated algebra, but with $\mathsf{gcd}(|\mathsf{G}|,2)=1$.

\begin{theorem}\label{3.30}
Let $\mathbb{F}$ be a field of characteristic zero, $\mathsf{G}$ a cyclic group of odd order, $\mathfrak{A}$ an $\mathbb{F}$-algebra with a $\mathsf{G}$-grading. If $\mathfrak{A}_e$ is central in $\mathfrak{A}$, then the commutator ideal of $\mathfrak{A}$ is nilpotent. Consequently, $\mathfrak{A}^{(-)}$ is a solvable Lie algebra. In addition, $[x_1,x_2][x_3,x_4]\cdots[x_{2n-1},x_{2n}]\equiv0$ in $\mathfrak{A}$, for some $n\in\mathbb{N}$.
\end{theorem}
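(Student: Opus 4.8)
The plan is to replace $\mathfrak{A}$ by a single finite-dimensional algebra carrying all of its graded identities, and then to read a \emph{uniform} nilpotency bound off the structure of that algebra; this is the device used in the proof of Proposition~5.4 of \cite{Mardua02}, now fed with the much sharper structural information that centrality of $\mathfrak{A}_e$ provides. First I would dispose of the ground field. Since $\mathfrak{A}_e$ is central it is commutative, hence a $PI$-algebra, so Lemma~\ref{teoBergCoheBaht} makes $\mathfrak{A}$ a $PI$-algebra. Let $\mathbb{K}\supseteq\mathbb{F}$ be an algebraic closure and put $\overline{\mathfrak{A}}=\mathfrak{A}\otimes_\mathbb{F}\mathbb{K}$. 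By Proposition~\ref{3.11} the algebras $\mathfrak{A}$ and $\overline{\mathfrak{A}}$ satisfy the same graded identities in $\mathbb{F}\langle X^\mathsf{G}\rangle$; in particular $[x^{(e)},y^{(\xi)}]\equiv_\mathsf{G}0$ in $\overline{\mathfrak{A}}$ for every $\xi$, so $\overline{\mathfrak{A}}_e$ is central. As every ordinary identity I obtain for $\overline{\mathfrak{A}}$ descends to the $\mathbb{F}$-subalgebra $\mathfrak{A}$, I may assume $\mathbb{F}$ algebraically closed.

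Next, by Lemma~\ref{teoIrina02} there is a finite-dimensional $(\mathsf{G}\times\mathbb{Z}_2)$-graded algebra $\mathfrak{C}$ with $\mathsf{T}^\mathsf{G}(\mathfrak{A})=\mathsf{T}^\mathsf{G}(\mathsf{E}^\mathsf{G}(\mathfrak{C}))$, and consequently $\mathsf{E}^\mathsf{G}(\mathfrak{C})_e$ is central. Writing $\mathfrak{C}=\mathfrak{C}^{(0)}\oplus\mathfrak{C}^{(1)}$ for the $\mathbb{Z}_2$-decomposition and expanding a bracket $[a\otimes u,\,b\otimes v]$ inside $\mathsf{E}^\mathsf{G}(\mathfrak{C})=(\mathfrak{C}^{(0)}\otimes\mathsf{E}_0)\oplus(\mathfrak{C}^{(1)}\otimes\mathsf{E}_1)$, and using that $\mathsf{E}_0$ is central in $\mathsf{E}$ while odd Grassmann elements anticommute, the centrality of the neutral component translates into two statements on $\mathfrak{C}$: the component $\mathfrak{C}_{(e,\bar0)}$ lies in the center of $\mathfrak{C}$, and $\mathfrak{C}_{(e,\bar1)}$ lies in its \emph{super-center}, i.e. it commutes with $\mathfrak{C}^{(0)}$ and anticommutes with $\mathfrak{C}^{(1)}$.

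The odd order of $\mathsf{G}$ enters decisively now. Because $|\mathsf{G}|$ is odd, $\mathsf{G}\times\mathbb{Z}_2$ is cyclic, so by Remark~\ref{3.25} every twisted group algebra occurring in the graded Wedderburn--Malcev decomposition $\mathfrak{C}=\mathfrak{B}\oplus\mathsf{J}(\mathfrak{C})$ of Lemma~\ref{teoIrina03}, with $\mathfrak{B}=\prod_i M_{k_i}(\mathbb{F}^{\sigma_i}[\mathsf{H}_i])$, is commutative; and as $\mathfrak{B}_{(e,\bar0)}$ is central in each block, Lemma~\ref{3.04}(ii) forces $k_i=1$, so $\mathfrak{B}$ is commutative and $[\mathfrak{C},\mathfrak{C}]\subseteq\mathsf{J}(\mathfrak{C})$. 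Moreover, if some $\mathsf{H}_i$ contained an odd element $(\xi,\bar1)$, then, $\xi$ having odd order $r$, the power $(\xi,\bar1)^r=(e,\bar1)$ would lie in $\mathsf{H}_i$, producing a nonzero super-central odd element $\eta_{(e,\bar1)}\in\mathfrak{C}_{(e,\bar1)}$ with $2\eta_{(e,\bar1)}^2=0$ yet $\eta_{(e,\bar1)}^2\neq0$, impossible in characteristic zero. Hence every $\mathsf{H}_i\subseteq\mathsf{G}\times\{\bar0\}$, so $\mathfrak{B}\subseteq\mathfrak{C}^{(0)}$ and the entire odd part $\mathfrak{C}^{(1)}\subseteq\mathsf{J}(\mathfrak{C})$.

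I would finish with a super-commutator count. Every super-commutator of $\mathbb{Z}_2$-homogeneous elements of $\mathfrak{C}$ lies in $\mathsf{J}(\mathfrak{C})$: if the two arguments are not both odd it is the ordinary commutator, lying in $[\mathfrak{C},\mathfrak{C}]\subseteq\mathsf{J}(\mathfrak{C})$, and if both are odd it is $ab+ba$ with $a,b\in\mathfrak{C}^{(1)}\subseteq\mathsf{J}(\mathfrak{C})$, hence in $\mathsf{J}(\mathfrak{C})^2$. Since $\mathsf{J}(\mathfrak{C})$ is a nilpotent two-sided ideal, say $\mathsf{J}(\mathfrak{C})^N=\{0\}$, any product of $N$ super-commutators with arbitrary factors interspersed vanishes. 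Evaluating the multilinear polynomial $[x_1,x_2]\,z_1[x_3,x_4]\cdots z_{N-1}[x_{2N-1},x_{2N}]$ on $\mathbb{Z}_2$-homogeneous generators $c\otimes w$ of $\mathsf{E}^\mathsf{G}(\mathfrak{C})$ turns each bracket into a super-commutator tensored with a Grassmann factor, so this is an identity of $\mathsf{E}^\mathsf{G}(\mathfrak{C})$; thus its commutator ideal is nilpotent. As $\mathfrak{A}$ and $\mathsf{E}^\mathsf{G}(\mathfrak{C})$ share the same graded, hence the same ordinary, identities, the commutator ideal of $\mathfrak{A}$ is nilpotent; taking the $z_i$ trivial gives $[x_1,x_2]\cdots[x_{2N-1},x_{2N}]\equiv0$, and nilpotency of the commutator ideal yields at once that $\mathfrak{A}^{(-)}$ is solvable. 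I expect the main obstacle to be the bookkeeping of the last two paragraphs: matching commutators in the Grassmann envelope with super-commutators in $\mathfrak{C}$, and isolating that the odd order is precisely what collapses the odd part of the semisimple skeleton $\mathfrak{B}$ into the radical --- the step that fails for even order (cf.\ Example~\ref{3.27} and Problem$^*$) and that the hypothesis therefore cannot relinquish.
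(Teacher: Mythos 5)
Your proposal is correct and follows essentially the same route as the paper's proof: reduction to an algebraically closed field via Proposition \ref{3.11}, passage to the Grassmann envelope of a finite-dimensional $(\mathsf{G}\times\mathbb{Z}_2)$-graded algebra via Lemmas \ref{teoIrina02} and \ref{teoIrina03}, commutativity of the twisted group algebras from Remark \ref{3.25} and $k_i=1$ from Lemma \ref{3.04}(ii), the observation that $(\xi,\bar1)^{\mathsf{o}(\xi)}=(e,\bar1)$ for $|\mathsf{G}|$ odd to kill the odd part of the semisimple skeleton, and the conclusion that all commutators land in the nilpotent ideal $\mathsf{E}^\mathsf{G}(\mathsf{J})$. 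The only difference is presentational: you phrase the last two steps in the superalgebra $\mathfrak{C}$ (super-center, super-commutators in $\mathsf{J}(\mathfrak{C})$) while the paper performs the equivalent computations directly inside the Grassmann envelope.
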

\begin{proof}
By the proof of Theorem \ref{3.12}, we can assume, without loss of generality, that $\mathbb{F}$ is an algebraically closed field. By Lemma \ref{teoBergCoheBaht}, $\mathfrak{A}$ is a $PI$-algebra, and so by Lemmas \ref{teoIrina02} and \ref{teoIrina03}, it follows that there exists a $\mathsf{G}\times\mathbb{Z}_2$-graded finite dimensional algebra $\widehat{\mathfrak{A}} = \mathfrak{B}\oplus\mathsf{J}$ such that $\mathsf{T^G}(\mathfrak{A})=\mathsf{T^G}(\mathsf{E}^\mathsf{G}(\widehat{\mathfrak{A}}))$, where $\mathsf{J}=\mathsf{J}(\widehat{\mathfrak{A}})$ is the Jacobson radical of $\widehat{\mathfrak{A}}$, and $\mathfrak{B}=M_{k_1}(\mathbb{F}^{\sigma_1}[\mathsf{H}_1]) \times \cdots \times M_{k_q}(\mathbb{F}^{\sigma_q}[\mathsf{H}_q])$, with $\mathsf{H}_i\leq \mathsf{G}\times\mathbb{Z}_2$, $\sigma_i\in\mathsf{Z}^2(\mathsf{H}_i, \mathbb{F}^*)$, $M_{k_i}(\mathbb{F}^{\sigma_i}[\mathsf{H}_i])$ is graded with an elementary-canonical $\mathsf{G}\times\mathbb{Z}_2$-grading. On the other hand, as $\mathsf{G}\cong\mathbb{Z}_p$, with $p$ odd, we have that $\mathsf{G}\times\mathbb{Z}_2$ is isomorphic to $\mathbb{Z}_p\times\mathbb{Z}_2\cong\mathbb{Z}_{2p}$. Hence, for any subgroup $\mathsf{H}$ of $\mathsf{G}\times\mathbb{Z}_2$, it follows that any $2$-cocycle $\sigma\in\mathsf{Z}^2(\mathsf{H},\mathbb{F}^*)$ is symmetric (see Remark \ref{3.25}). Consequently, the algebras $\mathbb{F}^{\sigma_s}[\mathsf{H_s}]$'s are commutative.

Now, since $\mathsf{T^G}(\mathfrak{A})=\mathsf{T^G}(\mathsf{E}^\mathsf{G}(\widehat{\mathfrak{A}}))$, it follows that $\mathsf{E}^\mathsf{G}(\widehat{\mathfrak{A}})_e$ is central in $\mathsf{E}^\mathsf{G}(\widehat{\mathfrak{A}})$, where $e$ is the neutral element of $\mathsf{G}$. Recall that $\mathsf{E}^\mathsf{G}(\widehat{\mathfrak{A}})_e=\widehat{\mathfrak{A}}_{(e,0)}\otimes_\mathbb{F}\mathsf{E}_0 +\widehat{\mathfrak{A}}_{(e,1)}\otimes_\mathbb{F}\mathsf{E}_1$. Observe that $\widehat{\mathfrak{A}}$ nilpotent implies $\mathsf{E}^\mathsf{G}(\widehat{\mathfrak{A}})$ nilpotent, and so the result follows. Suppose that $\widehat{\mathfrak{A}}$ is not nilpotent. Thus, $\mathfrak{B}_{(e,0)}\neq\{0\}$, otherwise $\widehat{\mathfrak{A}}_{(e,0)}=\mathsf{J}_{(e,0)}$, and so, since $\mathsf{J}_{(e,0)}$ is nilpotent, we must conclude that $\widehat{\mathfrak{A}}$ is nilpotent, which leads to a contradiction. It means that $\mathfrak{B}_{(e,0)}\otimes_\mathbb{F}\mathsf{E}_0\neq\{0\}$ is central in $\mathsf{E}^\mathsf{G}(\widehat{\mathfrak{A}})$. From this, it is not difficult to prove that $\mathfrak{B}_{(e,0)}$ is central in $\mathfrak{B}$, because $\mathsf{E}_0\subset\mathcal{Z}(\mathsf{E})$, and so by the item \textit{ii)} of Lemma \ref{3.04}, we deduce that  $k_1=\cdots=k_q=1$, and consequently, $\mathfrak{B}$ is equal to $\mathbb{F}^{\sigma_1}[\mathsf{H}_1] \times \cdots \times \mathbb{F}^{\sigma_q}[\mathsf{H}_q]$, which is commutative. 
\textbf{Claim:} 
$\mathsf{E}^\mathsf{G}(\mathfrak{B})=\mathfrak{B}_0\otimes_{\mathbb{F}}\mathsf{E}_{0}$ is a commutative algebra. Indeed, first, obviously $\mathfrak{B}_0\otimes_{\mathbb{F}}\mathsf{E}_{0}$ is commutative, since $\mathfrak{B}$ and $\mathsf{E}_{0}$ are commutative. Now, suppose that $\eta_{(e,1)}\in \mathbb{F}^{\sigma_s}[\mathsf{H_s}]$ for some $s=1,\dots,q$. Hence, for any $x_1,y_1\in\mathsf{E}_1$ such that $x_1 y_1\neq0$, we have that
\begin{equation*}
		\begin{split}[\eta_{(e,1)}\otimes x_1,\eta_{(e,1)}\otimes y_1]=2\sigma_s((e,1),(e,1))\eta_{(e,0)}\otimes x_1 y_1\neq0
		\end{split} \ ,
\end{equation*}
but $\eta_{(e,1)}\otimes x_1\in (\mathsf{E}^\mathsf{G}(\widehat{\mathfrak{A}}))_e$, and so $\eta_{(e,1)}\otimes x_1$ is central in $\mathsf{E}^\mathsf{G}(\widehat{\mathfrak{A}})$. This contradiction ensures that $\mathfrak{B}_{(e,1)}=\{0\}$. Analogously, suppose $\eta_{(\xi,1)}\in \mathbb{F}^{\sigma_s}[\mathsf{H_s}]$ for some $s=1,\dots,q$ and $\xi\in\mathsf{G}$ such that $(\xi,1)\in \mathsf{H}_s$. Note that $(\xi,1)^{\mathsf{o}(\xi)}=(e,1)$, because $p$ is odd, where $\mathsf{o}(\xi)$ is the order of $\xi$. Hence, for any $x_1,\dots, x_{\mathsf{o}(\xi)}\in\mathsf{E}_1$ such that $x_1\cdots x_{\mathsf{o}(\xi)}\neq0$, we have that 
\begin{equation*}
		\begin{split}
(\eta_{(\xi,1)}\otimes x_1)\cdots(\eta_{(\xi,1)}\otimes x_{\mathsf{o}(\xi)})=\lambda\eta_{(\xi,1)^{\mathsf{o}(\xi)}}\otimes x_1 \cdots x_{\mathsf{o}(\xi)}=\lambda\eta_{(e,1)}\otimes x_1 \cdots x_{\mathsf{o}(\xi)}\neq0
		\end{split} \ ,
\end{equation*}
where $\lambda=\sigma_s((\xi,1),(\xi,1))\sigma_s((\xi^2,1),(\xi,1))\cdots\sigma_s((\xi^{\mathsf{o}(\xi)-1},1),(\xi,1))$, but this contradicts the fact that $\mathfrak{B}_{(e,1)}=\{0\}$. We conclude that $\mathfrak{B}_1\otimes_{\mathbb{F}}\mathsf{E}_1=\{0\}$, and the claim is proven. Finally, using the equality $\left[\mathsf{E}^\mathsf{G}(\mathfrak{B}),\mathsf{E}^\mathsf{G}(\mathfrak{B}) \right]=\{0\}$, we have that
\begin{equation*}
		\begin{split}
\left[\mathsf{E}^\mathsf{G}(\widehat{\mathfrak{A}}),\mathsf{E}^\mathsf{G}(\widehat{\mathfrak{A}})\right] & \subseteq \left[\mathsf{E}^\mathsf{G}(\mathfrak{B}) + \mathsf{E}^\mathsf{G}(\mathsf{J}), \mathsf{E}^\mathsf{G}(\mathfrak{B}) + \mathsf{E}^\mathsf{G}(\mathsf{J}) \right] \\
		& \subseteq \left[\mathsf{E}^\mathsf{G}(\mathfrak{B}), \mathsf{E}^\mathsf{G}(\mathfrak{B}) \right] + \left[\mathsf{E}^\mathsf{G}(\mathfrak{B}) , \mathsf{E}^\mathsf{G}(\mathsf{J}) \right] + \left[\mathsf{E}^\mathsf{G}(\mathsf{J}), \mathsf{E}^\mathsf{G}(\mathfrak{B}) + \mathsf{E}^\mathsf{G}(\mathsf{J}) \right] \\
		& \subseteq \left[\mathsf{E}^\mathsf{G}(\mathfrak{B}) , \mathsf{E}^\mathsf{G}(\mathsf{J}) \right] + \left[\mathsf{E}^\mathsf{G}(\mathsf{J}), \mathsf{E}^\mathsf{G}(\widehat{\mathfrak{A}}) \right] \\
		& \subseteq \left[\mathsf{E}^\mathsf{G}(\mathsf{J}),\mathsf{E}^\mathsf{G}(\mathsf{J})\right] \subseteq \mathsf{E}^\mathsf{G}(\mathsf{J}) \ .
		\end{split}
\end{equation*}
Therefore, as $\mathsf{E}^\mathsf{G}(\mathsf{J})$ is nilpotent due to $\mathsf{J}$ being nilpotent, the result follows.
\end{proof}

%




\bibliographystyle{amsplain}
%

\end{document}